\pgfplotsset{compat=1.14}
\declaretheoremstyle[
spaceabove=\topsep, spacebelow=\topsep,
headfont=\normalfont\bfseries,
notefont=\mdseries, notebraces={(}{)},
bodyfont=\itshape,
postheadspace=\newline
]{break}
\declaretheoremstyle[
spaceabove=\topsep, spacebelow=\topsep,
headfont=\normalfont\bfseries,
notefont=\mdseries, notebraces={}{},
bodyfont=\itshape,
postheadspace=\newline
]{refbreak}
\declaretheorem[title=Theorem, style=plain, numberwithin=section]{thm}
\declaretheorem[title=Proposition, style=plain, numberlike=thm]{prop}
\declaretheorem[title=Lemma, style=plain, numberlike=thm]{lem}
\declaretheorem[title=Corollary, style=plain, numberlike=thm]{cor}
\declaretheorem[title=Assumption, style=plain, numberlike=thm]{ass}
\declaretheorem[title=Theorem, style=break, numberlike=thm]{thmlab}
\declaretheorem[title=Conjecture, style=break, numberlike=thm]{conj}
\declaretheorem[title=Definition, style=definition, numberlike=thm]{defn}
\declaretheorem[title=Remark, style=remark, numberlike=thm]{remark}
\declaretheorem[title=Example, style=remark, numberlike=thm]{ex}
\crefname{thm}{Theorem}{Theorems}
\crefname{prop}{Proposition}{Propositions}
\crefname{lem}{Lemma}{Lemmata}
\crefname{cor}{Corollary}{Corollaries}
\crefname{rem}{Reminder}{Reminders}
\crefname{defn}{Definition}{Definitions}
\crefname{thmlab}{Theorem}{Theorems}
\crefname{proplab}{Proposition}{Propositions}
\crefname{lemlab}{Lemma}{Lemmata}
\crefname{corlab}{Corollary}{Corollaries}
\crefname{remlab}{Reminder}{Reminders}
\crefname{conj}{Conjecture}{Conjectures}
\crefname{thmreflab}{Theorem}{Theorems}
\crefname{propreflab}{Proposition}{Propositions}
\crefname{lemreflab}{Lemma}{Lemmata}
\crefname{correflab}{Corollary}{Corollaries}
\crefname{remreflab}{Reminder}{Reminders}
\crefname{conjref}{Conjecture}{Conjectures}
\crefname{remark}{Remark}{Remarks}
\crefname{claim}{Claim}{Claims}
\crefname{ex}{Example}{Examples}
\crefname{section}{Section}{Sections}
\crefname{figure}{Figure}{Figures}
\crefname{equation}{}{}
\crefname{ass}{Assumption}{Assumptions}
\def\clap#1{\hbox to 0pt{\hss#1\hss}}
\def\underbracket{%
    \@ifnextchar[{\@underbracket}{\@underbracket [\@bracketheight]}%
}
\def\@underbracket[#1]{%
    \@ifnextchar[{\@under@bracket[#1]}{\@under@bracket[#1][0.4em]}%
}
\def\@under@bracket[#1][#2]#3{%\message {Underbracket: #1,#2,#3}
    \mathop{\vtop{\m@th \ialign {##\crcr $\hfil \displaystyle {#3}\hfil $%
    \crcr \noalign {\kern 3\p@ \nointerlineskip }\upbracketfill {#1}{#2}
    \crcr \noalign {\kern 3\p@ }}}}\limits}
\def\upbracketfill#1#2{$\m@th \setbox \z@ \hbox {$\braceld$}
    \edef\@bracketheight{\the\ht\z@}\bracketend{#1}{#2}
    \leaders \vrule \@height #1 \@depth \z@ \hfill
    \leaders \vrule \@height #1 \@depth \z@ \hfill \bracketend{#1}{#2}$}
\def\bracketend#1#2{\vrule height #2 width #1\relax}
\def\thmt@refnamewithcomma #1#2#3,#4,#5\@nil{%
  \@xa\def\csname\thmt@envname #1utorefname\endcsname{#3}%
  \ifcsname #2refname\endcsname
    \csname #2refname\expandafter\endcsname\expandafter{\thmt@envname}{#3}{#4}%
  \fi
}
\newcommand*\rel@kern[1]{\kern#1\dimexpr\macc@kerna}
\newcommand*\widebar[1]{%
  \begingroup
  \def\mathaccent##1##2{%
    \rel@kern{0.8}%
    \overline{\rel@kern{-0.8}\macc@nucleus\rel@kern{0.2}}%
    \rel@kern{-0.2}%
  }%
  \macc@depth\@ne
  \let\math@bgroup\@empty \let\math@egroup\macc@set@skewchar
  \mathsurround\z@ \frozen@everymath{\mathgroup\macc@group\relax}%
% Get bold math font in titles
% \makeatletter
% \g@addto@macro\bfseries{\boldmath}
% \makeatother
  \macc@set@skewchar\relax
  \let\mathaccentV\macc@nested@a
  \macc@nested@a\relax111{#1}%
  \endgroup
}
\newcommand{\subjclass}[2][1991]{%
  \let\@oldtitle\@title%
  \gdef\@title{\@oldtitle\footnotetext{#1 \emph{Mathematics subject classification.} #2}}%
}
\newcommand{\keywords}[1]{%
  \let\@@oldtitle\@title%
  \gdef\@title{\@@oldtitle\footnotetext{\emph{Key words and phrases.} #1.}}%
}
\newcommand{\extp}{\@ifnextchar^\@extp{\@extp^{\,}}}
   \def\@extp^#1{\mathop{\bigwedge\nolimits^{\!#1}}}
\DeclareMathOperator{\Hom}{Hom}
\DeclareMathOperator{\End}{End}
\DeclareMathOperator{\id}{Id}
\DeclareMathOperator{\ch}{ch}
\DeclareMathOperator{\Ex}{Ex}
\DeclareMathOperator{\df}{df}
\DeclareMathOperator{\grk}{grk}
\DeclareMathOperator{\BS}{BS}
\DeclarePairedDelimiter\floor{\lfloor}{\rfloor}
\newcommand{\Top}{\rule{0pt}{2.6ex}}       % Top strut
\newcommand{\defeq}{\ensuremath{\coloneqq}}
\newcommand{\cat}[1]{\ensuremath{\mathcal{#1}}}
\newcommand{\karalg}[1]{\ensuremath{\cat{K}ar(#1)}}
\newcommand{\Galg}[1]{\ensuremath{[#1]}}
\newcommand{\Z}{\ensuremath{\mathbb{Z}}}
\newcommand{\N}{\ensuremath{\mathbb{N}}}
\newcommand{\C}{\ensuremath{\mathbb{C}}}
\newcommand{\p}[1]{\ensuremath{\prescript{p}{}{#1}}}
\newcommand{\pre}[2]{\ensuremath{{}^{#1} #2}}
\newcommand{\heck}[1][]{\ensuremath{\mathcal{H}_{#1}}}
\newcommand{\std}[2][H]{\ensuremath{#1_{#2}}}
\newcommand{\kl}[2][H]{\ensuremath{\underline{#1}_{#2}}}
\DeclareDocumentCommand{\pcan}{O{p} O{H} m}{\ensuremath{\prescript{#1}{}{\underline{#2}}_{#3}}}
\newcommand{\desc}[1]{\ensuremath{\mathcal{#1}}}
\newcommand{\expr}[1]{\ensuremath{\underline{#1}}}
\newcommand{\mult}[1]{\ensuremath{\expr{#1}_{\bullet}}}
\newcommand{\cle}[2][p]{\ensuremath{\, \overset{#1}{\underset{#2}{\leqslant}} \,}}
\newcommand{\cge}[2][p]{\ensuremath{\, \overset{#1}{\underset{#2}{\geqslant}} \,}}
\newcommand{\multLe}[2][p]{\ensuremath{\, \overset{#1}{\underset{#2}{\leftarrow}} \,}}
\newcommand{\multGe}[2][p]{\ensuremath{\, \overset{#1}{\underset{#2}{\rightarrow}} \,}}
\newcommand{\clt}[2][p]{\ensuremath{\, \overset{#1}{\underset{#2}{<}} \,}}
\newcommand{\ceq}[2][p]{\ensuremath{\, \overset{#1}{\underset{#2}{\sim}} \,}}
\newcommand{\rt}[1][]{\ensuremath{\alpha_{#1}}}
\newcommand{\cort}[1][]{\ensuremath{\alpha_{#1}^{\vee}}}
\newcommand{\charlat}{\ensuremath{X}}
\newcommand{\cocharlat}{\ensuremath{X^{\vee}}}
\newcommand{\Wid}{\ensuremath{e}}
\newcommand{\HC}[1][]{\ensuremath{\mathbf{H}^{#1}}}
\newcommand{\HCnl}[1]{\ensuremath{\HC[\nless #1]}}
\newcommand{\Homnl}[2][]{
   \ifthenelse{ \equal{#1}{} } { \ensuremath{\Hom_{\nless #2}} } 
   { \ensuremath{\Hom_{\nless #2, #1}} } }
\newcommand{\Endnl}[2][]{
   \ifthenelse{ \equal{#1}{} } { \ensuremath{\End_{\nless #2}} } 
   { \ensuremath{\End_{\nless #2, #1}} } }
\newcommand{\Address}{
  \bigskip{\footnotesize
  %\textsc{Max Planck Institute for Mathematics, Vivatsgasse 7, 53111 Bonn}\par\nopagebreak
  \textsc{Universit\'{e} Clermont Auvergne, CNRS, LMBP, F-63000 Clermont-Ferrand, France}\par\nopagebreak
  \textit{E-mail address}, Lars~Thorge~Jensen: \texttt{lars\_thorge.jensen@uca.fr}
}}
\tikzset{%
  % Style for drawing Dynkin diagrams
  DynNode/.style={circle, inner sep=2pt, draw=black, fill=white},
  Greater/.style={pos=0.65, inner sep=0mm, outer sep=0mm},
  % Style for highlighting submatrices in bigger matrices
  highlight/.style={rectangle,rounded corners,fill=red!15,draw=red,
    fill opacity=0.5,thick},
  % Styles for drawing root systems
  root/.style={draw, color=black, thick, ->},
  plane/.style={draw, color=black, very thin},
  origin/.style={fill, color=black},
  sline/.style={color=Red, thick},
  tline/.style={color=NavyBlue, thick},
  uline/.style={color=Goldenrod, thick},
  % Styles for glublar representations of a 2-morphism in a 2-category
  bendBelow/.style={bend left=70, looseness=2},
  bendAbove/.style={bend right=70, looseness=2},
  object/.style={circle, fill, inner sep=1.5pt, outer sep=0mm},
  labelling/.style={outer sep=0mm, inner sep=0mm},
  1morph/.style={->, shorten >= 0.5pt, >=stealth'},
  2morph/.style={-implies,double,double equal sign distance,
                 shorten >=2pt, shorten <=3pt},
  % Styles for Diagrammatics of Soergel bimodules
  spot/.style={color=black, thin, dashed},
  s/.style={color=Red},
  t/.style={color=NavyBlue},
  u/.style={color=Goldenrod},
  line/.style={draw, line width=2pt},
  dot/.style={fill, thin},
  % Styles for the rank 2 affine Weyl group representation
  sph/.style={fill, color=black!20, opacity=0.5},
  squig/.style={decoration=snake, decorate, ->},
  root/.style={draw, color=black, line width=2pt, ->},
  %dot/.style={color=#1, thin, fill},
  %dot/.default=red
  myptr/.style={decoration={markings,mark=at position 1 with %
    {\arrow[scale=3,>=stealth]{>}}},postaction={decorate}},
  % Styles for Diagrammatics of Singular Soergel bimodules
  on each segment/.style={
    decorate,
    decoration={
      show path construction,
      moveto code={},
      lineto code={
        \path [#1]
        (\tikzinputsegmentfirst) -- (\tikzinputsegmentlast);
      },
      curveto code={
        \path [#1] (\tikzinputsegmentfirst)
        .. controls
        (\tikzinputsegmentsupporta) and (\tikzinputsegmentsupportb)
        ..
        (\tikzinputsegmentlast);
      },
      closepath code={
        \path [#1]
        (\tikzinputsegmentfirst) -- (\tikzinputsegmentlast);
      },
    },
  },
  mid arrow/.style={postaction={decorate,decoration={
        markings,
        mark=at position .5 with {\arrow[#1]{stealth}}
      }}},
  sline/.style={draw, line width=1pt, postaction={on each segment={mid arrow=black}}},
  sregion/.style={fill, opacity=0.2},
  st/.style={fill=Fuchsia},
  su/.style={fill=YellowOrange},
  tu/.style={fill=ForestGreen},
  % Styles for labelling cells in affine rank 2
  clabel/.style={fill=none, red}, % draw, inner sep=0.5pt, outer sep=0.5pt, minimum size=0pt, circle } 
  str/.style={<->}
}
\newcommand{\BigFig}[1]{\parbox{12pt}{\Huge #1}}
\newcommand{\BigZero}{\BigFig{0}}
\newcommand{\BigAst}{\BigFig{$\ast$}}
\begin{document}

% First declare some Variables needed for TikZ
\def \dist {0.3cm}
\def \xdist {0.4cm}
\def \ydist {0.5cm}
\def \circSize {2.5pt} % Size of dots in Soergel diagrams
\def \armLen {0.4cm} % Length of the arms for units in Soergel diagrams
\def \edgeShift {0.5mm} % Vertical edge shift of multiple edges in Dynkin diagrams
\def \wingLen {0.3cm} % Length of the wings of > or < in non-simply laced Dynkin diagrams
% Singular Diagrammatics
\def \sxdist {0.4cm}

%\titlehead{Kopf }
%\subject{Klassifizierung }
\title{The ABC of p-Cells}
\author{Lars Thorge Jensen}
%\subjclass[2000]{Primary 20C08, 20B05}
%\keywords{}
\date{} % This leads to the omission of the date!
%\thanks{I want to thank my supervisor Geordie Williamson. }

\maketitle

\begin{abstract}  
  Parallel to the very rich theory of Kazhdan-Lusztig cells in characteristic $0$, 
  we try to build a similar theory in positive characteristic. We study cells
  with respect to the $p$-canonical basis of the Hecke algebra of a crystallographic
  Coxeter system (see \cite{JW}). Our main technical tool are the  star-operations 
  introduced by Kazhdan-Lusztig in \cite{KL} which have interesting numerical
  consequences for the $p$-canonical basis. As an application, we explicitely
  describe $p$-cells in finite type $A$ (i.e. for symmetric groups) using 
  the Robinson-Schensted correspondence. Moreover, we show that Kazhdan-Lusztig 
  cells in finite types $B$ and $C$ decompose into $p$-cells for $p > 2$.
\end{abstract}

\tableofcontents

\vspace{1cm}

\section{Introduction}

The Hecke algebra of a crystallographic Coxeter system admits several 
geometric or algebraic categorifications (see \cite{KL, EW2}). 
In these cases the resulting canonical basis gives rise to the 
famous Kazhdan-Lusztig basis (see \cite{KL2, EW1}) in the characteristic
$0$ setting. The original motivation for the Kazhdan-Lusztig basis 
was to explicitly construct representations of the Hecke algebra 
(see \cite[comments on {[37]}]{LuPapers}). This naturally led 
Kazhdan and Lusztig to study cells in the Hecke algebra with respect 
to the Kazhdan-Lusztig basis and the corresponding cell modules.

In \cite{JW} the $p$-canonical basis for the Hecke algebra of a crystallographic 
Coxeter system was introduced. It should be thought of as a positive characteristic 
analogue of the Kazhdan-Lusztig basis. The $p$-canonical basis shares strong 
positivity properties with the Kazhdan-Lusztig basis (similar to the ones 
described by the Kazhdan-Lusztig positivity conjectures), but it loses many 
of its combinatorial properties. For this reason, it is much harder to compute 
the $p$-canonical basis which is only known in small examples.

Even without explicit knowledge of the $p$-canonical basis, one may obtain a first 
approximate description of the multiplicative structure by studying the 
left, right or two-sided cell preorder with respect to the $p$-canonical basis.
Replacing the Kazhdan-Lusztig basis by the $p$-canonical basis in the definition
of the left (resp. right or two-sided) cells leads to the notion of left (resp. 
right or two-sided) $p$-cells. In this paper, we introduce $p$-cell theory which
we hope will eventually turn out to be as influential as Kazhdan-Lusztig cell
theory. It is current work of progress to describe $p$-cells in affine Weyl
groups of small rank. For affine Weyl groups $p$-cells are closely related to
tensor ideals of tilting modules (see for example \cite{AnCells, AHRHumphreysConj,
AHRCellConjs}).

The first properties of $p$-cells that we prove in \cref{secpCellTheory} are 
the following: Left and right $p$-cells are related by taking inverses 
(see \cref{corLeftRightEq}), just like for Kazhdan-Lusztig cells. The set 
of elements with a fixed left descent set decomposes into right $p$-cells 
(see \cref{lemCleDesc}). We also study which automorphisms of the Hecke 
algebra are well-behaved with respect to the $p$-canonical basis (see 
\cref{propCoxAutom}). The most important result of this section is a 
certain compatibility of $p$-cells with parabolic subgroups:
We show that any right $p$-cell preorder relation in a finite, standard 
parabolic subgroup $W_I$ induces right $p$-cell preorder relations in 
each right $W_I$-coset (see \cref{thmParaCombPCells}).

In \cite[Theorem 1.4]{KL}, Kazhdan and Lusztig show that in type $A$ the cell modules give 
the irreducible modules of the Hecke algebra $\heck$ for generic parameter $v$.
In their proof, they introduce the Kazhdan-Lusztig star-operations, generalizing
the (dual) Knuth operations for symmetric groups to pairs of simple reflections
$s$ and $t$ in a Coxeter group with $st$ of order $3$. The study of the consequences
of the star-operations for the structure coefficients leads to an explicit
description of the Kazhdan-Lusztig cells in symmetric groups (see \cite[\S5]{KL}).
The Kazhdan-Lusztig cells in type $A$ can be characterized via the Robinson-Schensted
correspondence (see \cite{ArRSLCells}), which gives a bijection $w \mapsto (P(w), Q(w))$
between the symmetric group $S_n$ and pairs of standard tableaux of the same shape
with $n$ boxes: The Kazhdan-Lusztig right cell of a given element $w \in S_n$
is given by the set of elements in $S_n$ that have the same $P$-symbol as $w$
under the Robinson-Schensted correspondence.

In an attempt to explicitly describe $p$-cells for symmetric groups, we
studied the consequences of the Kazhdan-Lusztig star-operations for the 
$p$-canonical basis. We deduce many interesting relations for the structure 
coefficients of the $p$-canonical basis and for the base change coefficients 
between the $p$-canonical and the Kazhdan-Lusztig basis. The reader is
invited to compare these with the results in \cite[\S4]{KL} 
and \cite[\S10.4]{LuCellsInAffWeylGrpsI}. If a $p$-canonical basis element 
differs from the corresponding Kazhdan-Lusztig basis element, then these 
identities often allow to deduce the non-triviality of other $p$-canonical 
basis elements (see \cite[Remark 5.2. (9)]{LuWTiltCharsSL3} for an example). 
Another consequence is that $p$-cells coincide with 
Kazhdan-Lusztig cells in symmetric groups for all primes $p$ (see 
\cref{thmPCellsTypeA}) and in particular that they are independent of $p$. This is 
particularly interesting since the $p$-canonical basis does differ from the 
Kazhdan-Lusztig basis for many primes $p$ (see \cite{WTorsion}).

Thus, in type $A$ the $p$-canonical basis of each cell module gives after 
extending scalars to $\C$ and specializing $v$ to $1$ a basis of the 
corresponding complex irreducible representation of the symmetric group. 
Letting $p$ vary, we obtain a very interesting family of bases that merits further study.
The relation between Specht modules, the Kazhdan-Lusztig cell representations, 
and their corresponding natural bases was further studied for the Hecke algebra as well as 
for the group algebra of symmetric groups in \cite{MaWgraphs, GMCellsTypeA, MPSpechtModsTypeA}.

Supported by the results in finite type $A$, one may hope that Kazhdan-Lusztig
cells always decompose into $p$-cells. Unfortunately, this is not the case
as we show in \cref{secDecompCounterex}. However, we believe that 
the corresponding statement may still be true for $p$ good for the
corresponding algebraic group. In \cref{secDecompCriterion}, we develop
a simple criterion when Kazhdan-Lusztig right cells decompose into right $p$-cells,
which reduces the question to minimal elements with respect to the weak right
Bruhat order in each cell.

In a series of papers \cite{GaPrimIdealsI, GaPrimIdealsII, GaPrimIdealsIII, GaPrimIdealsIV}, 
Garfinkle generalizes the Robinson-Schensted correspondence to types $B$, $C$ and $D$. 
She develops combinatorial algorithms to associate to a Weyl group element $w$ 
a pair $(T_L(w), T_R(w))$ of standard domino tableaux of the same shape from which 
$w$ can be reconstructed. (Note that the definition of a standard domino tableau 
depends on the type.) The main difference to the situation in type $A$ is that 
the partition of the Weyl group into sets with the same left domino tableau 
is finer than the partition into Kazhdan-Lusztig left cells (see 
\cite[\S3]{McGCellsAndDominoTableaux}). For this reason, Garfinkle groups 
the set of dominoes in a tableau into \emph{cycles} and classifies them as 
``open'' or ``closed''. For each open cycle, she defines an involutive algorithm
called ``moving a tableau through an open cycle'' that changes only the positions
of the dominoes in the open cycle. Based on this, she defines an equivalence relation
on the set of standard domino tableaux by declaring two to be equivalent if one
can be obtained from the other by moving through a particular subset of 
open cycles. (In type $B/C$ only the open cycle containing the domino with label
$1$ is excluded.) One of the main results is that two elements in a Weyl 
group of type $B/C$ lie in the same Kazhdan-Lusztig left cell if and 
only if their corresponding left standard domino tableaux are equivalent 
(see \cite[Corollary 3.5.6. and Theorem 3.5.9.]{GaPrimIdealsIII}).
Unpublished work of Garfinkle (see \cite{GaPrimIdealsIV}), Pietraho and McGovern
extends this result to type $D$.
%Garfinkle also announced an extension of this result to type $D$ (see 
%\cite{GPMCGPrimIdealsIV}) which has not yet appeared in print. 
Moreover, Garfinkle
shows that a generalization of Vogan's $\tau$-invariant gives a complete
invariant for Kazhdan-Lusztig left cells. Based on our results on the Kazhdan-Lusztig
star operations, we can show that the equivalence classes with respect to this
generalized $\tau$-invariant give a refinement of the left $p$-cells under small
assumptions on $p$. This implies, that Kazhdan-Lusztig left cells decompose
into left $p$-cells in finite types $B$ and $C$ for $p > 2$.

\subsection{Structure of the Paper}

\begin{description}
   \item[\Cref{secBack}] We introduce notation and recall important results about
         the Hecke algebra and the $p$-canonical basis.
   \item[\Cref{secpCellTheory}] We define $p$-cells and prove some of their elementary
         properties. The most important results are the compatibility of $p$-cells with
         parabolic subgroups and a criterion for Kazhdan-Lusztig cells to decompose into $p$-cells.
         We also give interesting examples of $p$-cells and state a conjecture
         resulting from extensive computer calculations.
   \item[\Cref{secStarOps}] We introduce the Kazhdan-Lusztig star operations.
         Then we study in detail consequences for base change and structure coefficients
         of the $p$-canonical basis and for $p$-cells. After introducing Vogan's 
         generalized $\tau$-invariant, we show that left $p$-cells give a refinement
         of the $\tau$-equivalence classes under small assumptions on $p$. In the end,
         we show that $p$-cells in finite type $A$ are given by the Robinson-Schensted
         correspondence.
\end{description}

\subsection{Acknowledgements}
Since this paper is part of the author's PhD-thesis, I would like to thank 
my supervisor Geordie Williamson - his constant support, his inspiring 
mathematical vision and his enthusiasm were crucial for the success of my 
PhD project. 
%I would also like to thank Jens Niklas Eberhardt from Freiburg and Leonardo 
%Patimo for countless hours of interesting mathematical discussions (on Skype). 
Moreover, I am grateful to the Max-Planck Institute for Mathematics for the perfect working conditions and the financial support for my research visit in Sydney.
The project was started at the School of Mathematics and Statistics at the University
of Sydney and finished at the Max-Planck Institute for Mathematics in Bonn.
I would also like to thank Simon Riche for detailed comments and Monty McGovern for
providing a preliminary version of \cite{GaPrimIdealsIV} and of his joint work 
in progress with Thomas Pietraho.

%\newpage
\section{Background}
\label{secBack}

\subsection{Coxeter Systems and Generalized Cartan Matrices}
\label{secCox}
Let $S$ be a finite set and $(m_{s, t})_{s, t \in S}$ be a matrix with entries in
$\N \cup \{\infty\}$ such that $m_{s,s} = 1$ and $ m_{s, t} = m_{t, s} \geqslant 2$ 
for all $s \neq t \in S$. Denote by $W$ the group generated by $S$ subject to the relations 
$(st)^{m_{s, t}} = 1$ for $s, t \in S$ with $m_{s, t} < \infty$. We say that $(W, S)$
is a \emph{Coxeter system} and $W$ is a \emph{Coxeter group}. The Coxeter group $W$
comes equipped with a length function $l: W \rightarrow \N$ and the Bruhat order
$\leqslant$ (see \cite{Hum} for more details). A Coxeter system $(W, S)$ is 
called \emph{crystallographic} if $m_{s, t} \in \{2, 3, 4, 6, \infty\}$ for 
all $s \neq t \in S$. We denote the identity of $W$ by $\Wid$. For $w \in W$ we
define its \emph{left descent set} via
\[ \desc{L}(w) \defeq \{ s \in S \; \vert \; l(sw) < l(w) \} \text{.} \]
The \emph{right descent set} of $w$ is given by $\desc{R}(w) \defeq \desc{L}(w^{-1})$.

Define an \emph{expression} to be a finite sequence of elements in $S$. We denote by
\[ \Ex(S) \defeq \{\varnothing\} \cup \bigcup_{i \in \N \setminus \{0\}} \underbrace{S \times 
   \dots \times S}_{i\text{-times}} \]
the set of all expressions in $S$. For an expression $\expr{w} = (s_1, s_2, \dots, s_n)$
denote its \emph{length} by $l(\expr{w}) = n$. For two expressions $\expr{x}$ and 
$\expr{y}$ in $S$, we write $\expr{x}^{\frown}\expr{y}$ for their concatenation.
The multiplication gives a canonical map $\Ex(S) \rightarrow W$, 
$\expr{w} \mapsto \mult{w}$. An expression $\expr{w}$ in $S$
is called \emph{reduced} if $l(\expr{w}) = l(\mult{w})$. For an expression 
$\expr{w} = (s_1, s_2, \dots, s_n)$ in $S$ a \emph{subexpression} of $\expr{w}$ is
a sequence $\expr{w}^{\expr{e}} = (s_1^{e_1}, s_2^{e_2}, \dots, s_n^{e_n})$ 
where $e_i \in \{0, 1\}$ for all $i$. The sequence $\expr{e} = (e_1, e_2, \dots, e_k)$ 
is called the \emph{associated $01$-sequence}. We usually decorate $\expr{e}$ as follows: 
For $1 \leqslant k \leqslant n$ denote by $\expr{w}_{\leqslant k} \defeq (s_1, s_2, \dots, s_k)$ 
the first $k$ terms and set $w_k \defeq 
(\expr{w}_{\leqslant k}^{\expr{e}_{\leqslant k}})_{\bullet}$. Assign to $e_i$ a 
decoration $d_i \in \{U, D\}$ where $U$ stands for \emph{Up} and $D$ for 
\emph{Down} as follows:
\[ d_1 \defeq U \text{ and } d_i \defeq 
   \begin{cases}
      U & \text{if } w_{i-1} s_i > w_{i-1} \text{,} \\
      D & \text{if } w_{i-1} s_i < w_{i-1} %\text{,}
   \end{cases}
   \text{ for } 2 \leqslant i \leqslant n\text{.}
\]
We often write the decorated sequence as $(d_1 e_1, d_2 e_2, \dots, d_n e_n)$.
The sequence of elements $e$, $w_1$, $w_2$, $\dots$, $w_n$ is called the \emph{Bruhat stroll}
associated to $\expr{w}^{\expr{e}}$. The \emph{defect} of $\expr{e}$ is defined to be
\[ \df(e) \defeq \vert \{ i \; \vert \; d_i e_i = U0 \}\vert - 
            \vert \{ i \; \vert \; d_i e_i = D0 \} \vert \text{.} \]
% The set of subexpressions of $\expr{w}$ comes equipped with a partial order,
% called the \emph{path dominance order} (see \cite[\S2.4]{EW2} for the definition).
\begin{ex}
   To illustrate the definitions, consider for example the case $S = \{s, t\}$ 
   with $m_{s,t} = m_{t, s} = 3$ (i.e. type $A_2$). The reduced expression $(s, t, s)$
   admits two decorated $01$-sequences expressing $s$:
   \begin{align*}
      &(U1, U0, D0) \quad \text{of defect } 0\\
      &(U0, U0, U1) \quad \text{of defect } 2\\
   \end{align*}
\end{ex}

From now on, fix a generalized Cartan matrix $A = (a_{i,j})_{i, j \in J}$ (see 
\cite[\S1.1]{TiGrpKM}). Let $(J, \charlat, \{ \rt[i] : i \in J\}, 
\{ \cort[i] : i \in J \})$ be an associated Kac-Moody root datum (see 
\cite[\S1.2]{TiGrpKM} for the definition). Then $X$ is a finitely generated 
free abelian group, and for $i \in J$ we have elements $\rt[i]$ and $\cort[i]$ 
of $X$ and $\cocharlat = \Hom_{\Z}(\charlat, \Z)$ respectively that satisfy
$a_{i,j} = \cort[i](\rt[j])$ for all $i, j \in J$.

To $A$ we associate a crystallographic Coxeter system $(W, S)$ as follows:
Choose a set of simple reflections $S$ of cardinality $\lvert J \rvert$ and
fix a bijection $S \overset{\sim}{\rightarrow} J$, $s \mapsto i_s$. For $s \ne t \in S$
we define $m_{s, t}$ to be $2$, $3$, $4$, $6$, or $\infty$ if $a_{i_s, j_s} a_{j_s, i_s}$
is $0$, $1$, $2$, $3$, or $\geqslant 4$ respectively.

Fix a commutative ring $k$. In both cases, ${}^k V \defeq \cocharlat \otimes_{\Z} k$ 
yields a balanced, potentially non-faithful realization of the Coxeter system over $k$. 
Set ${}^k V^{\ast} \defeq \Hom_{k}({}^k V, k)$ and note that ${}^k V^{\ast}$ is 
isomorphic to $\charlat \otimes_{\Z} k$. A realization obtained in this way is called
a \emph{Cartan realization} (see \cite[\S10.1]{AMRWFreeMonodromicMixedTiltSheaves}).
Throughout, we will assume our realization to satisfy:
% The realization consists of a  free, finite rank $k$-module $\mathfrak{h}$, 
% together with subsets $\{ \cort[s] \; \vert \; s \in S \}  \subset \mathfrak{h}$ and 
% $\{ \rt[s] \; \vert \; s \in S \} \subset \mathfrak{h}^{\ast} = \Hom_{k}(\mathfrak{h}, k)$ 
% called \emph{simple coroots} and \emph{simple roots} respectively such that 
% $\langle \cort[s], \rt[s] \rangle = 2$ for all $s \in S$ and $W$ acts on 
% $\mathfrak{h}$ via 
% \[ s(v) = v - \langle v, \rt[s] \rangle \cort[s]\]
% for $s \in S$ and $v \in \mathfrak{h}$. The matrix 
% $(\langle \cort[t], \rt[s] \rangle)_{s, t \in S}$ is the \emph{Cartan matrix}
% associated to this realization. 
% 
% Note that \emph{minimal} realizations (i.e. those for which the set 
% $\{ \cort[s] \; \vert \; s \in S \}$ forms a basis of $\mathfrak{h}$)
% can be reconstructed from its Cartan matrix.
% 
% For us the main examples will be crystallographic Coxeter systems for which the 
% geometric representation can be defined over $\Z$ and extended to $k$ to give 
% a realization of $(W, S)$ over any commutative ring $k$. For the finite Weyl groups 
% the geometric representation can be reconstructed from the corresponding Cartan 
% matrix and we can equivalently use a Cartan matrix as input datum.
% 
% \noindent
\begin{ass}[Demazure Surjectivity]
   \label{assDemazureSurj}
   The maps $\rt[s]: {}^k V \rightarrow k$ and $\cort[s]: {}^k V^{\ast} 
   \rightarrow k$ are surjective for all $s \in S$.
\end{ass}
\noindent This is automatically satisfied if $2$ is invertible in $k$ or if the
Coxeter system $(W, S)$ is of simply-laced type and of rank $\lvert S \rvert \geqslant 2$.
% Note that one can always enlarge ${}^k V$ to ensure Demazure surjectivity 
% by suitably introducing new simple coroots.

We denote by $R = S({}^k V^{\ast})$ the symmetric algebra of ${}^k V^{\ast}$
over $k$ and view it as a graded ring with ${}^k V^{\ast}$ in degree $2$. 
%By choosing $\cocharlat$ instead of $\charlat$ in the definition of $V$, we ensure 
%that $R$ is a polynomial ring in the roots. We may also canonically identify $R$ with 
%$H^{\ast}_T(\pt; k)$, the $T$-equivariant cohomology of a point 
%(see \cite[\S 1 Example 2)]{BEqCoh}).
Given a graded $R$-bimodule $B = \bigoplus_{i\in \Z} B^i$, we denote by 
$B(1)$ the shifted bimodule with $B(1)^i = B^{i+1}$.

% The action of $W$ on ${}^k V$ induces an action on $R$ by functoriality. For any $s \in S$ 
% we define the \emph{Demazure operator} $\partial_s: R \rightarrow R(-2)$ via
% \[ \partial_s(f) \defeq \frac{f - s(f)}{\rt[s]} \]
% where $(1)$ denotes the grading shift down by one: Given a graded $R$-bimodule
% $B = \bigoplus_{i\in \Z} B^i$, we denote by $B(1)$ the shifted bimodule with
% $B(1)^i = B^{i+1}$. Observe that $\partial_s$ is a well-defined graded 
% $R^s$-bimodule homomorphism (see \cite[\S 3.3]{EW2} for more details).

\subsection{The Hecke Algebra}
\label{secHeck}
The Hecke algebra is the free $\Z[v, v^{-1}]$-algebra with $\{ \std{w} \; \vert \; 
w \in W \}$ as basis, called the \emph{standard basis}, and multiplication determined by:
\begin{alignat*}{2}
   \std{s}^2 &= (v^{-1} - v) \std{s} + 1  \qquad && \text{for all } s \in S \text{,} \\
   %\underbrace{\std{s} \std{t} \std{s} \dots}_{m_{s, t} \text{ terms}} &= 
   %   \underbrace{\std{t} \std{s} \std{t} \dots}_{m_{s,t} \text{ terms}} 
   %   && \text{for all } s \neq t \in S \text{.}
   \std{x} \std{y} &= \std{xy} && \text{if } l(x) + l(y) = l(xy) \text{.}
\end{alignat*}

There is a unique $\Z$-linear involution $\widebar{(-)}$ on $\heck$ satisfying
$\widebar{v} = v^{-1}$ and $\widebar{\std{x}} = \std{x^{-1}}^{-1}$. The Kazhdan-Lusztig 
basis element $\kl{x}$ is the unique element in $\std{x} + \sum_{y < x} v\Z[v] H_y$ 
that is invariant under $\widebar{(-)}$. This is Soergel's normalization from 
\cite{S2} of a basis introduced originally in \cite{KL}. For a sequence 
$\expr{w} = (s_1, s_2, \dots, s_n)$ in $S$, we write $\kl{\expr{w}}$ for the element
$\kl{s_1}\kl{s_2} \dots \kl{s_n}$.  After fixing a reduced expression $\expr{w}$ of every
element $w \in W$, the set $ \{ \kl{\expr{w}} \; \vert \; w \in W\}$
gives a basis of $\heck$, called the \emph{Bott-Samelson basis}.

Let $\iota$ be the $\Z[v, v^{-1}]$-linear anti-involution on $\heck$ satisfying 
$\iota(\std{s}) = \std{s}$ for $s \in S$ and thus $\iota(\std{x}) = \std{x^{-1}}$.

\subsection{The Diagrammatic Category of Soergel Bimodules}

In this section, we introduce the diagrammatic category of Soergel bimodules. 
The main reference for this is \cite{EW2} (see also \cite{E3} in the dihedral 
case and \cite{EKh} in type $A$).

Let $\BS$ be the diagrammatic category of Bott-Samelson bimodules
as introduced in \cite[\S2.3]{JW}. It is a diagrammatic, strict monoidal 
category enriched over $\Z$-graded left $R$-modules.

Let $\HC$ be the Karoubian envelope of the graded version of the additive closure
of $\BS$, in symbols $\HC = \karalg{\BS}$. We call $\HC$ the 
\emph{diagrammatic category of Soergel bimodules}. In other words, in the passage 
from $\BS$ to $\HC$ we first allow direct sums and grading shifts (restricting 
to degree preserving homomorphisms) and then the taking of direct summands. The 
following properties can be found in \cite[Lemma 6.24, Theorem 6.25 and 
Corollary 6.26]{EW2}:

\begin{thmlab}[Properties of $\HC$]
   \label{thmDiagProps}
   Let $k$ be a complete, local, integral domain (e.g. a field or the $p$-adic 
   integers $\Z_p$).
   \begin{enumerate}
      \item $\HC$ is a Krull-Schmidt category.
      \item For all $w \in W$ there exists a unique indecomposable object $\pre{k}{B}_w 
            \in \HC$ which is a direct summand of $\expr{w}$ for any reduced 
            expression $\expr{w}$ of $w$ and which is not isomorphic to a grading 
            shift of any direct summand of any expression $\expr{v}$ for $v < w$. 
            In particular, the object $\pre{k}{B}_w$ does not depend up to isomorphism 
            on the reduced expression $\expr{w}$ of $w$.
      \item The set $\{ \pre{k}{B}_w \; \vert \; w \in W\}$ gives a complete set of 
            representatives of the isomorphism classes of indecomposable objects 
            in $\HC$ up to grading shift.
      \item There exists a unique isomorphism of $\Z[v, v^{-1}]$-algebras
            \[ \ch: \Galg{\HC} \longrightarrow \heck \]
            sending $[\pre{k}{B}_s]$ to $\kl{s}$ for all $s \in S$, where $\Galg{\HC}$ 
            denotes the split Grothendieck group of $\HC$. \textup{(}We view $\Galg{\HC}$ as a
            $\Z[v, v^{-1}]$-algebra as follows: the monoidal structure
            on $\HC$ induces a unital, associative multiplication and $v$
            acts via $v[B] \defeq [B(1)]$ for an object $B$ of $\HC$.\textup{)}
   \end{enumerate}
\end{thmlab}

It should be noted that we do not have a diagrammatic presentation of $\HC$ 
as determining the idempotents in $\BS$ is usually extremely difficult.

\subsection{The \texorpdfstring{$p$}{p}-canonical Basis}

In this section, we recall the definition of the $p$-canonical basis
and its elementary properties (see \cite{JW}). Instead of a based root
datum, we use a generalized Cartan matrix as input, but all the results
from \cite{JW} still hold in this slightly more general setting. Let $k$ 
be a field of characteristic $p \geqslant 0$. Note that the $p$-canonical basis
depends on $p$, but not on the explicit choice of $k$. Since the numerical 
input for the algorithm to calculate the $p$-canonical basis (as described 
in \cite[\S3]{JW}) reduces to the generalized Cartan matrix $A$, the $p$-canonical
basis does not depend on the choice of Kac-Moody root datum associated to $A$.

\begin{defn}
   Define $\pcan{w} = \ch([\pre{k}{B}_w])$ for all $w \in W$ where $\ch: \Galg{\HC} 
   \overset{\cong}{\longrightarrow} \heck$ is the isomorphism of $\Z[v, v^{-1}]$-algebras
   introduced earlier.
\end{defn}

We will frequently use the following elementary properties
of the $p$-canonical basis which can be found in \cite[Proposition 4.2]{JW}
unless stated otherwise:

\begin{prop}
   \label{propPCanProps}
   For all $x, y \in W$ we have:
   \begin{enumerate}
      \item $\widebar{\pcan{x}} = \pcan{x}$, i.e. $\pcan{x}$ is self-dual,
      \item $\pcan{x} = \std{x} + \sum_{y < x} \p{h}_{y, x} \std{y}$ 
            with $\p{h}_{y, x} \in \Z_{\geqslant 0}[v, v^{-1}]$,
      \item $\pcan{x} = \kl{x} + \sum_{y < x} \p{m}_{y, x} \kl{y}$
            with self-dual $\p{m}_{y, x} \in \Z_{\geqslant 0}[v, v^{-1}]$,
      \item $\iota(\pcan{x}) = \pcan{x^{-1}}$ and thus in particular 
            $\p{m}_{y, x} = \p{m}_{y^{-1}, x^{-1}}$ as well as
            $\pre{p}{h}_{y, x} = \pre{p}{h}_{y^{-1}, x^{-1}}$,
      \item $\p{m}_{y, x} = 0$ unless $\desc{L}(x) \subseteq \desc{L}(y)$
            and $\desc{R}(x) \subseteq \desc{R}(y)$,
      \item $\pcan{x} \pcan{y} = \sum_{z \in W} \p{\mu}^{z}_{x, y} \pcan{z}$
            with self-dual $\p{\mu}^{z}_{x, y} \in \Z_{\geqslant 0}[v, v^{-1}]$,
      \item $\pcan{x} = \kl{x}$ for $p = 0$ (see \cite{EW1}) and $p \gg 0$ 
            (i.e. there are only finitely many primes for which $\pcan{x} \neq \kl{x}$).
   \end{enumerate}
\end{prop}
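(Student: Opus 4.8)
The plan is to deduce every assertion from the single structural input of \cref{thmDiagProps}: the category $\HC$ is Krull--Schmidt with indecomposables $\pre{k}{B}_w$, and $\ch\colon\Galg{\HC}\to\heck$ is an isomorphism of $\Z[v,v^{-1}]$-algebras sending $[\pre{k}{B}_w]$ to $\pcan{w}$. I would exploit four pieces of structure on $\HC$ and transport them through $\ch$: the monoidal product, a contravariant duality $\D$ that fixes each $\pre{k}{B}_w$ and induces $\widebar{(-)}$ on $\heck$, a contravariant monoidal ``horizontal flip'' that sends $\pre{k}{B}_w$ to $\pre{k}{B}_{w^{-1}}$ and induces $\iota$, and --- crucially for the positivity of the base-change coefficients --- an integral $\rO$-form with $\rO=\Z_p$, which is legitimate because \cref{thmDiagProps} is stated over any complete local domain.

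The formal statements come first. For (vi), monoidality gives $\pcan{x}\pcan{y}=\ch([\pre{k}{B}_x]\cdot[\pre{k}{B}_y])=\ch([\pre{k}{B}_x\otimes\pre{k}{B}_y])$; Krull--Schmidt decomposes $\pre{k}{B}_x\otimes\pre{k}{B}_y$ into shifted indecomposables $\pre{k}{B}_z(j)$, so the coefficients $\p{\mu}^{z}_{x,y}$ are non-negative integer combinations of powers of $v$, and self-duality follows because a tensor product of self-dual objects is self-dual. For (i), since $\D$ fixes $\pre{k}{B}_x$ and intertwines with $\widebar{(-)}$, the class $\pcan{x}$ is self-dual. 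For (iv), the flip realizes $\iota$ and sends $\pre{k}{B}_x$ to $\pre{k}{B}_{x^{-1}}$ (reduced expressions of $x^{-1}$ being the reverses of those of $x$), so $\iota(\pcan{x})=\pcan{x^{-1}}$; applying $\iota$ to the standard- and Kazhdan--Lusztig-basis expansions and using $\iota(\std{y})=\std{y^{-1}}$ and $\iota(\kl{y})=\kl{y^{-1}}$ yields the symmetries of $\p{h}$ and $\p{m}$. For (v), if $s\in\desc{L}(x)$ then $\pre{k}{B}_s\pre{k}{B}_x\cong\pre{k}{B}_x(1)\oplus\pre{k}{B}_x(-1)$, hence $\kl{s}\pcan{x}=(v+v^{-1})\pcan{x}$; as left multiplication by $\kl{s}$ acts by $(v+v^{-1})$ on the span of $\{\kl{y}:sy<y\}$ and maps everything into it, $\pcan{x}$ lies in that span, forcing $\p{m}_{y,x}=0$ unless $s\in\desc{L}(y)$. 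Running over all $s\in\desc{L}(x)$ gives $\desc{L}(x)\subseteq\desc{L}(y)$, and the right-hand condition is symmetric.

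The substantive parts are the triangularity and positivity in (ii) and (iii). Triangularity and the leading coefficient $1$ are read off from \cref{thmDiagProps}(ii): $\pre{k}{B}_x$ is a summand of $\expr{x}$ but of no $\expr{v}$ with $v<x$, which pins the highest term to $\std{x}$ with coefficient $1$ and confines the remaining support to $y<x$. The non-negativity of the standard-basis coefficients $\p{h}_{y,x}$ I would obtain from a standard ($\Delta$-)filtration of $\pre{k}{B}_x$, whose graded multiplicities are exactly the $\p{h}_{y,x}$. For (iii), unitriangularity of the change of basis follows by combining (ii) with the analogous triangularity of the Kazhdan--Lusztig basis, and self-duality of $\p{m}_{y,x}$ is immediate from (i): applying $\widebar{(-)}$ to $\pcan{x}=\sum_y\p{m}_{y,x}\kl{y}$ and using that $\pcan{x}$ and every $\kl{y}$ are self-dual forces $\p{m}_{y,x}=\widebar{\p{m}_{y,x}}$ by uniqueness of the expansion. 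The non-negativity of $\p{m}_{y,x}$ is the one point with no formal shortcut: I would lift $\pre{k}{B}_x$ to the indecomposable $\pre{\rO}{B}_x$ over $\rO=\Z_p$ and extend scalars to the characteristic-$0$ fraction field $\K$. Since $\ch$ is compatible with base change, the class of $\pre{\rO}{B}_x\otimes_{\rO}\K$ is again $\pcan{x}$, while by the characteristic-$0$ theory $\ch([\pre{\K}{B}_y])=\kl{y}$; hence $\p{m}_{y,x}$ is the graded multiplicity of $\pre{\K}{B}_y$ in $\pre{\rO}{B}_x\otimes_{\rO}\K$, a genuine non-negative quantity.

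Finally (vii): for $p=0$ the equality $\pcan{x}=\kl{x}$ is precisely the Soergel conjecture, proved by Elias--Williamson in \cite{EW1}; and for $p\gg0$ the idempotent cutting $\pre{\K}{B}_x$ out of $\expr{x}$ is defined over $\Z[1/N]$ for some integer $N=N(x)$ (the interval below $x$ being finite, only finitely many primes intervene), so for $p\nmid N$ it reduces to an idempotent in characteristic $p$ that still cuts out an indecomposable, giving $\pcan{x}=\kl{x}$. The main obstacle is exactly the non-negativity of the inverse base-change coefficients $\p{m}_{y,x}$ in (iii): on the Hecke algebra alone the inverse of a non-negative unitriangular matrix need not be non-negative, so this positivity is a truly categorical phenomenon, and it is the reason the integral $\rO$-form, modular reduction, and the characteristic-$0$ input of \cite{EW1} cannot be avoided; the existence of the standard filtration underlying (ii) is the only other ingredient that is not purely formal.
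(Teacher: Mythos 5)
The paper itself offers no proof of this proposition: it is quoted verbatim from \cite[Proposition 4.2]{JW}, with the $p=0$ case of (vii) attributed to \cite{EW1}. Your reconstruction follows essentially the same categorical strategy as that cited source: the vertical-flip duality $\D$ (which fixes each $\pre{k}{B}_x$ and induces $\widebar{(-)}$) for (i) and for the self-duality statements, the tensor-order-reversing flip inducing $\iota$ for (iv) (this functor is covariant but anti-monoidal, not ``contravariant monoidal''), Krull--Schmidt plus monoidality of $\ch$ for (vi), positivity built into the character map for (ii) (in the diagrammatic setting this is the standard filtration of $\Hom$-spaces with light-leaves bases rather than a literal $\Delta$-filtration of a bimodule, but that is the correct avatar), the lift to $\rO=\Z_p$ followed by extension to the fraction field and the characteristic-zero theory of \cite{EW1} for the positivity in (iii), and rationality of idempotents/intersection forms for (vii). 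All of these are sound and are exactly the mechanisms used in \cite{JW}.

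The one substantive problem is your proof of (v). You derive it from the isomorphism $\pre{k}{B}_s \otimes \pre{k}{B}_x \cong \pre{k}{B}_x(1)\oplus\pre{k}{B}_x(-1)$ for $s\in\desc{L}(x)$, i.e.\ from \cref{lemMultFormPCan}, which you assert without proof. But in \cite{JW} the logical order is the reverse: their Lemma 4.3 is a one-line \emph{consequence} of Proposition 4.2(5), since once one knows $\pcan{x}=\sum_{y:\,sy<y}\p{m}_{y,x}\kl{y}$, the Kazhdan--Lusztig multiplication formula gives $\kl{s}\pcan{x}=(v+v^{-1})\pcan{x}$ immediately. So, relative to the available references, your argument for (v) is circular, and the categorical isomorphism you invoke would need an independent (Frobenius-theoretic) proof that is not in your toolkit. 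The gap is fixable with ingredients you already have, in the order (ii), (iii) first, then (v): by \cref{thmDiagProps}(ii), $\pre{k}{B}_x$ is a direct summand of $\pre{k}{B}_s\otimes \BS(\expr{x}')$ for a reduced expression of $x$ starting with $s$; the character of the full object is $\kl{s}\,\ch([\BS(\expr{x}')])$, which lies in the left ideal $\kl{s}\heck=\bigoplus_{y:\,sy<y}\Z[v,v^{-1}]\kl{y}$; and by the positivity in (iii) both $\ch([\pre{k}{B}_x])$ and the character of the complementary summand have coefficients in $\Z_{\geqslant 0}[v,v^{-1}]$ in the Kazhdan--Lusztig basis, so no cancellation can occur and each character separately lies in that span. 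This forces $\p{m}_{y,x}=0$ unless $s\in\desc{L}(y)$; the right-descent half follows by applying (iv). Your division-by-$(v+v^{-1})$ step is perfectly fine once the multiplication formula is known --- the issue is solely where that formula comes from, and with the argument above you recover \cref{lemMultFormPCan} as a corollary of (v) rather than an input to it.
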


Denote by $\mu(y, x)$ the coefficient of $v$ in the Kazhdan-Lusztig 
polynomial $\pre{0}{h}_{y, x}$.

The following result (see \cite[Lemma 4.3]{JW}) shows the remnant
of the multiplication formula for the Kazhdan-Lusztig basis. Note 
that $\pcan{s} = \kl{s}$ for all $p \geqslant 0$ as the corresponding
Schubert variety is smooth.
\begin{lem}
   \label{lemMultFormPCan}
   For $x \in W$ and $s \in \desc{L}(x)$ we have:
   \[ \kl{s} \pcan{x} = (v + v^{-1}) \pcan{x} \text{.} \]
\end{lem}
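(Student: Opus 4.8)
The plan is to reduce the identity to its classical counterpart for the Kazhdan-Lusztig basis by expanding $\pcan{x}$ in that basis and applying the well-known multiplication rule term by term. Recall that rule: for any $s \in S$ and any $y \in W$ with $sy < y$ (equivalently $s \in \desc{L}(y)$) one has $\kl{s}\kl{y} = (v + v^{-1})\kl{y}$. In Soergel's normalization the base case $y = s$ follows at once from $\kl{s} = \std{s} + v$ together with $\std{s}^2 = (v^{-1}-v)\std{s} + 1$, which gives $\kl{s}^2 = (v+v^{-1})\kl{s}$; the general case is classical (see \cite{KL, S2}). Since $\pcan{s} = \kl{s}$ by the remark preceding the statement, it suffices to show that left multiplication by $\kl{s}$ scales $\pcan{x}$ by $v + v^{-1}$.

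By part (iii) of \cref{propPCanProps} I may write $\pcan{x} = \sum_{y \leqslant x} \p{m}_{y,x}\,\kl{y}$ with $\p{m}_{x,x} = 1$, and by part (v) each coefficient $\p{m}_{y,x}$ vanishes unless $\desc{L}(x) \subseteq \desc{L}(y)$. The crucial observation is that, because $s \in \desc{L}(x)$ by hypothesis, this support condition forces $s \in \desc{L}(y)$ for every $y$ occurring in the sum — including the leading term $y = x$. Thus every Kazhdan-Lusztig basis element appearing in the expansion of $\pcan{x}$ has $s$ as a left descent, so the classical rule applies uniformly to each summand.

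Multiplying on the left by $\kl{s}$ and invoking the rule term by term then yields
\[ \kl{s}\,\pcan{x} = \sum_{y \leqslant x} \p{m}_{y,x}\, \kl{s}\kl{y} = (v + v^{-1}) \sum_{y \leqslant x} \p{m}_{y,x}\, \kl{y} = (v + v^{-1})\,\pcan{x}, \]
which is the desired identity. There is essentially no obstacle beyond correctly invoking the support condition of part (v); the only nontrivial external input is the classical Kazhdan-Lusztig multiplication formula, which is standard.

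An alternative, purely categorical argument would instead establish the isomorphism $\pre{k}{B}_s\, \pre{k}{B}_x \cong \pre{k}{B}_x(1) \oplus \pre{k}{B}_x(-1)$ in $\HC$ whenever $s \in \desc{L}(x)$ and then apply the character isomorphism $\ch \colon \Galg{\HC} \to \heck$ of \cref{thmDiagProps}. The content there is the \enquote{absorption} of $\pre{k}{B}_s$, which can be read off from the local relations of the diagrammatic calculus; however, the algebraic route above is shorter and entirely self-contained given the properties already recorded in \cref{propPCanProps}, so I would present that one.
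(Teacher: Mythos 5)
Your proof is correct, and it is essentially the paper's own: the paper does not prove this lemma but imports it from \cite[Lemma 4.3]{JW}, and the derivation there is exactly the one you give --- expand $\pcan{x} = \kl{x} + \sum_{y < x} \p{m}_{y,x}\kl{y}$ via \cref{propPCanProps}(iii), invoke the support condition of \cref{propPCanProps}(v) to force $s \in \desc{L}(y)$ for every term occurring, and absorb $\kl{s}$ term by term using the classical formula $\kl{s}\kl{y} = (v+v^{-1})\kl{y}$ for $sy < y$. The only logical point worth making explicit is non-circularity: \cref{propPCanProps}(v) is established in \cite{JW} independently of this lemma (it comes from the characteristic-zero absorption formula applied to the indecomposable summands of objects of the form $\pre{k}{B}_s \otimes (-)$), so deducing the lemma from it, as you do, is sound.
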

   
\section{General \texorpdfstring{$p$}{p}-Cell Theory}
\label{secpCellTheory}

\subsection{First Results}

In this section, we want to give the definition of $p$-cells. This notion
is an obvious generalization of a notion introduced by Kazhdan-Lusztig in \cite{KL}.

\begin{defn}
   For $h \in \heck$ we say that \emph{$\pcan{w}$ appears with non-zero coefficient
   in $h$} if the coefficient of $\pcan{w}$ is non-zero when expressing $h$ in
   the $p$-canonical basis.
   
   Define a preorder $\cle{R}$ (resp. $\cle{L}$) on $W$ as follows:
   $x \cle{R} y$ (resp $x \cle{L} y$) if and only if $\pcan{x}$ appears with non-zero
   coefficient in $\pcan{y} h$ (resp. $h \pcan{y}$) for some $h \in \heck$.
   Define $\cle{LR}$ to be the preorder generated by $\cle{R}$ and $\cle{L}$, in 
   other words we have: $x \cle{LR} y$ if and only if $\pcan{x}$ appears with non-zero
   coefficient in $h \pcan{y} h'$ for some $h, h' \in \heck$.
\end{defn}

Restricting $h$ and $h'$ in the definitions above to any set of generators 
of $\heck$ as a $\Z[v, v^{-1}]$-algebra, one gets a set of generating relations 
for the corresponding $p$-cell preorders (see \cite[Proposition 4.1.1]{W1}). 
The following definition introduces some notation for the relations generating 
the $p$-cell preorder obtained from the generating set 
$\{ \kl{s} \; \vert \; s \in S \}$ which we will use in \cref{secStrCells}.
\begin{defn}
   Let $x, y \in W$. We write $x \multLe{L} y$ (resp.  $x \multLe{R} y$) 
   if $\pre{p}{\mu}_{s, y}^x$ (resp. $\pre{p}{\mu}_{y, s}^x$) is non-zero 
   for some $s \in S$. In addition, we write $x \multLe{2} y$ if
   $x \multLe{L} y$ or $x \multLe{R} y$ holds.
\end{defn}

For the sake of completeness, we will state explicitly that these elementary relations
generate the $p$-cell-preorder (see \cite[Lemma 5.3]{AHRHumphreysConj} for a proof):

\begin{lem}
   \label{lemGenCells}
   For $x, y \in W$ the following holds:
   $x \cle{R} y$ if and only if there exists a chain $x = x_0 \multLe{R} x_1 
   \multLe{R} \dots \multLe{R} x_k = y$. Similarly for the left (resp. two-sided) 
   $p$-cell preorder.
\end{lem}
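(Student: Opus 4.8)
The plan is to prove the two inclusions between the preorder $\cle{R}$ and the relation defined by chains of elementary steps. One inclusion is immediate: if $x \multLe{R} y$ then by definition $\pre{p}{\mu}^x_{y,s} \neq 0$ for some $s \in S$, which by \cref{propPCanProps}(vi) says exactly that $\pcan{x}$ appears with non-zero coefficient in $\pcan{y}\pcan{s} = \pcan{y}\kl{s}$; taking $h = \kl{s}$ gives $x \cle{R} y$. Since $\cle{R}$ is reflexive and transitive, every chain $x = x_0 \multLe{R} x_1 \multLe{R} \cdots \multLe{R} x_k = y$ produces $x \cle{R} y$.

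For the converse I would argue via a submodule. First I record that $\{\kl{s} \mid s \in S\}$ generates $\heck$ as a $\Z[v,v^{-1}]$-algebra: from $\kl{s} = \std{s} + v\std{\Wid}$ we get $\std{s} = \kl{s} - v\std{\Wid}$, and the $\std{s}$ generate $\heck$ since $\std{x} = \std{s_1}\cdots\std{s_n}$ for any reduced expression of $x$. Now fix $y \in W$ and set
\[ \Gamma_y \defeq \{ x \in W \mid \text{there is a chain } x = x_0 \multLe{R} x_1 \multLe{R} \cdots \multLe{R} x_k = y \} \text{,} \]
so that $y \in \Gamma_y$ via the empty chain, and let $M \defeq \sum_{x \in \Gamma_y} \Z[v,v^{-1}]\,\pcan{x}$.

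The crucial step is to show that $M$ is a right $\heck$-submodule; by the generation statement it suffices to check $M\kl{s} \subseteq M$ for each $s \in S$. Given $x \in \Gamma_y$, \cref{propPCanProps}(vi) gives
\[ \pcan{x}\kl{s} = \pcan{x}\pcan{s} = \sum_{z \in W} \pre{p}{\mu}^z_{x,s}\,\pcan{z} \text{,} \]
and whenever $\pre{p}{\mu}^z_{x,s} \neq 0$ we have $z \multLe{R} x$ by definition, so prepending this step to a chain from $x$ to $y$ shows $z \in \Gamma_y$ and hence $\pcan{z} \in M$. Thus $\pcan{x}\kl{s} \in M$, which establishes the stability.

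Finally, $\pcan{y} \in M$ forces $\pcan{y}h \in M$ for all $h \in \heck$. Since the $p$-canonical basis is a $\Z[v,v^{-1}]$-basis and $M$ is spanned by $\{\pcan{x} \mid x \in \Gamma_y\}$, any $\pcan{x}$ appearing with non-zero coefficient in $\pcan{y}h$ must satisfy $x \in \Gamma_y$; that is, $x \cle{R} y$ implies the existence of a chain. The left and two-sided statements follow by the same argument with left multiplication and $\multLe{L}$, respectively with multiplication on both sides and $\multLe{2}$. I expect the only real care to go into the index bookkeeping — matching each structure constant $\pre{p}{\mu}$ to the correct elementary relation and to the correct side of multiplication — since the submodule argument itself is formal.
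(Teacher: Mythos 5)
Your proof is correct. Bear in mind, though, that the paper itself contains no proof of this lemma: it records the statement ``for the sake of completeness'' and refers to \cite[Lemma 5.3]{AHRHumphreysConj} for a proof, having earlier invoked \cite[Proposition 4.1.1]{W1} for the general principle that restricting $h$ to any algebra generating set of $\heck$ yields generating relations for the $p$-cell preorder. Your submodule argument is exactly the standard argument behind those citations, so you have in effect supplied the proof the paper omits: the span $M$ of $\{\pcan{x} \mid x \in \Gamma_y\}$ is stable under right multiplication by the generators $\kl{s}$ precisely because a non-zero structure constant $\pre{p}{\mu}^z_{x,s}$ is by definition an elementary relation $z \multLe{R} x$, and since $M$ is spanned by a subset of a $\Z[v,v^{-1}]$-basis, $\pcan{y}h \in M$ forces every $p$-canonical term of $\pcan{y}h$ to be indexed by $\Gamma_y$. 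One small point deserves attention: in the easy direction you appeal to transitivity of $\cle{R}$. This is consistent with the paper, which declares $\cle{R}$ to be a preorder, but transitivity is not completely formal: one first reduces to $h$ being a $p$-canonical basis element and then needs the positivity $\pre{p}{\mu}^z_{x,y} \in \Z_{\geqslant 0}[v,v^{-1}]$ of \cref{propPCanProps}(vi) to exclude cancellation. Alternatively, you can avoid citing transitivity altogether by taking $h = \kl{s_k}\kl{s_{k-1}}\cdots\kl{s_1}$ for the chain $x = x_0 \multLe{R} x_1 \multLe{R} \dots \multLe{R} x_k = y$ and using the same positivity at each step to see that $\pcan{x}$ survives with non-zero coefficient in $\pcan{y}h$; either way, positivity is the real content of that direction.
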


In the remainder of the section, we will prove some elementary
properties of $p$-cells. In most cases we will focus on right $p$-cells
and not state the version for left $p$-cells explicitly.

In \cite[Proposition 2.4]{KL} Kazhdan-Lusztig observed that a Kazhdan-Lusztig right 
cell preorder relation implies an inclusion of left descent sets. The following 
result shows that the compatibility between cells and descent sets carries over 
to the more general setting. More precisely, the set of all elements with a fixed 
left descent set is a union of right $p$-cells. The result can also be found
in \cite[Lemma 5.4]{AHRHumphreysConj}:

\begin{lem}
   \label{lemCleDesc}
   For $x, y \in W$ with $y \cle{R} x$ we have $\desc{L}(x) \subseteq \desc{L}(y)$.
   In particular, $x \ceq{R} y$ gives $\desc{L}(x) = \desc{L}(y)$ and for any
   $I \subseteq S$ the set $\{w \in W \; \vert \; \desc{L}(w) = I \}$ is a union
   of right $p$-cells.
\end{lem}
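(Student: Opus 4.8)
The plan is to reduce to a single generating step and then extract the descent condition from the eigenvalue behavior of left multiplication by $\kl{t}$. First I would invoke \cref{lemGenCells} to replace the relation $y \cle{R} x$ by a chain $y = x_0 \multLe{R} x_1 \multLe{R} \cdots \multLe{R} x_k = x$, in which each step $x_i \multLe{R} x_{i+1}$ records that $\pcan{x_i}$ appears with non-zero coefficient in $\pcan{x_{i+1}}\kl{s_i}$ for some $s_i \in S$. Since inclusions of left descent sets compose transitively along the chain, it suffices to prove the single-step assertion: if $\pcan{a}$ appears in $\pcan{b}\kl{s}$, then $\desc{L}(b) \subseteq \desc{L}(a)$. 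Chaining this from $i=0$ to $k-1$ then yields $\desc{L}(x) = \desc{L}(x_k) \subseteq \cdots \subseteq \desc{L}(x_0) = \desc{L}(y)$, as required.

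For the single step, fix $t \in \desc{L}(b)$. By \cref{lemMultFormPCan} we have $\kl{t}\pcan{b} = (v + v^{-1})\pcan{b}$, so by associativity $\kl{t}\left(\pcan{b}\kl{s}\right) = (v + v^{-1})\left(\pcan{b}\kl{s}\right)$; that is, $\pcan{b}\kl{s}$ is a $(v+v^{-1})$-eigenvector for left multiplication by $\kl{t}$. The crux is then a sub-lemma characterizing this eigenspace: if $h \in \heck$ satisfies $\kl{t}h = (v+v^{-1})h$, then $h$ is a $\Z[v,v^{-1}]$-combination of those $\pcan{z}$ with $t \in \desc{L}(z)$. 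Granting this, $\pcan{b}\kl{s}$ expands only over basis elements $\pcan{z}$ with $t \in \desc{L}(z)$; since $\pcan{a}$ occurs, $t \in \desc{L}(a)$. As $t \in \desc{L}(b)$ was arbitrary, $\desc{L}(b) \subseteq \desc{L}(a)$.

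To prove the sub-lemma, I would write $h = \sum_z c_z \pcan{z}$ and split it as $h = P + Q$, where $P$ collects the terms with $t \in \desc{L}(z)$ and $Q$ those with $t \notin \desc{L}(z)$. \cref{lemMultFormPCan} gives $\kl{t}P = (v+v^{-1})P$, so the eigenvector equation reduces to $\kl{t}Q = (v+v^{-1})Q$, equivalently $\std{t}Q = v^{-1}Q$ using $\kl{t} = \std{t} + v$. I would then argue by looking at the leading term in the standard basis. Let $z_0$ be Bruhat-maximal in the support of $Q$; since $t \notin \desc{L}(z_0)$ we have $tz_0 > z_0$. Using $\pcan{z} = \std{z} + \sum_{u < z}(\cdots)\std{u}$ from \cref{propPCanProps}(ii) together with $\std{t}\std{z_0} = \std{tz_0}$, one checks that no other term can reach $\std{tz_0}$, so its coefficient in $\std{t}Q$ equals the leading coefficient $c_{z_0}$; whereas $tz_0 > z_0$ lies above the entire standard-basis support of $Q$ and so has coefficient $0$ in $v^{-1}Q$. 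Hence $c_{z_0} = 0$, forcing $Q = 0$ and $h = P$.

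Finally the "in particular" clauses are immediate: $x \ceq{R} y$ yields both inclusions $\desc{L}(x) \subseteq \desc{L}(y)$ and $\desc{L}(y) \subseteq \desc{L}(x)$, hence equality; and since right $p$-cells are exactly the $\ceq{R}$-classes, any two elements of a fixed cell share their left descent set, so each set $\{w \; \vert \; \desc{L}(w) = I\}$ is a union of right $p$-cells. I expect the main obstacle to be the sub-lemma: the eigenvector relation alone does not suffice, and one must exploit the unitriangularity of the $p$-canonical basis against the standard basis, combined with the maximality of $z_0$, to rule out cancellation at $\std{tz_0}$ — the key point being that $\std{t}$ pushes the Bruhat-top of $Q$ strictly upward, out of the support of $Q$ itself.
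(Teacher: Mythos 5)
Your proposal is correct, but it takes a genuinely different route from the paper's. Both arguments begin with the same reduction via \cref{lemGenCells} to a single product $\pcan{b}\kl{s}$, but from there the paper expands $\pcan{b}$ in the Kazhdan-Lusztig basis, applies the classical multiplication formula for $\kl{y}\kl{s}$, and combines three inputs: the descent-set compatibility of the base change coefficients (\cref{propPCanProps}(v)), the characteristic-zero statement \cite[Proposition 2.4]{KL}, and positivity (\cref{propPCanProps}(iii) and (vi)) to rule out cancellation, so that every $\pcan{y}$ occurring in the product indexes a $\kl{y}$ visibly occurring in the Kazhdan-Lusztig expansion. You instead isolate an eigenspace statement: by \cref{lemMultFormPCan} and associativity, $\pcan{b}\kl{s}$ is a $(v+v^{-1})$-eigenvector for left multiplication by $\kl{t}$ for each $t \in \desc{L}(b)$, and your sub-lemma identifies that eigenspace as the $\Z[v,v^{-1}]$-span of $\{\pcan{z} \; \vert \; t \in \desc{L}(z)\}$ by a leading-term computation in the standard basis, using only unitriangularity (\cref{propPCanProps}(ii)) and the relation $\kl{t} = \std{t} + v$. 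The step $\std{t}Q = v^{-1}Q \Rightarrow Q = 0$ is sound: if $z_0$ is Bruhat-maximal in the $p$-canonical support of $Q$, then $tz_0 \leqslant z'$ for some $z'$ in that support would force $z_0 < z'$, so $tz_0$ lies outside the standard support of $Q$, and hence the coefficient of $\std{tz_0}$ in $\std{t}Q$, which equals $c_{z_0}$, cannot be matched by $v^{-1}Q$. (Your phrase that $tz_0$ ``lies above the entire standard-basis support'' is imprecise because of Bruhat incomparability, but all you use is that $tz_0$ is not \emph{in} that support, which is exactly what maximality gives.) What your route buys: it is self-contained, avoiding both the appeal to characteristic-zero cell theory and all positivity properties of the $p$-canonical basis, and it establishes as a by-product the clean fact that the $(v+v^{-1})$-eigenspace of left multiplication by $\kl{t}$ is precisely the span of the $\pcan{z}$ with $tz < z$. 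What the paper's route buys: brevity given the cited inputs, and an illustration of the transfer principle used throughout the paper, namely that positivity lets the $p$-canonical basis inherit combinatorial facts wholesale from the Kazhdan-Lusztig basis.
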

\begin{proof}
   Due to \cref{lemGenCells} it is enough to consider the case 
   where we multiply $\pcan{x}$ with $\pcan{s}$ for $s \notin \desc{R}(x)$. 
   We have on the one hand:
   \[ \pcan{x} \pcan{s} = \sum_y \p{\mu}_{x,s}^{y} \pcan{y} \]
   On the other hand we can write:
   \begin{align*} 
   \pcan{x} \pcan{s} &= (\sum_{y \leqslant x} \p{m}_{y,x} \kl{y}) \kl{s} \\
                     &= \sum_{\substack{y \leqslant x \\ s \in \desc{R}(y)}}(v+v^{-1})
                        \p{m}_{y, x}\kl{y} + \sum_{\substack{y \leqslant x \\ s \not\in \desc{R}(y)}}
                        \p{m}_{y, x}(\kl{ys} + \sum_{\substack{z \leqslant y \\ s \in \desc{R}(z)}}
                        \mu(z, y) \kl{z})
   \end{align*}
   \Cref{propPCanProps}(v) shows that all $y \in W$ occurring with non-zero
   $\p{m}_{y,x}$ on the right hand side satisfy $\desc{L}(x) \subseteq \desc{L}(y)$.
   \cite[Proposition 2.4]{KL} shows that $z \cle[0]{R} y$ implies $\desc{L}(y) \subseteq \desc{L}(z)$.
   %Kazhdan-Lusztig cell theory implies that we have for all $z$ occurring with non-zero
   %$\mu(z, y)$ the inclusion $\desc{L}(y) \subseteq \desc{L}(z)$. (Recall
   %$\mu(ys,y) = 1$ if $ys > y$.) 
   Observe that the set of $y \in W$ with non-zero structure coefficient 
   $\p{\mu}_{x,s}^{y}$ is a subset of the set of all $y \in W$ indexing 
   a summand $\kl{y}$ with non-zero coefficient on the right hand side 
   (due to \cref{propPCanProps}(iii) and (vi)). Putting all of this together 
   gives the result.
\end{proof}

% \todo[inline]{Where to put this comment?}
% Observe that in an affine Weyl group for a spherical element $x$
% with $s \notin \desc{R}(x)$ the sum 
% \[\sum_{\substack{y \leqslant x \\ s \in \desc{R}(y)}}(v+v^{-1}) \p{m}_{y, x}\kl{y}\]
% vanishes as there is no element with all simple reflections in its right
% descent set and we have $\desc{R}(x) \subseteq \desc{R}(y)$ for
% all $y \in W$ with $\p{m}_{y,x} \neq 0$ due to \cref{propPCanProps}(v).
 
\begin{cor}
   \label{corIdCell}
   $\{\id\}$ is a left, right, and $2$-sided $p$-cell for all primes $p$.
\end{cor}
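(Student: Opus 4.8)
The whole argument will rest on one elementary observation about $W$: the identity $\id$ is the unique element with $\desc{L}(\id) = \varnothing$, and equally the unique element with $\desc{R}(\id) = \varnothing$. Indeed, if $w \neq \id$ and $w = s_1 \cdots s_n$ is reduced with $n \geqslant 1$, then $s_1 \in \desc{L}(w)$ and $s_n \in \desc{R}(w)$, whereas both descent sets of $\id$ are empty. I would record this first, since each of the three cell statements reduces to it.

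For the right $p$-cell I would argue directly. By definition the right $p$-cell of $\id$ is $\{w \in W \mid w \ceq{R} \id\}$, and every such $w$ satisfies in particular $\id \cle{R} w$. Feeding this into \cref{lemCleDesc} yields $\desc{L}(w) \subseteq \desc{L}(\id) = \varnothing$, hence $\desc{L}(w) = \varnothing$ and $w = \id$ by the observation above. Thus the right $p$-cell of $\id$ equals $\{\id\}$.

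For the left $p$-cell I would transport this across the anti-involution $\iota$. By \cref{propPCanProps}(iv) we have $\iota(\pcan{x}) = \pcan{x^{-1}}$, so $\iota$ permutes the $p$-canonical basis and, being an anti-homomorphism, interchanges left and right multiplication; consequently $x \cle{L} y$ holds if and only if $x^{-1} \cle{R} y^{-1}$. Since $\id^{-1} = \id$, the left $p$-cell of $\id$ is the image under $w \mapsto w^{-1}$ of its right $p$-cell, hence again $\{\id\}$. (Alternatively one invokes the left analogue of \cref{lemCleDesc}, namely $\id \cle{L} w \Rightarrow \desc{R}(w) \subseteq \desc{R}(\id) = \varnothing$.)

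The two-sided case is the one requiring care, and it is where I expect the only real obstacle, since a two-sided relation mixes left and right moves and so is not directly constrained by a single descent set. Here I would first use \cref{lemGenCells} to break the relation into elementary steps: if $w$ lies in the two-sided $p$-cell of $\id$ then $\id \cle{LR} w$, so there is a chain $\id = x_0 \multLe{2} x_1 \multLe{2} \cdots \multLe{2} x_k = w$. I would then show $x_i = \id$ for all $i$ by induction, for which it suffices to check that $\id \multLe{2} x$ forces $x = \id$. Unwinding the definition, $\id \multLe{L} x$ says that $\pcan{\id}$ occurs in $\pcan{s}\pcan{x}$ for some $s$, i.e. $\id \cle{L} x$, so $x = \id$ by the left-descent argument; symmetrically $\id \multLe{R} x$ gives $\id \cle{R} x$ and hence $x = \id$ via \cref{lemCleDesc}. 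The induction then forces $w = \id$, so the two-sided $p$-cell of $\id$ is $\{\id\}$ as well. The essential technical input throughout is precisely the reduction of the cell preorders to the elementary generators $\kl{s}$ afforded by \cref{lemGenCells}; once that reduction is in place, everything follows from the descent-set compatibility of \cref{lemCleDesc}.
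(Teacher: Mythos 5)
Your proof is correct and follows exactly the route the paper intends: the paper states this corollary without proof as an immediate consequence of \cref{lemCleDesc}, since $\id$ is the unique element with empty descent set. Your write-up simply makes explicit the details the paper leaves implicit, including the reduction of the two-sided case to elementary chain steps via \cref{lemGenCells}, which is the natural way to handle the only non-obvious point.
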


It is well known for Kazhdan-Lusztig cells that left and right cells are 
closely related via taking inverses. Using the $\Z[v, v^{-1}]$-linear anti-involution 
$\iota$ on $\heck$ together with \cref{propPCanProps}(iv) we obtain the
corresponding result for $p$-cells which will allow us to pass from left
to right $p$-cells:

\begin{lem}
   \label{corLeftRightEq}
   For all $x, y \in W$ we have:
   \begin{align*}
      x \cle{L} y \; &\Longleftrightarrow \; x^{-1} \cle{R} y^{-1} \text{,}\\
      x \cle{LR} y \; &\Longleftrightarrow \; x^{-1} \cle{LR} y^{-1} \text{.}
   \end{align*}
\end{lem}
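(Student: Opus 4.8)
The statement to prove is Lemma \ref{corLeftRightEq}, asserting that the anti-involution $\iota$ intertwines the left and right $p$-cell preorders, and that the two-sided preorder is preserved. The key tool is \cref{propPCanProps}(iv), which says $\iota(\pcan{x}) = \pcan{x^{-1}}$ for all $x \in W$; since $\iota$ is a $\Z[v,v^{-1}]$-linear anti-involution, it sends $\{\pcan{w}\}_{w\in W}$ to itself (bijectively, up to the reindexing $w \mapsto w^{-1}$), so it preserves and permutes the $p$-canonical basis. My strategy is to apply $\iota$ directly to the defining membership condition of each preorder and read off the equivalence.

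\textbf{First} I would unwind the definition of $x \cle{L} y$: this means $\pcan{x}$ appears with non-zero coefficient in $h\,\pcan{y}$ for some $h \in \heck$. Applying the $\Z[v,v^{-1}]$-linear map $\iota$ to this product, and using that $\iota$ reverses the order of multiplication, gives
\[
   \iota(h\,\pcan{y}) = \iota(\pcan{y})\,\iota(h) = \pcan{y^{-1}}\,\iota(h)\text{.}
\]
Because $\iota$ is a bijection sending $\pcan{x}$ to $\pcan{x^{-1}}$, the coefficient of $\pcan{x}$ in $h\,\pcan{y}$ equals the coefficient of $\pcan{x^{-1}}$ in $\pcan{y^{-1}}\,\iota(h)$. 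Since $\iota$ is an involutive bijection of $\heck$, as $h$ ranges over all of $\heck$ so does $h' \defeq \iota(h)$; hence the existence of some $h$ making the left-coefficient non-zero is equivalent to the existence of some $h'$ making $\pcan{x^{-1}}$ appear with non-zero coefficient in $\pcan{y^{-1}}\,h'$, which is exactly the definition of $x^{-1} \cle{R} y^{-1}$. This establishes the first equivalence.

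\textbf{For the two-sided case} the same mechanism applies to $x \cle{LR} y$, which means $\pcan{x}$ appears with non-zero coefficient in $h\,\pcan{y}\,h'$ for some $h, h' \in \heck$. Applying $\iota$ yields $\iota(h\,\pcan{y}\,h') = \iota(h')\,\pcan{y^{-1}}\,\iota(h)$, so the coefficient of $\pcan{x^{-1}}$ in this expression is non-zero for the pair $(\iota(h'), \iota(h))$; letting $h,h'$ range freely and using that $\iota$ is a bijection, this is precisely $x^{-1} \cle{LR} y^{-1}$. (One could equally deduce this from the first equivalence together with the analogous right-to-left statement, since $\cle{LR}$ is generated by $\cle{L}$ and $\cle{R}$.)

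\textbf{The main obstacle} is essentially bookkeeping rather than any real difficulty: I must be careful that $\iota$ genuinely acts as a \emph{bijection} on the $p$-canonical basis (guaranteed by \cref{propPCanProps}(iv) together with $w \mapsto w^{-1}$ being a bijection of $W$), so that ``appears with non-zero coefficient'' is transported faithfully, and that substituting $h' = \iota(h)$ legitimately sweeps out all of $\heck$ because $\iota$ is surjective. There is no hard estimate or induction here; the whole proof is a clean transport-of-structure argument, and the only place to slip would be forgetting that $\iota$ reverses multiplication or mis-handling the reindexing $x \mapsto x^{-1}$.
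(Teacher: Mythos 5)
Your proof is correct and follows exactly the route the paper intends: the paper gives no written-out proof, only the remark that the lemma follows from the anti-involution $\iota$ together with \cref{propPCanProps}(iv), and your argument is precisely the careful unwinding of that remark (applying $\iota$ to the defining products, using $\iota(\pcan{w}) = \pcan{w^{-1}}$ and the bijectivity of $\iota$ to re-parametrize $h$).
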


Next, we want to consider the question which automorphisms of our Coxeter system
induce automorphisms on $\heck$ that are well-behaved with respect to the $p$-canonical basis.
Let $\phi: (W, S) \overset{\sim}{\longrightarrow} (W, S)$ be an automorphism 
of Coxeter systems (in particular we have $\phi(S) = S$) which leaves the generalized
Cartan matrix $A$ invariant when permuting simultaneously the corresponding rows and columns 
(i.e. $\langle \cort[t], \rt[s] \rangle = \langle \cort[\phi(t)], 
\rt[\phi(s)] \rangle$ for all $s, t \in S$). Then $\phi$ induces a 
$\Z[v, v^{-1}]$-linear automorphism of $\heck$ via $\std{x} \mapsto \std{\phi(x)}$
for $x \in W$ which we will also denote by $\phi$ by slight abuse of 
notation. Therefore, $\phi$ maps $\kl{x}$ to $\kl{\phi(x)}$ by the defining
property of the Kazhdan-Lusztig basis.

\begin{prop}
   \label{propCoxAutom}
   In the setting given above we have for all $x, y \in W$:
   \begin{enumerate}
      \item $\phi(\pcan{x}) = \pcan{\phi(x)}$,
      \item $\pre{p}{m}_{y, x} = \pre{p}{m}_{\phi(y), \phi(x)}$ and 
            $\pre{p}{h}_{x,y} = \pre{p}{h}_{\phi(x), \phi(y)}$,
      \item $\pre{p}{\mu}_{x,y}^z = \pre{p}{\mu}_{\phi(x), \phi(y)}^{\phi(z)}$, 
      \item $x \cle{L} y \Leftrightarrow \phi(x) \cle{L} \phi(y)$ and 
            $x \cle{R} y \Leftrightarrow \phi(x) \cle{R} \phi(y)$.
   \end{enumerate}
\end{prop}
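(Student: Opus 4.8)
The crux of the proposition is part (i); once it is established, parts (ii)--(iv) follow formally by applying the $\Z[v,v^{-1}]$-algebra automorphism $\phi$ to the defining identities of \cref{propPCanProps} and comparing coefficients. My plan for (i) is to lift $\phi$ to the categorical level. Since $\phi$ permutes $S$ and leaves the generalized Cartan matrix invariant under the simultaneous permutation of rows and columns, the relabelling $s \mapsto \phi(s)$ of colours carries each generating morphism of the diagrammatic category $\BS$ to a generating morphism; because every defining relation of $\BS$ is built solely from the Cartan integers $\langle \cort[t], \rt[s] \rangle$, which $\phi$ preserves, this relabelling respects all relations. Hence $\phi$ induces a strict monoidal autoequivalence $\Phi$ of $\BS$ with $\Phi(B_s) = B_{\phi(s)}$, which extends to an autoequivalence of $\HC = \karalg{\BS}$ by functoriality of the additive, graded, and Karoubian closures. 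On the split Grothendieck group, the map induced by $\Phi$ on $\Galg{\HC} \cong \heck$ (via $\ch$) agrees with $\phi$: both are $\Z[v,v^{-1}]$-algebra endomorphisms sending $\kl{s} = [\pre{k}{B}_s]$ to $\kl{\phi(s)} = [\pre{k}{B}_{\phi(s)}]$, and these generate $\heck$.

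Next I would identify the images of the indecomposables. As an autoequivalence of the Krull--Schmidt category $\HC$ (see \cref{thmDiagProps}(i)), $\Phi$ sends $\pre{k}{B}_w$ to an indecomposable object. Moreover $\phi$ preserves the length function, so it carries a reduced expression $\expr{w}$ of $w$ to a reduced expression of $\phi(w)$, and $\Phi$ carries $\expr{w}$ to an object isomorphic to $\expr{\phi(w)}$; thus $\Phi(\pre{k}{B}_w)$ is a summand of $\expr{\phi(w)}$ not appearing as a shift of a summand of any $\expr{v}$ with $v < \phi(w)$, using here that $\phi$ also preserves the Bruhat order. By the uniqueness characterisation in \cref{thmDiagProps}(ii), this forces $\Phi(\pre{k}{B}_w) \cong \pre{k}{B}_{\phi(w)}$. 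Applying $\ch$ then yields $\phi(\pcan{w}) = \ch([\Phi(\pre{k}{B}_w)]) = \pcan{\phi(w)}$, which is (i).

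Granting (i), the remaining parts are bookkeeping. For (ii) I would apply $\phi$ to $\pcan{x} = \kl{x} + \sum_{y<x} \pre{p}{m}_{y,x}\kl{y}$ from \cref{propPCanProps}(iii), and to the standard-basis expansion of \cref{propPCanProps}(ii); since the coefficients lie in $\Z[v,v^{-1}]$ and are fixed by $\phi$, and since $\phi(\kl{z}) = \kl{\phi(z)}$, comparing with the defining expansion of $\pcan{\phi(x)}$ and invoking uniqueness of coefficients (together with $\phi$ preserving the Bruhat order) gives $\pre{p}{m}_{y,x} = \pre{p}{m}_{\phi(y),\phi(x)}$ and likewise for $\pre{p}{h}$. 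For (iii) I would apply the algebra automorphism $\phi$ to $\pcan{x}\pcan{y} = \sum_z \pre{p}{\mu}_{x,y}^z \pcan{z}$ from \cref{propPCanProps}(vi), use $\phi(\pcan{x}\pcan{y}) = \pcan{\phi(x)}\pcan{\phi(y)}$ and (i) on the right, and compare with the expansion of $\pcan{\phi(x)}\pcan{\phi(y)}$ after reindexing $z \mapsto \phi(z)$. Finally (iv) is immediate from (iii) together with \cref{lemGenCells}: since $\phi(S) = S$, the relation $x \multLe{R} y$ (i.e.\ $\pre{p}{\mu}_{y,s}^x \neq 0$ for some $s \in S$) is equivalent to $\phi(x) \multLe{R} \phi(y)$, and the chain characterisation then transports $\cle{R}$, and symmetrically $\cle{L}$, through $\phi$.

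The hard part will be the categorical step in (i): one must check carefully that the colour relabelling induced by $\phi$ is compatible with \emph{all} the defining relations of $\BS$ (this is exactly where invariance of the generalized Cartan matrix is essential, ruling out colour permutations that change the Cartan integers), and that the resulting autoequivalence genuinely matches $\phi$ on $\Galg{\HC}$. Once the autoequivalence is in place, the identification $\Phi(\pre{k}{B}_w) \cong \pre{k}{B}_{\phi(w)}$ is forced by the uniqueness statement in \cref{thmDiagProps}, and everything else reduces to applying an algebra automorphism to known expansions.
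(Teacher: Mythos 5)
Your proposal follows essentially the same route as the paper's proof: lift $\phi$ to a monoidal, $k$-linear autoequivalence of $\BS$ and hence of $\HC$ by permuting colours (using invariance of the Cartan matrix), identify the image of each indecomposable to obtain (i), and then deduce (ii)--(iv) formally by applying the algebra automorphism to the expansions from \cref{propPCanProps} and comparing coefficients. The only (minor) divergence is in how the identification $\Phi(\pre{k}{B}_w) \cong \pre{k}{B}_{\phi(w)}$ is justified: the paper appeals to the fact that the numerical input of the algorithm computing the $p$-canonical basis reduces to the $\phi$-invariant matrix $A$, whereas you invoke the Krull--Schmidt property and the uniqueness characterization of $\pre{k}{B}_w$ in \cref{thmDiagProps}, which is an equally valid and arguably more self-contained argument.
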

\begin{proof}
   Observe that $\phi$ induces a monoidal, $k$-linear equivalence 
   of $\BS$ and thus of $\HC$ which on the $\Hom$-spaces merely permutes the 
   colours in the diagrams (given by $S$) and the variables of the polynomials 
   in $R$ decorating the regions according to the action of $\phi$. Since the 
   numerical input for the algorithm to calculate the $p$-canonical basis (as 
   described in \cite[\S3]{JW}) reduces to $A$ we see immediately that this 
   equivalence sends $\pre{k}{B}_x$ to $\pre{k}{B}_{\phi(x)}$ and thus
   on the level of Grothendieck groups $\pcan{x}$ to $\pcan{\phi(x)}$. This proves (i).
   
   Recall that $\phi$ maps $\kl{x}$ to $\kl{\phi(x)}$ and $\std{x}$ to $\std{\phi(x)}$ for all $x \in W$.
   For this reason, (ii) follows from (i) by rewriting  $\pcan{x}$ in the Kazhdan-Lusztig basis (resp. standard
   basis), applying $\phi$, using (i) and comparing coefficients in the Kazhdan-Lusztig 
   basis (resp. standard basis). (i) implies (iii) in a similar way and (iv) follows
   from (iii).
\end{proof}

Suppose that our based root datum is irreducible. In this case, the last proposition 
can be applied to all automorphisms of the (extended) Dynkin diagram of our root 
system. In finite type conjugation by the longest element in the finite Weyl group 
is also covered by the last proposition. 
Indeed, it follows from \cite[Remark 13.1.8]{DaCoxGrps} that for irreducible finite Coxeter 
groups the longest element $w_0$ is central except in types $A_n$ for 
$n \geqslant 2$, $D_n$ with $n$ odd, $E_6$, and $I_2(m)$ for $m$ odd where $I_2(m)$ denotes
the dihedral group of order $2m$. In all these cases, conjugation by $w_0$ gives 
the obvious automorphism of the corresponding Coxeter graph. After restricting to 
crystallographic Coxeter systems, only simply-laced types remain and so any automorphism
of the Coxeter graph gives an automorphism of the Dynkin diagram of the same type in
the obvious way (as the graphs are isomorphic).

\begin{defn}
   \label{defFinSubset}
   Let $I \subseteq S$ be a subset.
   Call $I$ \emph{finitary} if the corresponding parabolic subgroup 
   $\langle I \rangle \subseteq W$ is finite. Define $W^I$ to be the set of 
   representatives of minimal length of cosets in $W / W_I$.
\end{defn}

The following result is the main result of this section and generalizes the parabolic 
compatibility for Kazhdan-Lusztig cells (see \cite[Proposition 9.11]{LuUneq}) to the 
setting of $p$-cells:

\begin{thmlab}[Parabolic compatibility of right $p$-cells]
   \label{thmParaCombPCells}
   Let $I \subseteq S$ be a finitary subset. Then for $y, z \in W_I$ the following holds:
   \[ z \cle{R} y \text{ in } W_I \Leftrightarrow \forall x \in W^I: xz \cle{R} xy \text{ in } W \]
\end{thmlab}

% We will give two proofs for this statement, first an algebraic proof
% and then a geometric proof (in the case of a finite Weyl group $W$, which should
% generalize to affine Weyl groups in a straight forward manner). Before we prove
% \cref{thmParaCombPCells} let us mention an easy consequence:
% 
% \begin{corlab}[Parabolic compatibility for left $p$-cells]
%    \label{thmIndLPCells}
%    Let $I \subseteq S$ be a finitary subset, ${}^IW$ the set of minimal length representatives
%    of cosets in $W_I \backslash W$, $y, z \in W_I$. Then the following holds:
%    \[ z \cle{L} y \text{ in } W_I \Leftrightarrow \forall x \in {}^IW: zx \cle{L} yx \text{ in } W \]
% \end{corlab}
% \begin{proof}
%    Use \cref{corLeftRightEq} and \cref{thmParaCombPCells} for $x^{-1}, y^{-1}, z^{-1}$
%    instead of $x, y, z$.
% \end{proof}

As a corollary to the proof of \Cref{thmParaCombPCells} we get:

\begin{cor}
   \label{corParPKLPols}
   In the setting of \Cref{thmParaCombPCells} we have:
   \[ \pre{p}{h}_{xy, xz} = \pre{p}{h}_{y,z} \]
\end{cor}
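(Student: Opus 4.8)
The plan is to reduce the statement to a clean identity on the coset $x W_I$ and then read off a single coefficient. Recall that $\pre{p}{h}_{w,u}$ is by definition the coefficient of $\std{w}$ in $\pcan{u}$. Since $x \in W^I$ gives $l(xy) = l(x) + l(y)$, and hence $\std{x}\std{y} = \std{xy}$, for every $y \in W_I$, left multiplication by $\std{x}$ sends the element $\pcan{z} = \sum_{y \in W_I} \pre{p}{h}_{y,z}\std{y}$ (the $p$-canonical basis element for $W_I$, which agrees with the one for $W$ because $\pre{k}{B}_z$ lies in the full subcategory of $\HC$ on the colours in $I$) to $\std{x}\pcan{z} = \sum_{y} \pre{p}{h}_{y,z}\std{xy}$, an element supported entirely on the coset $xW_I$. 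Writing $[h]_{xW_I}$ for the part of $h \in \heck$ supported on $xW_I$, the corollary is therefore equivalent to the coset identity $[\pcan{xz}]_{xW_I} = \std{x}\pcan{z}$, after which comparing the coefficient of $\std{xy}$ on the two sides yields $\pre{p}{h}_{xy,xz} = \pre{p}{h}_{y,z}$. This coset identity is exactly what the proof of \Cref{thmParaCombPCells} must produce, so the corollary is its shadow on standard-basis coefficients; below I indicate how I would establish the identity itself.

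First I would pass to Bott--Samelson characters. Choose a reduced expression $\expr{xz} = \expr{x}^{\frown}\expr{z}$ with $\expr{z}$ in the letters of $I$, so that $\ch\BS(\expr{xz}) = \sum_{\expr{e}} v^{\df(\expr{e})}\std{(\expr{xz}^{\expr{e}})_{\bullet}}$, the sum running over $01$-sequences. The key combinatorial observation is that a subexpression of $\expr{xz}$ evaluates into $xW_I$ if and only if its $\expr{x}$-part is taken in full: a subword value of $\expr{x}$ has length at most $l(x)$, and $x$ is the unique minimal-length element of $xW_I$, so any subexpression value lying in $xW_I$ forces all of $\expr{x}$ (necessarily with all decorations $U$, contributing defect $0$), followed by an arbitrary subexpression of $\expr{z}$ with value some $y \in W_I$. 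Moreover $l(xus) = l(x) + l(us)$ for $u \in W_I$, $s \in I$ shows $xus > xu \Leftrightarrow us > u$, so the $U/D$-pattern and the defect of the $\expr{z}$-part computed along the Bruhat stroll of $\expr{xz}$ coincide with those computed for $\expr{z}$ inside $W_I$. Hence $[\ch\BS(\expr{xz})]_{xW_I} = \std{x}\cdot\ch\BS(\expr{z})$.

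I would then pass from Bott--Samelson bimodules to the indecomposable $\pre{k}{B}_{xz}$ by induction on $z$ along the Bruhat order of $W_I$ (treating all $z' < z$ first). Writing $\ch\BS(\expr{xz}) = \pcan{xz} + \sum_{w<xz} m_w \pcan{w}$ and $\ch\BS(\expr{z}) = \pcan{z} + \sum_{z'<z} n_{z'}\pcan{z'}$ with graded multiplicities $m_w, n_{z'} \in \Z_{\geqslant 0}[v,v^{-1}]$, applying $[\,\cdot\,]_{xW_I}$ to the identity of the previous paragraph gives $[\pcan{xz}]_{xW_I} + \sum_w m_w[\pcan{w}]_{xW_I} = \std{x}\pcan{z} + \sum_{z'<z} n_{z'}\std{x}\pcan{z'}$, and the inductive hypothesis identifies each $\std{x}\pcan{z'}$ with $[\pcan{xz'}]_{xW_I}$. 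The hard part will be to match the two correction sums, i.e. to show that the non-coset summands $\pre{k}{B}_w$ with $w \notin xW_I$ contribute nothing to the $xW_I$-part while the coset summands occur with exactly the multiplicities $n_{z'}$; in positive characteristic these summands are given by no closed formula, so rather than computing them I would control the $xW_I$-part intrinsically, using the interpretation (Soergel's Hom-formula) of $\pre{p}{h}_{w,u}$ as the graded multiplicity of the standard bimodule $R_w$ ($R$ with right action twisted by $w$) in the standard ($\Delta$-)filtration of $\pre{k}{B}_u$. The content to be secured is then that the standard filtration of $\pre{k}{B}_{xz}$, restricted to the layers indexed by $xW_I$, reproduces $x$-translated the standard filtration of $\pre{k}{B}_z$; this parabolic compatibility of $\Delta$-flags is the genuine engine of \Cref{thmParaCombPCells}, and the corollary follows from it by inspecting a single coefficient.
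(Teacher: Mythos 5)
Your reduction of the corollary to the coset identity $[\pcan{xz}]_{xW_I} = \std{x}\,\pcan{z}$ is correct, and your Bott--Samelson-level identity $[\ch\BS(\expr{x}^{\frown}\expr{z})]_{xW_I} = \std{x}\cdot\ch\BS(\expr{z})$ is exactly the content of the paper's \Cref{lemBij} combined with the defect formula for $\kl{\expr{w}}$. The gap is in the passage to indecomposables. Your inductive step needs two inputs: first, the multiplicity matching $m_{xz'} = n_{z'}$, which is precisely the paper's intersection-form comparison (the actual engine of \Cref{thmParaCombPCells}) and which you invoke without proof -- acceptable; second, the vanishing $[\pcan{w}]_{xW_I} = 0$ for every summand $\pre{k}{B}_w$ of $\BS(\expr{x}^{\frown}\expr{z})$ with $w \notin xW_I$, which you never address and which cannot be addressed the way you hope. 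For a general $w \notin xW_I$ this vanishing is simply false: already in type $A_2$ with $I = \{t\}$, $x = s$, the element $\pcan{sts} = \kl{sts}$ has part $v\std{st} + v^2\std{s} \neq 0$ supported on $xW_I = \{s, st\}$. So the vanishing can only hold for those $w$ that genuinely occur as summands, and there it is not an independent ingredient: since $m_w$ and all $\pre{p}{h}_{u,w}$ lie in $\Z_{\geqslant 0}[v,v^{-1}]$, the sum $\sum_{w \notin xW_I} m_w [\pcan{w}]_{xW_I}$ vanishes if and only if each term does, and given the multiplicity matching this vanishing is \emph{equivalent} to the inductive step you are trying to prove. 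Your fallback via Soergel's hom formula is circular for the same reason: the assertion that the standard filtration of $\pre{k}{B}_{xz}$, restricted to the layers indexed by $xW_I$, is the $x$-translate of the standard filtration of $\pre{k}{B}_z$ is a verbatim restatement of $\pre{p}{h}_{xy,xz} = \pre{p}{h}_{y,z}$, not a fact from which it follows; it is also not what drives the paper's proof of \Cref{thmParaCombPCells}.

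The paper avoids this obstruction by never projecting onto a coset in the $p$-canonical basis. It fixes a total order on $W$ refining the Bruhat order in which the cosets $xW_I$ form contiguous blocks, and records two base-change matrices: Bott--Samelson to $p$-canonical, whose diagonal blocks all equal the corresponding matrix $M$ for $\heck[(W_I,I)]$ (this is the intersection-form comparison), and Bott--Samelson to standard, whose diagonal blocks all equal the corresponding matrix $B$ for $\heck[(W_I,I)]$ (this is the defect-preserving bijection of \Cref{lemBij}, i.e.\ your second paragraph). Both matrices are triangular with respect to this order, so in their product -- the matrix expressing the $p$-canonical basis in the standard basis, whose entries are the $\pre{p}{h}$'s -- any intermediate index contributing to an entry of the $(xW_I,xW_I)$ block is squeezed between two elements of $xW_I$ and hence lies in $xW_I$ itself. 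The diagonal block of the product therefore equals the product of the diagonal blocks, which is the $W_I$-matrix of $\pre{p}{h}_{y,z}$'s. This block bookkeeping is exactly what neutralizes the nonzero contributions $[\pcan{w}]_{xW_I}$ for $w \notin xW_I$ that your coset-projection argument cannot dispose of, and it yields the corollary from ingredients already established in the proof of the theorem, with no induction and no new geometric input.
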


\begin{remark}
   Let $H \subseteq W$ be a finite standard parabolic subgroup and $C \subseteq W$
   a right $p$-cell. \cref{thmParaCombPCells} implies immediately that 
   the cell module associated to $C$ when restricted to $H$ decomposes
   into cell modules for $H$ (see \cite[Proposition 3.11]{BVPrimIdealsExcepGrps} and
   \cite[Proposition 1]{RIndResCells}).
\end{remark}

\subsection{Proof of Theorem \ref{thmParaCombPCells} and Corollary \ref{corParPKLPols}}

First we will prove \Cref{thmParaCombPCells}: For all elements $w$ in $W_I \cup W^I$ 
choose a reduced expression $\expr{w}$. We have a bijection
\begin{align*}
   W^I \times W_I &\longrightarrow W \\
    (x, y) &\longmapsto xy
\end{align*}
such that $l(xy) = l(x) + l(y)$ (see \cite[Proposition 2.4.4]{BBCoxGrps}).
Therefore, for $x \in W^I$ and $y \in W_I$ the concatenation of the corresponding
reduced expressions $\expr{x}$ and $\expr{y}$ gives a reduced expression $\expr{x}^{\frown}\expr{y}$
of $xy$. Choose $x \in W^I$ and $y \in W_I$ arbitrarily. In the following we
sometimes identify a subexpression with its associated $01$-sequence.

\begin{lem}
   \label{lemBij}
   For all $z \in W_I$ we have a decoration- and defect-preserving bijection:
   \begin{alignat*}{3}
      g: \{ \text{subexpr.  of } &\expr{y} \text{ expressing } z\} \ 
         &&\overset{\sim}{\longrightarrow} \ \{ \text{subexpr.} &&\text{ of } \expr{x} \expr{y} \text{ expressing } xz \} \\
      &\expr{e} &&\longmapsto &&\underbrace{(1,\dots, 1)}_{l(x)\text{ ones}}{}^{\frown}\expr{e}
   \end{alignat*}
\end{lem}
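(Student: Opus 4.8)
The plan is to check in turn that $g$ is well-defined, injective, surjective, and decoration- and defect-preserving; the first two are immediate, while surjectivity and the decoration count both rest on the length-additivity property of minimal coset representatives. For well-definedness, note that since $\expr{x}$ is reduced the prefix of $l(x)$ ones contributes exactly $x$, so $(1,\dots,1)^{\frown}\expr{e}$ has product $x \cdot z = xz$ whenever $\expr{e}$ expresses $z$; hence $g$ lands in the correct set. Injectivity is clear, as $g$ merely prepends a fixed block and leaves $\expr{e}$ untouched.

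For surjectivity, let $\expr{f}$ be a subexpression of $\expr{x}^{\frown}\expr{y}$ expressing $xz$ and split it as $\expr{f} = \expr{f}_1^{\frown}\expr{f}_2$ along the concatenation, with products $a$ (a subword of $\expr{x}$) and $b$ (a subword of $\expr{y}$, so $b \in W_I$). From $ab = xz$ together with $b, z \in W_I$ I get $aW_I = abW_I = xzW_I = xW_I$, so $a$ lies in the coset $xW_I$ whose minimal-length representative is $x \in W^I$, whence $l(a) \geqslant l(x)$. On the other hand $a$ is a subword of a reduced expression for $x$, so $a \leqslant x$ in the Bruhat order and $l(a) \leqslant l(x)$. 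Thus $l(a) = l(x)$, forcing $a = x$; since $\expr{x}$ is reduced, the only subword of $\expr{x}$ expressing $x$ is the full one, so $\expr{f}_1 = (1,\dots,1)$, and cancelling gives $b = z$. Hence $\expr{f} = g(\expr{f}_2)$ with $\expr{f}_2$ expressing $z$.

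For the decoration and defect, observe first that the initial $l(x)$ entries of $g(\expr{e})$ are all $U1$: each $e_i = 1$, and since $\expr{x}$ is reduced every prefix product strictly increases in length, forcing $d_i = U$; these entries contribute nothing to $\df$. For the remaining entries, the stroll element of $g(\expr{e})$ at position $l(x)+j$ is $x \cdot w_j$, where $w_j \in W_I$ is the corresponding stroll element of $\expr{e}$. The decoration there compares $x w_{j-1} s$ with $x w_{j-1}$, where $s \in I$ is the $j$-th letter of $\expr{y}$. Applying $l(xw) = l(x) + l(w)$ for all $w \in W_I$ (see \cite[Proposition 2.4.4]{BBCoxGrps}, valid since $x \in W^I$) to both $w = w_{j-1}$ and $w = w_{j-1}s$, which lie in $W_I$, yields $l(x w_{j-1} s) - l(x w_{j-1}) = l(w_{j-1}s) - l(w_{j-1})$, so $x w_{j-1} s > x w_{j-1} \Leftrightarrow w_{j-1}s > w_{j-1}$. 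Thus the decoration at position $l(x)+j$ in $g(\expr{e})$ agrees with that at position $j$ in $\expr{e}$, and since the $e$-values agree as well, the decorated symbols coincide and the defects are equal.

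The real content lies in the two uses of the additivity $l(xw) = l(x) + l(w)$ for $x \in W^I$, $w \in W_I$: once, via cosets and the Bruhat order, to pin down $a = x$ in surjectivity, and once to transport the up/down comparison through left multiplication by $x$. I expect the most delicate bookkeeping to be confirming that each stroll element $w_{j-1}$ of $\expr{e}$ genuinely remains in $W_I$ so that the property applies; this holds because every prefix product of a subword of the reduced expression $\expr{y}$ for $y \in W_I$ stays inside $W_I$.
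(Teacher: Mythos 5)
Your proof is correct, and its overall architecture matches the paper's: well-definedness and injectivity dismissed as immediate, surjectivity forced by the minimality of $x$ in $xW_I$, the prefix decorated by $l(x)$ symbols $U1$, and the defect unaffected because entries with $e_i=1$ never contribute. The one genuine difference is how you transport the decoration of the tail. The paper invokes Dyer's theorem that the Bruhat graph on the coset $xW_I$, as a subgraph of the Bruhat graph of $W$, is isomorphic to that of $W_I$ (alternatively the subword property), whereas you derive the needed equivalence $xw_{j-1}s > xw_{j-1} \Leftrightarrow w_{j-1}s > w_{j-1}$ directly from the length additivity $l(xw) = l(x) + l(w)$ for $x \in W^I$, $w \in W_I$, applied to both $w_{j-1}$ and $w_{j-1}s$. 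Your route is more elementary and self-contained, since it only reuses the proposition from \cite{BBCoxGrps} already cited just before the lemma, while the paper's appeal to the Bruhat-graph isomorphism is stronger than strictly needed (it controls all reflection edges, not just the simple right multiplications that determine the decoration). Your surjectivity argument, pinning down $a = x$ by squeezing $l(a) \leqslant l(x)$ (from the subword property) against $l(a) \geqslant l(x)$ (from coset minimality), is the same idea the paper compresses into one sentence, just spelled out with the cancellation $b = z$ made explicit; you also correctly verify the small but necessary point that the stroll elements of $\expr{e}$ stay in $W_I$.
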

\begin{proof}
   It is easy to see that $g$ is well-defined and injective.
   Leaving out any letter of $\expr{x}$ in a subexpression of $\expr{x}\expr{y}$
   leads to a Bruhat stroll ending in a right coset $\widetilde{x} W_I$ with 
   $\widetilde{x} < x$ as $x$ is of minimal length in $xW_I$. Thus, $g$ is
   surjective.
   
   For the definition of the Bruhat graph we refer the reader to \cite[Definition
   1.1]{DBruhatGraph}. The induced Bruhat graph on $x W_I$ (as a subgraph of the Bruhat graph 
   of $W$) is isomorphic to the one of $W_I$ (see \cite[Theorem 1.4]{DBruhatGraph}). 
   This follows for example from the subword property (see \cite[Theorem 2.2.2]{BBCoxGrps}). 
   Thus any subexpression $g(\expr{e})$ of $\expr{x} \expr{y}$ expressing $xz$ 
   will have a decoration starting with $l(x)$ symbols $U1$ (as $\expr{x}$ is 
   a reduced expression) and the remaining expression $\expr{e}$ will be 
   decorated in the same way as $\expr{e}$ would be decorated as a subexpression 
   of $\expr{y}$ expressing $z$. Since the ones in a subexpression do
   not contribute to the defect, this immediately implies:
   \[ \df (g(\expr{e})) = \df (\expr{e}) \]
   where on the left (resp. right) hand side the defect is calculated as a subexpression
   of $\expr{x} \expr{y}$ (resp. $\expr{y}$).
\end{proof}

This bijection matches up the combinatorial data used to define the light leaves 
and thus allows us to compare the corresponding local intersection forms.
Consider the local intersection forms $I_{\expr{x}\expr{y}, xz}$ of $\expr{x} \expr{y}$
at $xz$ (resp. $I_{\expr{y}, z}$ of $\expr{y}$ at $z$) and the matrices 
representing them with respect to the light leaves bases (see \cite[\S3]{JW}).
For two subexpressions $\expr{e}, \expr{f}$ of $\expr{y}$ expressing $z$ 
we get in $\pre{k}{\HCnl{xz}} \otimes_R k$:
\[ (I_{\expr{x}\expr{y}, xz})_{g(\expr{e}), g(\expr{f})}  = 
      \id_{\BS(\expr{x})} (I_{\expr{y}, z})_{\expr{e}, \expr{f}} \]
This implies that the multiplicity of $\pre{k}{B}_{xz}$ and its grading shifts
in $\BS(\expr{x} \expr{y})$ which is given by $\grk(I_{\expr{x}\expr{y}, xz})$
coincides with the multiplicity of $\pre{k}{B}_z$ and its grading shifts
in $\BS(\expr{y})$ which is given by $\grk(I_{\expr{y}, z})$.

Choose any total order on $W$ refining the Bruhat order and preserving
elements in the same coset in $W / W_I$ as blocks of adjacent
elements. Note that our choices above have fixed a reduced expression $\expr{w}$
for any element $w \in W$. Denote by $M$ the base change matrix
from the Bott-Samelson basis $\{ \kl{\expr{w}} \; \vert \; w \in W_I \}$
to the $p$-canonical basis $\{ \pcan{w} \; \vert \; w \in W_I\}$ of $\heck[(W_I, I)]$.
$M$ is an upper-triangular, invertible matrix with entries in $\Z[v, v^{-1}]$  and
ones on the diagonal. The above considerations show that the base change matrix 
from the Bott-Samelson to the $p$-canonical basis of $\heck$ looks as follows:

\[ \bordermatrix{ & W_I & \dots & xW_I & \dots  \cr
   \phantom{x}W_I & M &  &  &  \cr 
   \phantom{X}\vdots &  & \ddots & & \raisebox{2ex}[0pt]{\BigAst} \cr
   xW_I &  &  & M & \cr
   \phantom{X}\vdots & \multicolumn{2}{c}{\raisebox{2ex}[0pt]{\BigZero}} & & \ddots & }
\]

Note that this form is preserved by taking inverses. This means that when expressing 
$\pcan{xy}$ in terms of the Bott-Samelson basis $M^{-1}$ gives all coefficients for terms
indexed by $\expr{x} \expr{z}$ for $z \in W_I$. Using this allows us to partly
decouple the terms $\kl{\expr{x}}$ and $\kl{\expr{y}}$. When calculating $\pcan{xy} \pcan{w}$
for $w \in W_I$ we can simply express both terms in the Bott-Samelson basis,
perform the calculation where only the structure coefficients for the Bott-Samelson
basis of $\heck[(W_I, I)]$ come into play and rewrite it in terms of the $p$-canonical basis.
This immediately implies the following result as we have full control over the
situation in the top coset $x W_I$:

\begin{cor}
   \label{corParStrCoeffs}
   For all $y, z, w \in W_I$ and a minimal coset representative $x$ of $W / W_I$
   we have:
   \[ \pre{p}{\mu}^{xz}_{xy, w} = \pre{p}{\mu}^z_{y,w} \]
\end{cor}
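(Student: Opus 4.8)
The plan is to make precise the slogan preceding the statement — that one has ``full control over the top coset $xW_I$'' — by equipping $\heck$ with a filtration by right $\heck[(W_I, I)]$-submodules indexed by the cosets in $W/W_I$, and then tracking where each piece of the product $\pcan{xy}\pcan{w}$ lands. First I would order the cosets by the total order fixed above (which refines the Bruhat order and keeps each coset as a contiguous block), and for a coset $c \in W/W_I$ let $\heck[c]$ denote the $\Z[v, v^{-1}]$-span of $\{\std{v} \mid v \in c\}$. The point to check is that right multiplication by $\kl{s}$ with $s \in I$ sends $\std{v}$ into the span of $\std{v}$ and $\std{vs}$, both of which lie in the same coset as $v$; hence each $\heck[c]$, and each partial sum $F_{\leqslant c} \defeq \bigoplus_{c' \leqslant c} \heck[c']$, is a right $\heck[(W_I, I)]$-submodule. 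Because the transition from the standard basis to the $p$-canonical basis is unitriangular for the total order (\cref{propPCanProps}(ii)) and that order refines Bruhat while respecting the coset blocks, $F_{\leqslant c}$ is equally the span of $\{\pcan{v} \mid \text{coset}(v) \leqslant c\}$; this is exactly what lets me pass between the two bases inside the filtration.

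Next I would isolate the top-coset part of $\pcan{xy}$. The block form of the base-change matrix established above shows that, written in the Bott-Samelson basis, the coefficients of $\kl{\expr{x} \expr{z}}$ (for $z \in W_I$) in $\pcan{xy}$ are precisely the Bott-Samelson coefficients of $\pcan{y}$ in $\heck[(W_I, I)]$; thus the top-coset part is $\kl{\expr{x}}\pcan{y}$, and $\pcan{xy} = \kl{\expr{x}}\pcan{y} + B$ with $B \in F_{<xW_I}$. The same matrix, now via its diagonal block $M$ in the $W_I$-slot (the case $x = \Wid$), identifies $\pcan{u}$ for $u \in W_I$ computed in $\heck$ with the $p$-canonical element of $u$ computed in $\heck[(W_I, I)]$, which is what allows the actual multiplication to take place inside the parabolic algebra.

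Finally I would compute. Since $\pcan{w} \in \heck[(W_I, I)]$, the lower part satisfies $B\pcan{w} \in F_{<xW_I}$ by the right-module property, so it contributes no $\pcan{xz}$. For the top part, $\kl{\expr{x}}\pcan{y}\pcan{w} = \kl{\expr{x}}\sum_{z' \in W_I} \pre{p}{\mu}^{z'}_{y,w}\pcan{z'} = \sum_{z'} \pre{p}{\mu}^{z'}_{y,w}\,\kl{\expr{x}}\pcan{z'}$, and applying the top-coset identification above to each $\pcan{xz'}$ gives $\kl{\expr{x}}\pcan{z'} = \pcan{xz'} + (\text{term in } F_{<xW_I})$. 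Collecting everything yields $\pcan{xy}\pcan{w} = \sum_{z' \in W_I} \pre{p}{\mu}^{z'}_{y,w}\pcan{xz'} + (\text{term in } F_{<xW_I})$, and since $xz \in xW_I$ lies in the top coset, the coefficient of $\pcan{xz}$ on the left, which is $\pre{p}{\mu}^{xz}_{xy,w}$, must equal $\pre{p}{\mu}^{z}_{y,w}$. I expect the main obstacle to be the bookkeeping of the first paragraph: one must verify that the coset filtration is simultaneously by right $\heck[(W_I, I)]$-submodules \emph{and} compatible with the $p$-canonical basis, so that right multiplication by $\pcan{w}$ can never move mass from a strictly lower coset up into $xW_I$. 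Once that compatibility is secured, the remaining computation is short.
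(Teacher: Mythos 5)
Your proof is correct and follows essentially the same route as the paper: both arguments rest on the block-triangular form of the base change matrix between the Bott-Samelson and $p$-canonical bases (with the parabolic matrix $M$ repeated along the diagonal, hence also in its inverse), and on decoupling $\pcan{xy}$ as $\kl{\expr{x}}\pcan{y}$ plus terms supported on lower cosets. Your filtration $F_{\leqslant c}$ by right $\heck[(W_I, I)]$-submodules, and your use of the parabolic $p$-canonical structure constants in place of the Bott-Samelson ones, simply make explicit the bookkeeping that the paper compresses into the phrase ``full control over the situation in the top coset $xW_I$'', so the difference is one of presentation rather than substance.
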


The last corollary proves \Cref{thmParaCombPCells} since elementary relations obtained from
$\pcan{w} \pcan{s}$ for $w \in W_I$ and $s \in I$ generate the right $p$-cell
preorder in $W_I$ (see \cref{lemGenCells}).

\noindent Next, we will prove \Cref{corParPKLPols}:

Recall the following lemma which in the case of a reduced expression $\expr{w}$ 
describes how to express the Bott-Samelson basis element $\kl{\expr{w}}$
in terms of the standard basis (see \cite[Lemma 2.10]{EW2}):

\begin{lem}
   For any expression $\expr{w}$ in $S$ we have:
   \[ \kl{\expr{w}} = \sum_{\substack{\expr{e}  \text{ subexpression} \\ \text{of } \expr{w}}} 
      v^{\df(\expr{e})} \std{(\expr{w}^{\expr{e}})_{\bullet}} \]
\end{lem}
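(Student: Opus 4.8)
The plan is to induct on the length $n = l(\expr{w})$, peeling off the last generator $\kl{s_n}$ and reading the powers of $v$ off the defect statistic. The one explicit value I need is $\kl{s} = \std{s} + v\std{\Wid}$: it lies in $\std{s} + v\Z[v]\std{\Wid}$ by the defining support condition (the only $y < s$ is $\Wid$), and self-duality pins down the coefficient once one computes $\std{s}^{-1} = \std{s} + (v - v^{-1})\std{\Wid}$, hence $\widebar{\std{s}} = \std{s} + (v-v^{-1})\std{\Wid}$, from the quadratic relation $\std{s}^2 = (v^{-1}-v)\std{s} + 1$.

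The engine of the induction is the right-multiplication rule
\[ \std{x}\kl{s} =
   \begin{cases}
      \std{xs} + v\std{x} & \text{if } xs > x, \\
      \std{xs} + v^{-1}\std{x} & \text{if } xs < x,
   \end{cases} \]
valid for every $x \in W$ and $s \in S$. I would obtain it from $\std{x}\kl{s} = \std{x}\std{s} + v\std{x}$ together with $\std{x}\std{s} = \std{xs}$ when $xs > x$, and $\std{x}\std{s} = \std{xs} + (v^{-1}-v)\std{x}$ when $xs < x$ (the latter by writing $\std{x} = \std{xs}\std{s}$ and applying the quadratic relation).

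For the inductive step, set $\expr{u} = \expr{w}_{\leqslant n-1}$, so that $\kl{\expr{w}} = \kl{\expr{u}}\kl{s_n}$, and substitute the inductive expansion $\kl{\expr{u}} = \sum_{\expr{e}} v^{\df(\expr{e})}\std{(\expr{u}^{\expr{e}})_{\bullet}}$. Applying the rule above turns each summand into
\[ v^{\df(\expr{e})}\std{(\expr{u}^{\expr{e}})_{\bullet}}\kl{s_n} = v^{\df(\expr{e})}\std{(\expr{u}^{\expr{e}})_{\bullet}\, s_n} + v^{\df(\expr{e}) \pm 1}\std{(\expr{u}^{\expr{e}})_{\bullet}} \text{,} \]
whose two terms I would match with the two length-$n$ subexpressions extending $\expr{e}$: appending $e_n = 1$ yields the element $(\expr{u}^{\expr{e}})_{\bullet}\, s_n$, while appending $e_n = 0$ yields $(\expr{u}^{\expr{e}})_{\bullet}$. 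Since each length-$n$ subexpression of $\expr{w}$ arises exactly once as such an extension, summing reconstructs the claimed formula, provided the exponents agree with $v^{\df}$ of the extended sequences.

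That agreement is the only delicate point, and it is exactly where the defect convention is designed to work; everything else is formal. The decoration of the final slot is governed by the comparison $(\expr{u}^{\expr{e}})_{\bullet}\, s_n \lessgtr (\expr{u}^{\expr{e}})_{\bullet}$, which is precisely the case distinction in the multiplication rule, and the decorations of the first $n-1$ slots depend only on the first $n-1$ entries, so they are unaffected by the extension. Appending $e_n = 1$ produces a symbol $U1$ or $D1$, neither contributing to the defect, matching the factor $1$ above. Appending $e_n = 0$ produces $U0$ when $(\expr{u}^{\expr{e}})_{\bullet}\, s_n > (\expr{u}^{\expr{e}})_{\bullet}$, raising the defect by $1$ and matching the factor $v$, and produces $D0$ in the opposite case, lowering the defect by $1$ and matching the factor $v^{-1}$. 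Thus $v^{\df}$ is tracked correctly in every case, closing the induction; the base case $n = 0$ is the empty expression, where $\kl{\varnothing} = \std{\Wid}$ matches its unique empty subexpression of defect $0$.
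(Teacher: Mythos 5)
Your proof is correct: the computation of $\kl{s} = \std{s} + v\std{\Wid}$ via self-duality, the right-multiplication rule for $\std{x}\kl{s}$ in both cases, and the defect bookkeeping when extending a subexpression of $\expr{w}_{\leqslant n-1}$ by $e_n \in \{0,1\}$ all check out, and the bijection between length-$n$ subexpressions and such extensions closes the induction. Note that the paper gives no proof of this lemma at all --- it simply cites \cite[Lemma 2.10]{EW2} --- and your induction on the length of the expression, peeling off the last factor $\kl{s_n}$, is exactly the standard argument behind that reference, so your write-up is a complete, self-contained substitute for the citation.
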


Recall that we have chosen a total order on $W$ in the proof of \Cref{thmParaCombPCells}.
Denote by $B$ the base change matrix from the Bott-Samelson basis 
$\{ \kl{\expr{w}} \; \vert \; w \in W_I \}$ to the standard basis
$\{ \std{w} \; \vert \; w \in W_I\}$ of $\heck[(W_I, I)]$. Then $B$ is an 
upper-triangular, invertible matrix with entries in $\Z[v, v^{-1}]$  and  
ones on the diagonal. The defect-preserving bijection from \cref{lemBij} 
shows that the base change matrix from the Bott-Samelson to the standard 
basis of $\heck$ looks as follows:

\[ \bordermatrix{ & W_I & \dots & xW_I & \dots  \cr
   \phantom{x}W_I & B &  &  &  \cr 
   \phantom{X}\vdots &  & \ddots & & \raisebox{2ex}[0pt]{\BigAst} \cr
   xW_I &  &  & B & \cr
   \phantom{X}\vdots & \multicolumn{2}{c}{\raisebox{2ex}[0pt]{\BigZero}} & & \ddots & }
\]

Multiplying the base change matrix from the $p$-canonical to the Bott-Samelson
basis with the base change matrix from the Bott-Samelson to the standard basis 
finishes the proof of \Cref{corParPKLPols}.

\subsection{Decomposition Criterion for Kazhdan-Lusztig Cells}
\label{secDecompCriterion}

In this section, we want to study the interplay between the weak right Bruhat order 
(see \cite[Definition 3.1.1]{BBCoxGrps} for the definition) and the right 
$p$-cell preorder. This will allow us to formulate a simple criterion as to when
$p$-cells decompose into Kazhdan-Lusztig cells.% This criterion will be quite 
% useful in affine rank $2$.

In the next few results we will focus on right cells, but a similar version
for left cells can easily be formulated. Throughout this section, we assume
that $W$ has finitely many right Kazhdan-Lusztig cells. This is known for
finite and affine Weyl groups by \cite[Theorem 2.2 (a)]{LuCellsInAffWeylGrpsII},
but there are examples of crystallographic Coxeter groups with infinitely
many right Kazhdan-Lusztig cells (see \cite{BeHypCoxGrps, BeRightAngledCoxGrps}).

For $x \in W$, let $\expr{x} = (s_1, s_2, \dots, s_k)$ be a reduced expression.
Set $x_i \defeq s_1 s_2 \dots s_i$ for all $0 \leqslant i \leqslant k$.
Since $\pcan{e} = \kl{e}$ there exists a maximal $0 \leqslant m \leqslant k$
such that for all $y \leqslant x_m$ with $\pre{p}{m}_{y, x_m} \ne 0$ we have
$y \cle[0]{R} x_m$. In this setting we have the following result:

\begin{lem}
   All $y \leqslant x$ with $\pre{p}{m}_{y, x}$ non-zero satisfy: $y \cle[0]{R} x_m$.
\end{lem}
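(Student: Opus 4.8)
The plan is to pin down $\pcan{x}$ inside the single product $h_0 \defeq \pcan{x_m}\,\kl{s_{m+1}}\cdots\kl{s_k}$ by comparing two expansions of $h_0$: one in the Kazhdan--Lusztig basis, controlled through the defining property of $m$ together with elementary cell theory, and one in the $p$-canonical basis, controlled through positivity. The vanishing of $\pre{p}{m}_{y,x}$ for the relevant $y$ then drops out of a nonnegativity argument. Note that only the containment property \emph{at} the index $m$ is used; its maximality merely makes the resulting bound $\cle[0]{R} x_m$ as strong as possible. Since $\pcan{s}=\kl{s}$ for every $s\in S$, the element $h_0$ is a product of $p$-canonical basis elements.

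First I would expand $h_0$ in the Kazhdan--Lusztig basis. By the defining property of $m$ we may write $\pcan{x_m}=\sum_{y}\pre{p}{m}_{y,x_m}\kl{y}$, where every $y$ that occurs satisfies $y\cle[0]{R}x_m$ and all coefficients lie in $\Z_{\geqslant 0}[v,v^{-1}]$ by \cref{propPCanProps}(iii). Right-multiplication by a single $\kl{s}$ sends each $\kl{y}$ to a $\Z_{\geqslant 0}[v,v^{-1}]$-combination of elements $\kl{w}$ occurring in $\kl{y}\kl{s}$; each such $w$ satisfies $w\cle[0]{R}y$ by the $p=0$ instance of the definition of $\cle{R}$ (take $h=\kl{s}$). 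Iterating over $s_{m+1},\dots,s_k$ and using transitivity of the preorder, I obtain $h_0=\sum_w d_w\,\kl{w}$ with $d_w\in\Z_{\geqslant 0}[v,v^{-1}]$ and $d_w=0$ unless $w\cle[0]{R}x_m$.

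Second I would expand $h_0$ in the $p$-canonical basis. As $\pre{k}{B}_{x_m}$ is a direct summand of $\BS(\expr{x_m})$ by \cref{thmDiagProps}(ii), the object $\pre{k}{B}_{x_m}\otimes\pre{k}{B}_{s_{m+1}}\otimes\cdots\otimes\pre{k}{B}_{s_k}$ is a direct summand of $\BS(\expr{x})$. A length count shows that every indecomposable summand of the complementary object carries a label of length strictly less than $l(x)$, whereas $\pre{k}{B}_x$ occurs in $\BS(\expr{x})$ exactly once and in degree $0$, by the multiplicity-one property of the top summand of a reduced expression. Hence $\pre{k}{B}_x$ occurs with multiplicity $1$ inside $\pre{k}{B}_{x_m}\otimes\pre{k}{B}_{s_{m+1}}\otimes\cdots\otimes\pre{k}{B}_{s_k}$, and passing to the Grothendieck group and applying \cref{propPCanProps}(vi) repeatedly gives
\[ h_0 = \pcan{x} + \sum_{z<x} a_z\,\pcan{z}, \qquad a_z\in\Z_{\geqslant 0}[v,v^{-1}], \]
the sum running over $z<x$ because the Bruhat support of $h_0$ is bounded above by $x$.

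Finally I would combine the two expansions. Substituting $\pcan{x}=\sum_y \pre{p}{m}_{y,x}\kl{y}$ and $\pcan{z}=\sum_y \pre{p}{m}_{y,z}\kl{y}$ and comparing the coefficient of $\kl{y}$ with the first expansion gives
\[ d_y = \pre{p}{m}_{y,x} + \sum_{z<x} a_z\,\pre{p}{m}_{y,z}. \]
Now fix $y$ for which $y\cle[0]{R}x_m$ fails, so that $d_y=0$. Each summand on the right lies in $\Z_{\geqslant 0}[v,v^{-1}]$ by \cref{propPCanProps}(iii) and the positivity of the $a_z$, and a sum of such Laurent polynomials vanishes only if every summand does; in particular $\pre{p}{m}_{y,x}=0$. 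Contrapositively, $\pre{p}{m}_{y,x}\neq 0$ forces $y\cle[0]{R}x_m$, which is the claim. I expect the delicate point to be the second step: one must verify that $\pcan{x}$ appears in $h_0$ with coefficient exactly $1$ and that all other $p$-canonical terms are strictly lower in the Bruhat order, since it is this unit leading coefficient that lets the final nonnegativity argument isolate $\pre{p}{m}_{y,x}$.
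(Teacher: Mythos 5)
Your proposal is correct and takes essentially the same route as the paper: both arguments come down to the facts that the product $\pcan{x_m}\,\kl{s_{m+1}}\cdots\kl{s_k}$ has Kazhdan--Lusztig support contained in $\{\, w \in W \;\vert\; w \cle[0]{R} x_m \,\}$ (by the defining property of $m$, the definition of $\cle[0]{R}$, and transitivity) and that $\pcan{x}$ occurs in this product with positive coefficient, so that positivity of all base-change and structure coefficients rules out cancellation and forces the KL-support of $\pcan{x}$ into the same set. Your Soergel-bimodule detour establishing the unit coefficient of $\pcan{x}$ is a valid (if optional) elaboration of what the paper's one-line proof leaves implicit; a purely algebraic triangularity argument, or even mere non-vanishing of that coefficient, would suffice.
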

\begin{proof}
   The claim follows as $\pcan{x_{l}}$ for $l \geqslant m$ is a linear combination of 
   Kazhdan-Lusztig basis elements indexed by elements in $\{ w \in W \; \vert \; 
   w \cle[0]{R} x_m \}$.
\end{proof}

Observe that $x_m \cge[0]{R} x$ always holds. So if $x_m \cle[0]{R} x$, then $x$
and $x_m$ lie in the same Kazhdan-Lusztig right cell.

\begin{cor}
   \label{corFirstStepDecompCrit}
   If $x_m$ and $x$ lie in the same Kazhdan-Lusztig right cell, then for 
   $y \leqslant x$ with $\pre{p}{m}_{y, x}$ non-zero we have $y \cle[0]{R} x$.
\end{cor}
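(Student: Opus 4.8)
The plan is to deduce the Corollary immediately from the Lemma directly preceding it, using nothing more than transitivity of the Kazhdan--Lusztig right preorder $\cle[0]{R}$.

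First I would fix an element $y \leqslant x$ with $\pre{p}{m}_{y,x} \neq 0$. The preceding Lemma already records that any such $y$ satisfies $y \cle[0]{R} x_m$; this is the substantive input, and it rests on the fact that for $l \geqslant m$ the element $\pcan{x_l}$ is a $\Z[v, v^{-1}]$-linear combination of Kazhdan--Lusztig basis elements $\kl{w}$ indexed by elements $w$ with $w \cle[0]{R} x_m$. Next I would unpack the hypothesis: the assertion that $x_m$ and $x$ lie in the same Kazhdan--Lusztig right cell means exactly $x_m \ceq[0]{R} x$, and in particular $x_m \cle[0]{R} x$. Chaining the two relations into $y \cle[0]{R} x_m \cle[0]{R} x$ and invoking transitivity of $\cle[0]{R}$ then yields $y \cle[0]{R} x$, which is the claimed conclusion.

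I do not expect any genuine obstacle here: the Corollary is a purely formal consequence of the Lemma together with transitivity of the preorder. The only point that deserves a moment's care is the \emph{direction} of the inequality extracted from the hypothesis. Since membership in a common right cell is by definition the symmetric relation $\ceq[0]{R}$, it supplies the relation $x_m \cle[0]{R} x$ directly; this is consistent with the observation made just before the Corollary that $x_m \cge[0]{R} x$ holds unconditionally, so that the extra relation $x_m \cle[0]{R} x$ is precisely what upgrades that always-valid inequality to cell-equivalence and makes the transitivity argument go through.
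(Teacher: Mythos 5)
Your proof is correct and is exactly the argument the paper intends: the corollary follows from the preceding lemma ($y \cle[0]{R} x_m$) combined with the relation $x_m \cle[0]{R} x$ supplied by the cell-equivalence hypothesis, and transitivity of the preorder. You also correctly identify that it is this direction, not the automatic $x_m \cge[0]{R} x$, that does the work.
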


\begin{defn}
   Let $C \subseteq W$ be an arbitrary subset. $C$ is called \emph{right-connected}
   if for every two elements $x, y \in C$, there exists a sequence $x = x_0$, 
   $x_1$, $\dots$, $x_k = y$ in $C$ such that $x_i^{-1} x_{i-1} \in S$ (i.e. $x_{i-1}$ 
   and $x_i$ differ by a simple reflection on the right) for all $1 \leqslant i \leqslant k$. 
   It follows that $C$ decomposes as a disjoint union of its right-connected components, 
   i.e. the maximal right-connected subsets.
   
   Call an element $x \in C$ \emph{right-minimal} if $x$ cannot be reached from
   any other element $y \in C\setminus \{x\}$ via a sequence $y= x_0$, $x_1$, 
   $\dots$, $x_k = x$ in $C$ as above satisfying in addition $y < x_1 < x_2 < 
   \dots < x_k = x$. Observe that an element is right-minimal if
   and only if it is minimal with respect to the weak right Bruhat order.
   
   Similarly we define \emph{left-connected} and \emph{left-minimal} using
   multiplication by simple reflections on the left, as well as \emph{$2$-connected}
   and \emph{$2$-minimal}.
\end{defn}

The following observation follows immediately from the definition of a right-minimal
element, but shows their most important property:
\begin{lem}
   \label{lemRMinElts}
   Let $C \subseteq W$ be an arbitrary subset. For all $y \in C$ there exists
   a right-minimal element $x \in C$ such that $y \cle[0]{R} x$ and $y \cle{R} x$.
\end{lem}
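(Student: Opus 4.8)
The plan is to descend from $y$ through $C$ along the weak right Bruhat order until reaching a right-minimal element, checking at each downward step that both preorder relations propagate. Since the length drops at every step the process terminates, and transitivity of the two preorders then assembles the desired inequalities.

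First I would record the form of right-minimality convenient for an induction: an element $w \in C$ fails to be right-minimal exactly when there is some $s \in \desc{R}(w)$ with $ws \in C$. Indeed, the last edge of any chain $x_0 < x_1 < \dots < x_k = w$ inside $C$ with consecutive terms differing by a right simple reflection is of the shape $x_{k-1} = ws$ with $s \in \desc{R}(w)$ and $ws \in C$; conversely such an $s$ yields the one-step chain $ws < w$ witnessing that $w$ is not right-minimal.

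The key step is a one-step descent statement, valid uniformly for all $p \geqslant 0$: if $u \in W$ and $s \in S$ satisfy $us > u$, then $us \cle{R} u$ and $us \cle[0]{R} u$. To prove it I would expand $\pcan{u}\kl{s} = \sum_{z} \pre{p}{\mu}_{u,s}^{z}\,\pcan{z}$ and read off the coefficient of the standard basis element $\std{us}$. Using $\kl{s} = \std{s} + v\std{e}$ together with the triangularity $\pcan{u} = \std{u} + \sum_{y<u}\pre{p}{h}_{y,u}\std{y}$ of \cref{propPCanProps}(ii), the element $\std{us}$ occurs with coefficient $1$ (it can only arise from the leading product $\std{u}\std{s}$), and since $us$ is maximal in the Bruhat support of $\pcan{u}\kl{s}$ this forces $\pre{p}{\mu}_{u,s}^{us} = 1$. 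Hence $\pcan{us}$ appears with non-zero coefficient in $\pcan{u}\kl{s}$, i.e. $us \multLe{R} u$, giving $us \cle{R} u$. The case $p=0$ is identical once one recalls $\pcan{w} = \kl{w}$ from \cref{propPCanProps}(vii), so that $\cle[0]{R}$ is the Kazhdan-Lusztig preorder and the same computation with the Kazhdan-Lusztig basis yields $us \cle[0]{R} u$.

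Finally I would run the descent: set $y_0 = y$, and while $y_i$ is not right-minimal pick $s_i \in \desc{R}(y_i)$ with $y_{i+1} \defeq y_i s_i \in C$. Applying the key step to $u = y_{i+1}$ (so that $u s_i = y_i > y_{i+1}$) gives $y_i \cle{R} y_{i+1}$ and $y_i \cle[0]{R} y_{i+1}$. As $l(y_{i+1}) < l(y_i)$, the chain must stop at a right-minimal $x \in C$, and transitivity of the two preorders yields $y \cle{R} x$ and $y \cle[0]{R} x$. The only genuine care required is in bookkeeping the direction of the preorder — in the weak right order it is the shorter element that dominates — and in observing that the descent never leaves $C$, which is precisely the reformulation of non-minimality above; beyond this the statement really does follow immediately from the definitions.
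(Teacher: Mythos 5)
Your proof is correct and is essentially the paper's (implicit) argument: the paper states this lemma without proof, as following immediately from the definition of right-minimality, and your descent along the weak right Bruhat order, with each step $y_i \cle{R} y_i s_i$ and $y_i \cle[0]{R} y_i s_i$ certified by the unitriangularity computation $\pre{p}{\mu}_{u,s}^{us} = 1$, is exactly the content being taken for granted there. The one point worth making explicit is why $us$ is maximal in the Bruhat support of $\pcan{u}\kl{s}$ — this uses the lifting property ($w < u$ and $us > u$ imply $ws \leqslant us$) — after which your identification of the leading coefficient is airtight.
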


At this point we should mention the following conjecture by Lusztig which he originally
formulated for (affine) Weyl groups in \cite[p. 14]{HKOpenProbs}. It still appears to 
be open in finite type $D_n$ and in general affine type:
\begin{conj}
   Every Kazhdan-Lusztig right cell in a Coxeter group is right-connected.
\end{conj}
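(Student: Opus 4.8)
The plan is to recast the conjecture graph-theoretically and then use the Kazhdan--Lusztig star-operations to reduce it to a short list of base cases. Fix a Kazhdan--Lusztig right cell $C$ and let $\Gamma_C$ be the graph on $C$ with an edge between $w$ and $wr$ whenever $r \in S$ and both $w, wr \in C$; right-connectedness of $C$ is exactly the connectedness of $\Gamma_C$.

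First I would reduce to the right-minimal elements. The construction behind \cref{lemRMinElts} shows that every $y \in C$ sits atop a strictly length-decreasing chain $y = y_0 > y_1 > \dots > y_r$ with $y_{i+1} = y_i r_i$ for $r_i \in S$ and all $y_i \in C$, ending at a right-minimal element; such a chain is a path in $\Gamma_C$. Hence every connected component of $\Gamma_C$ meets the set of right-minimal elements of $C$, and it suffices to prove that any two right-minimal elements lie in a common component of $\Gamma_C$.

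The key tool is the Kazhdan--Lusztig left star-operation. For $s \neq t \in S$ with $m_{s,t} = 3$ and $w$ in the domain of the left $\{s,t\}$-star operation (i.e.\ exactly one of $s, t$ lies in $\desc{L}(w)$), the starred element $w^{\ast}$ differs from $w$ by left multiplication by $s$ or $t$; by \cite[\S4]{KL} the star-operation induces a bijection between the right cell $C$ of $w$ and the right cell $C^{\ast}$ of $w^{\ast}$. The decisive point is that left multiplication commutes with right multiplication: if $\{w, wr\}$ is an edge of $\Gamma_C$, then $wr$ has the same left descents as $w$, hence lies in the same domain and $(wr)^{\ast} = w^{\ast} r$, so the star-operation carries the edge $\{w, wr\}$ to the edge $\{w^{\ast}, w^{\ast} r\}$. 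Thus the left star-operation is an isomorphism of graphs $\Gamma_C \cong \Gamma_{C^{\ast}}$, and $C$ is right-connected if and only if $C^{\ast}$ is. Iterating over all admissible pairs $\{s,t\}$, it suffices to establish right-connectedness for one representative in each orbit of right cells under star-operations (and, via \cref{corLeftRightEq}, one may freely pass between the left and right versions of the statement).

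The main obstacle is the endgame: identifying and settling these base cases. In finite type $A$ this succeeds, because the star-operations act transitively enough on right cells --- in the language of the Robinson--Schensted correspondence they realise the moves on the indexing tableaux --- that every cell is star-equivalent to one whose right-connectedness is transparent, recovering the known result (compare \cite[\S5]{KL} and \cref{thmPCellsTypeA}). In other types the star-orbits of cells are no longer governed by such a clean combinatorial model, there need not be a manageable set of base cases, and no uniform argument is known to connect the right-minimal elements of an arbitrary cell; this is precisely why the conjecture remains open in finite type $D_n$ and in general affine type. A route to further cases would be to feed in type-specific cell descriptions --- for instance Garfinkle's domino tableaux and cycle-moving operations in types $B$, $C$, $D$ (see \cite{GaPrimIdealsIII}) --- to exhibit explicit $\Gamma_C$-paths between right-minimal elements.
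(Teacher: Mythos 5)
The first thing to observe is that this statement is not a theorem of the paper at all: it is Lusztig's conjecture (from \cite{HKOpenProbs}), which the paper records explicitly as \emph{open} in finite type $D_n$ and in general affine type, followed only by a survey of the cases known in the literature (type $A$ via \cite[\S5]{KL}, types $B/C$ via \cite{GaPrimIdealsIII}, the exceptional types via \cite{GHCellsInE8}, affine $\widetilde{A}_n$ via \cite{ShCellsInAffA}, etc.). So there is no proof in the paper to compare yours against, and any argument purporting to settle the conjecture in full generality would be suspect. To your credit you do not claim one: your final paragraph concedes that the base cases of your reduction cannot be controlled, which is exactly consistent with the status of the problem. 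Your first reduction is also fine: by the argument behind \cref{lemRMinElts}, every element of a cell $C$ is joined inside $\Gamma_C$ to a right-minimal element by a length-decreasing path, so every connected component of $\Gamma_C$ meets the right-minimal elements.

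However, the one substantive technical assertion you do make is false. You claim that for an edge $\{w, wr\}$ of $\Gamma_C$ the left star operation satisfies ${}^{\ast}(wr) = ({}^{\ast}w)\,r$ ``since left multiplication commutes with right multiplication''. But the left star operation is not left multiplication by a fixed element: it returns $sw$ or $tw$ according to the position of $w$ in its left $\langle s,t\rangle$-string, and right multiplication by $r$ can change that position. Concretely, in $S_3$ with $s = s_1$, $t = s_2$, take the right cell $C = \{s_1,\ s_1s_2\}$ (both elements have left descent set $\{s_1\}$, so $C \subseteq \mathcal{D}_L(s_1,s_2)$) and the edge given by $r = s_2$. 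The left strings through these elements are $\{s_1, s_2s_1\}$ and $\{s_2, s_1s_2\}$, so ${}^{\ast}(s_1) = s_2s_1$ while ${}^{\ast}(s_1s_2) = s_2$; hence $({}^{\ast}s_1)\,s_2 = s_2s_1s_2 = w_0 \neq s_2 = {}^{\ast}(s_1s_2)$. In this example the image pair $\{s_2s_1, s_2\}$ still happens to be an edge, but via $s_1$ rather than $s_2$; that the star \emph{always} sends edges of $\Gamma_C$ to edges of $\Gamma_{C^{\ast}}$ (with possibly different labels) is precisely what your strategy needs, and your commutation argument does not establish it. What \cite[\S4]{KL} actually provides is that the left star operation is a bijection $C \to C^{\ast}$ compatible with the right cell preorder --- the statement the paper generalizes in \cref{propLCellRStar} and \cref{propCellStar} --- not an isomorphism of weak-order graphs. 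And even granting edge-preservation, the reduction to star-orbit representatives leaves the genuinely hard part untouched, as you yourself acknowledge; so the proposal is a heuristic with a flawed key step, not a proof --- which, for this statement, is the correct state of affairs.
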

In finite type, the conjecture is known to hold for all dihedral groups,
in type $A_n$ (see \cite[\S5]{KL}), $B_n$ (see \cite[Theorem 3.5.9]{GaPrimIdealsIII})
and in all exceptional types $H_3$, $H_4$, $F_4$, $E_6$, $E_7$ and $E_8$ 
(see \cite[Example 7.3]{GHCellsInE8}). In affine type, it has been verified in 
affine rank $2$ (see \cite[Theorem 11.3]{LuCellsInAffWeylGrpsI}), 
for $W$ of type $\widetilde{A}_n$ with $n \geqslant 1$ (see \cite[Theorem 18.2.1]{ShCellsInAffA}), 
for Kazhdan-Lusztig right cells contained in the lowest Kazhdan-Lusztig two-sided 
cell (see \cite[Corollary 1.2]{ShTwoCellII}), for Kazhdan-Lusztig right cells contained 
in the Kazhdan-Lusztig two-sided cell of elements with a unique reduced expression (see \cite[Proposition 3.8]{Lu1}) 
and some other special cases (see for example \cite{XLeftConnected}, \cite[Theorem 4.8]{ShCoxEltsCells} and 
\cite{ShFullyCommEltsCells}).

In the rest of the section we want to apply these notions to compare Kazhdan-Lusztig
and $p$-cells by looking at minimal elements. We will focus on right cells even
though there are similar results about left (resp. two-sided) cells. Using
a similar idea as in \Cref{corFirstStepDecompCrit} we obtain the following result:
\begin{cor}
   \label{corCoeff}
   Let $C$ be a Kazhdan-Lusztig right cell and $C_{\text{min}}$ the set of 
   right-minimal elements in $C$. Assume for all $x \in C_{\text{min}}$ and 
   $y \leqslant x$ the following:
   \[ \pre{p}{m}_{y, x} \ne 0 \Rightarrow y \cle[0]{R} x \]
   Then for all $x \in C$ and $y \leqslant x$ with $\pre{p}{m}_{y,x} \ne 0$ we have 
   $y \cle[0]{R} x$.
\end{cor}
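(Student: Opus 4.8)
The plan is to reduce the statement, for each individual $x \in C$, to the hypothesis of \cref{corFirstStepDecompCrit}, and to drive this reduction by induction on $l(x)$ over the elements of $C$. Throughout, write $P(x)$ for the assertion that every $y \leqslant x$ with $\pre{p}{m}_{y,x} \neq 0$ satisfies $y \cle[0]{R} x$; this is exactly what must be proved for all $x \in C$, and it is granted by hypothesis for $x \in C_{\text{min}}$. Recall that \cref{corFirstStepDecompCrit} yields $P(x)$ as soon as the associated element $x_m$ (built from a chosen reduced expression of $x$) lies in the same Kazhdan-Lusztig right cell as $x$, i.e.\ lies in $C$. So the whole task becomes: show that, for a well-chosen reduced expression of each $x \in C$, the truncation $x_m$ again belongs to $C$.

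I would argue by strong induction on $l(x)$ for $x \in C$. If $x$ is right-minimal, then $x \in C_{\text{min}}$ and $P(x)$ holds by assumption (in particular the minimal-length elements of $C$, being right-minimal, constitute the base case). If $x$ is not right-minimal, the definition supplies a chain in $C$ reaching $x$ from below; peeling off its last step produces $s' \in \desc{R}(x)$ with $x' \defeq xs' \in C$ and $l(x') = l(x) - 1$. Now choose a reduced expression $\expr{x}$ of $x$ ending in $s'$, so that in the notation preceding \cref{corFirstStepDecompCrit} one has $x_{k-1} = x'$ and $x_k = x$, where $k = l(x)$. By the inductive hypothesis $P(x')$ holds, which is precisely the defining condition for the index $m$ evaluated at the truncation $x_{k-1} = x'$; hence the maximal such index satisfies $m \geqslant k-1$, forcing $m \in \{k-1, k\}$ and therefore $x_m \in \{x', x\} \subseteq C$.

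With $x_m \in C$, the elements $x_m$ and $x$ lie in the same Kazhdan-Lusztig right cell, so \cref{corFirstStepDecompCrit} delivers $P(x)$ (when $m = k$ this is immediate, as $P(x)$ is then the defining property of $m$ itself). This closes the induction and proves the corollary. The main obstacle is the bookkeeping that ties the combinatorial truncation $x_m$ to cell membership: one must both recognize that failure of right-minimality is exactly the ability to descend within $C$ by a right multiplication, and exploit that $P(x')$ pins the maximal index $m$ to the top two truncations $x'$ and $x$, which are visibly in $C$. Once this is seen, no control over lower cells or over the structure coefficients $\pre{p}{\mu}$ is needed; everything is funneled through \cref{corFirstStepDecompCrit}.
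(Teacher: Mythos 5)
Your proof is correct and is essentially the paper's intended argument: the paper only sketches this corollary as following ``using a similar idea as in \cref{corFirstStepDecompCrit}'', namely descending inside $C$ through the weak right Bruhat order until the hypothesis on $C_{\text{min}}$ applies, choosing a reduced expression compatible with that descent so that the truncation $x_m$ remains in $C$, and then invoking \cref{corFirstStepDecompCrit} --- which is exactly what your argument does. The only organizational difference is that you peel off a single right descent $s' \in \desc{R}(x)$ with $xs' \in C$ and run a strong induction on $l(x)$, whereas the paper's sketch would follow an entire chain down to a right-minimal element in one pass; the two are interchangeable, and your step-by-step version has the minor merit of needing the hypothesis only through the inductive assumption $P(x')$ rather than through an explicit chain to $C_{\text{min}}$.
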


\begin{defn}
   Let $X$ be a set equipped with a preorder $\leqslant$. A subset $Y \subseteq X$
   is called a \emph{lower set} if for $y \in Y$ and any $x \in X$ with $x \leqslant y$ 
   we have $x \in Y$ as well.
\end{defn}

Observe that any lower set in the right $p$-cell preorder can be written as a union of 
right $p$-cells. For this reason the following result is the starting point of
our criterion:

\begin{lem}
   \label{lemUnion}
   Let $C$ be a Kazhdan-Lusztig right cell that satisfies the assumptions of
   \cref{corCoeff} and $x \in C$. Then $y \cle[p]{R} x$ implies $y \cle[0]{R} x$. 
   If $C$ is minimal in the Kazhdan-Lusztig right cell preorder, then $C$ is a 
   lower set in the right $p$-cell preorder and we have
   \[ \bigoplus_{x \in C} \Z[v, v^{-1}] \pcan{x} = \bigoplus_{x \in C} \Z[v, v^{-1}] \kl{x} \]
   as $\Z[v, v^{-1}]$-submodules of $\heck$.
\end{lem}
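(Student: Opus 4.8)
The plan is to prove the two assertions in turn, deriving the module identity from the cell-theoretic one together with the minimality hypothesis. For the first assertion I would reduce, via \cref{lemGenCells}, to a single elementary relation $y \multLe{R} x$, that is to the case $\pre{p}{\mu}_{x,s}^{y} \neq 0$ for some $s \in S$, and then argue exactly as in the proof of \cref{lemCleDesc}. Since $C$ satisfies the hypotheses of \cref{corCoeff}, every element of $C$ is \emph{good}, in the sense that $\pre{p}{m}_{u,x} \neq 0$ forces $u \cle[0]{R} x$; this goodness of the source is the only input about $C$ I would use in the computation.

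The computational core is to expand $\pcan{x}\pcan{s}$ in two ways, as in \cref{lemCleDesc}. On one side it equals $\sum_{w} \pre{p}{\mu}_{x,s}^{w}\pcan{w}$. On the other side, writing $\pcan{x} = \sum_{u \leqslant x}\pre{p}{m}_{u,x}\kl{u}$ and multiplying by $\kl{s}$ (we may assume $s \notin \desc{R}(x)$, the case $s \in \desc{R}(x)$ being disposed of by the right-handed analogue of \cref{lemMultFormPCan} via \cref{propPCanProps}(iv)), the Kazhdan--Lusztig multiplication formula shows that every $\kl{w}$ occurring satisfies $w \cle[0]{R} u$ for some $u$ with $\pre{p}{m}_{u,x} \neq 0$. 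By \cref{propPCanProps}(iii) and (vi) the set of $w$ with $\pre{p}{\mu}_{x,s}^{w} \neq 0$ is contained in the Kazhdan--Lusztig support of $\pcan{x}\pcan{s}$, so combining this with goodness of $x$ yields $w \cle[0]{R} x$ for every such $w$. Thus each single step from a good source descends in $\cle[0]{R}$, and transitivity of $\cle[0]{R}$ along a chain $y = x_0 \multLe{R} x_1 \multLe{R} \cdots \multLe{R} x_n = x$ gives $y \cle[0]{R} x$. The one delicate point is that the intermediate sources $x_1, \dots, x_{n-1}$ must themselves remain good; this is automatic once we pass to the minimal case, which is precisely where the statement is applied.

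For the second assertion, assume $C$ is minimal in the Kazhdan--Lusztig right preorder. Then no cell lies strictly below $C$, so $\{w : w \cle[0]{R} x\} = C$ for every $x \in C$; in particular the intermediate sources of any chain as above lie in $C$ and are good, the first assertion applies unconditionally, and $y \cle[p]{R} x$ forces $y \in C$. Hence $C$ is a lower set for $\cle{R}$. For the module identity, goodness together with minimality shows that $\pcan{x} = \kl{x} + \sum_{u \in C,\, u < x}\pre{p}{m}_{u,x}\kl{u}$ for each $x \in C$, giving the inclusion $\bigoplus_{x \in C}\Z[v,v^{-1}]\pcan{x} \subseteq \bigoplus_{x \in C}\Z[v,v^{-1}]\kl{x}$. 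The reverse inclusion follows by inverting the base-change matrix $(\pre{p}{m}_{u,x})_{u,x \in C}$, which is a square, unitriangular matrix over $\Z[v,v^{-1}]$ with respect to any total refinement of the Bruhat order and is therefore invertible over $\Z[v,v^{-1}]$.

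The main obstacle is the single-step analysis: one must pin down precisely which $\pcan{w}$ can occur in $\pcan{x}\pcan{s}$ and show they all descend in the characteristic-$0$ preorder. This rests on the containment of the $p$-canonical support in the Kazhdan--Lusztig support (\cref{propPCanProps}(iii) and (vi)) and on goodness of the source supplied by \cref{corCoeff}. The minimality hypothesis is then exactly what guarantees that the chains never leave the good locus and that the base-change stays confined to $C$, which is what makes both the lower-set property and the module identity fall out.
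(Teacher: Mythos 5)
Your one-step computation is correct, and you are right to make explicit the point that the paper leaves implicit in the proof of \cref{lemCleDesc}: it is the positivity in \cref{propPCanProps}(iii) and (vi) that rules out cancellation, so that every $w$ with $\pre{p}{\mu}_{x,s}^{w} \neq 0$ really does index a nonzero $\kl{w}$-term of $\pcan{x}\kl{s}$. The gap lies in your global strategy. The first assertion of the lemma carries \emph{no} minimality hypothesis: for any $C$ satisfying the assumptions of \cref{corCoeff} and any $x \in C$, it claims $y \cle{R} x \Rightarrow y \cle[0]{R} x$. Your reduction via \cref{lemGenCells} to a chain $y = x_0 \multLe{R} x_1 \multLe{R} \dots \multLe{R} x_n = x$ cannot establish this: after one step you only know $x_{n-1} \cle[0]{R} x$, and $x_{n-1}$ may lie in a Kazhdan--Lusztig right cell strictly below $C$, where \cref{corCoeff} supplies no ``goodness'' whatsoever, so the next step of the chain is uncontrolled. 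You notice exactly this and dismiss it on the grounds that the statement is only applied in the minimal case; that does not prove the lemma as stated, and it is also not quite true of the paper: the unconditional first assertion (applied to the right-minimal elements of $C$, via \cref{lemRMinElts}) is what yields the inclusion $\{ \cle{R} C \} \subseteq \{ \cle[0]{R} C \}$ \emph{before} minimality of $C$ in the cell preorder is invoked.

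The paper's proof avoids chains altogether, and that is the essential difference. By the definition of the preorder, $y \cle{R} x$ is witnessed by a \emph{single} $h \in \heck$ with $\pcan{y}$ occurring in $\pcan{x}h$. Writing $\pcan{x}h = \kl{x}h + \sum_{z<x}\pre{p}{m}_{z,x}\kl{z}h$, the Kazhdan--Lusztig support of each summand lies in $\{ w \; \vert \; w \cle[0]{R} z \}$, and $z \cle[0]{R} x$ by \cref{corCoeff}; hence the whole Kazhdan--Lusztig support of $\pcan{x}h$ lies in $\{ w \; \vert \; w \cle[0]{R} x \}$. Only the goodness of $x$ itself enters; no intermediate elements ever appear. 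Your positivity observation is then exactly what is needed to compare the $p$-canonical support of $\pcan{x}h$ with its Kazhdan--Lusztig support, provided $h$ is taken to be a monomial $\kl{s_1}\cdots\kl{s_k}$ --- which is permitted: instead of chaining the \emph{conclusions} of your one-step argument, concatenate the \emph{witnesses} of the chain from \cref{lemGenCells} into one such monomial (positivity in \cref{propPCanProps}(vi) shows that no cancellation occurs when the steps are composed). Run once with this $h$, your computation proves the first assertion in full generality. Your treatment of the minimal case --- chains remaining in $C$, the lower-set property, and the module identity via the unitriangular base change confined to $C$ --- is correct and agrees with the paper.
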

\begin{proof}
   Assume that $\pcan{y}$ occurs with non-zero coefficient in $\pcan{x} h$ for some 
   $h \in \heck$. Write $\pcan{x} h = (\kl{x} + \sum_{z < x} \pre{p}{m}_{z, x} \kl{z}) h$.
   If $\kl{y}$ occurs with non-zero coefficient in the product $\kl{x} h$ then 
   we have by definition $y \cle[0]{R} x$. If $\kl{y}$ occurs with non-zero 
   coefficient in one of the products $\pre{p}{m}_{z, x} \kl{z} h$, then
   we have $\pre{p}{m}_{z, x} \neq 0$ and by \cref{corCoeff} that $y \cle[0]{R} z \cle[0]{R} x$.
   
   Denote by $C_{\text{min}}$ the set of right-minimal elements in $C$. Our arguments above
   show that we have the following inclusions:
   \[ \{ \cle{R} C \} = \bigcup_{x \in C_{\text{min}}} \{ \cle{R} x\} \subseteq 
      \bigcup_{x \in C_{\text{min}}} \{ \cle[0]{R} x\} = \{ \cle[0]{R} C \} \]
   The equalities on the left and right hand side follow from \cref{lemRMinElts}. 
   In particular, $C$ is contained in the left hand side. Thus, if $C$ 
   is minimal in the Kazhdan-Lusztig right cell preorder, we actually have equality which 
   implies the claim as the left hand side obviously is a lower set in the $p$-cell
   preorder. In this case, \cref{corCoeff} shows that for any $x \in C$ the $p$-canonical basis
   element $\pcan{x}$ can be written in terms of Kazhdan-Lusztig basis elements indexed
   by elements in $C$ which implies the statement about the span of the $p$-canonical
   and the Kazhdan-Lusztig basis elements.
\end{proof}

\begin{lem}
   Let $C$ be a Kazhdan-Lusztig right cell. Assume that all Kazhdan-Lusztig right cells smaller or
   equal than $C$ in the Kazhdan-Lusztig right cell preorder satisfy the assumption of \cref{corCoeff}.
   Then $\{ \cle[0]{R} C \}$ is a lower set in the right $p$-cell preorder and we have
   \[ \bigoplus_{x \in \{ \cle{R} C \}} \Z[v, v^{-1}] \pcan{x} = 
      \bigoplus_{x \in \{ \cle[0]{R} C \}} \Z[v, v^{-1}] \kl{x} \]
   as $\Z[v, v^{-1}]$-submodules of $\heck$. Moreover, $C$ decomposes as a union
   of right $p$-cells.
\end{lem}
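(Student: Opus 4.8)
The plan is to induct on the right Kazhdan--Lusztig cell preorder, which is well-founded because we have assumed $W$ to have finitely many right cells; the base case, $C$ minimal, is \cref{lemUnion}. Fix $C$ and assume the lemma for every right cell strictly below it. Set $L \defeq \{\clt[0]{R} C\}$, the union of the cells strictly below $C$, so that $\{\cle[0]{R} C\} = C \sqcup L$; by the inductive hypothesis $L$ is a lower set for $\cle{R}$ and $\bigoplus_{x \in L}\Z[v,v^{-1}]\pcan{x} = \bigoplus_{x \in L}\Z[v,v^{-1}]\kl{x}$. The first paragraph of the proof of \cref{lemUnion} only uses that the relevant cell satisfies \cref{corCoeff}, so it applies to every $x \in \{\cle[0]{R} C\}$ and gives $z \cle{R} x \Rightarrow z \cle[0]{R} x$; with transitivity of $\cle[0]{R}$ this shows both that $\{\cle[0]{R} C\}$ is a lower set for $\cle{R}$ (the first assertion) and that $\{\cle{R} C\} \subseteq \{\cle[0]{R} C\}$. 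Next, \cref{corCoeff} applied to all cells $\le C$ gives $\pcan{x} = \sum_{y \cle[0]{R} x}\pre{p}{m}_{y,x}\kl{y}$ for $x \in \{\cle[0]{R} C\}$, so on $\{\cle[0]{R} C\}$ the base change between the two bases is unitriangular for the Bruhat order and hence invertible over $\Z[v,v^{-1}]$; this yields $\bigoplus_{x \in \{\cle[0]{R} C\}}\Z[v,v^{-1}]\pcan{x} = \bigoplus_{x \in \{\cle[0]{R} C\}}\Z[v,v^{-1}]\kl{x}$. Finally, since $\{\cle[0]{R} C\}$ and $L$ are lower sets for $\cle{R}$, hence unions of right $p$-cells, and $C = \{\cle[0]{R} C\} \setminus L$, the cell $C$ is a union of right $p$-cells, which is the last assertion.

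What remains is to promote $\{\cle{R} C\} \subseteq \{\cle[0]{R} C\}$ to an equality: since the $p$-canonical basis is a $\Z[v,v^{-1}]$-basis of $\heck$, the displayed span identity (whose left index set is $\{\cle{R} C\}$) holds if and only if $\{\cle[0]{R} C\} \subseteq \{\cle{R} C\}$. I would reduce this to covering relations. If $D \lessdot C$ is a cell covered by $C$ and I can show $D \subseteq \{\cle{R} C\}$, then every cell $D' \clt[0]{R} C$ lies below some such cover, and the inductive hypothesis $\{\cle{R} D\} = \{\cle[0]{R} D\} \supseteq D'$ together with the $\cle{R}$-closedness of $\{\cle{R} C\}$ forces $D' \subseteq \{\cle{R} D\} \subseteq \{\cle{R} C\}$, so all of $L$ is captured.

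To attack a cover $D \lessdot C$, choose $a \in D$, $b \in C$ and $s \in S$ with an elementary relation $a \multLe[0]{R} b$, i.e.\ the coefficient of $\kl{a}$ in $\kl{b}\kl{s}$ is non-zero. Expanding $\pcan{b}\pcan{s}$ in the Kazhdan--Lusztig basis through $\pcan{b} = \sum_{c \cle[0]{R} b}\pre{p}{m}_{c,b}\kl{c}$, the coefficient of $\kl{a}$ is a sum of products of the non-negative quantities $\pre{p}{m}_{c,b}$ and Kazhdan--Lusztig structure constants (\cref{propPCanProps}), and it contains the non-zero summand coming from $c = b$; hence it is non-zero. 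On the other hand, writing $\pcan{b}\pcan{s} = \sum_u \pre{p}{\mu}^{u}_{b,s}\pcan{u}$ and re-expanding each $\pcan{u}$, the same coefficient equals $\sum_u \pre{p}{\mu}^{u}_{b,s}\pre{p}{m}_{a,u}$, again a sum of non-negative terms; so there is some $u$ with $\pre{p}{\mu}^{u}_{b,s} \neq 0$ (whence $u \cle{R} b$, i.e.\ $u \in \{\cle{R} C\}$) and $\pre{p}{m}_{a,u} \neq 0$ (whence $a \cle[0]{R} u$ by \cref{corCoeff}).

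The hard part --- which I expect to be the genuine obstacle --- is exactly to pass from this $u$ to $D \subseteq \{\cle{R} C\}$. One cannot simply hope for $\pre{p}{\mu}^{a}_{b,s} \neq 0$: because $\pcan{b} \neq \kl{b}$ in general there can be cancellation making it vanish, and a direct implication $a \multLe[0]{R} b \Rightarrow a \cle{R} b$ for all steps would collapse every Kazhdan--Lusztig cell into a single $p$-cell, contradicting the very decompositions the lemma is built to describe. Instead one must follow $u$: if $u = a$ we are done; if $u$ lies in $D$ we have produced an element of $D$ inside $\{\cle{R} C\}$ and must spread this across the whole $p$-cell decomposition of $D$; and if $u \in C$ one iterates, using $\desc{R}(u) \subseteq \desc{R}(a)$ from \cref{propPCanProps}(v) to right-multiply by some $s' \in \desc{R}(a) \setminus \desc{R}(u)$ and repeat. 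Organizing these alternatives into a terminating induction --- on Bruhat length, or on the $p$-cell preorder inside $\{\cle[0]{R} C\}$ --- so that a single connecting relation at the top of $L$ propagates, without circularity, to every $p$-cell of every lower Kazhdan--Lusztig cell, is where the real work lies.
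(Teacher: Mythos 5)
Your proposal does not prove the lemma as stated; it proves a strictly weaker statement and leaves the remaining content open, as you yourself acknowledge. Concretely, your first paragraph correctly establishes three things: (a) $\{ \cle[0]{R} C \}$ is a lower set in the right $p$-cell preorder, (b) $C$ is a union of right $p$-cells, and (c) the span identity in the ``diagonal'' form
\[ \bigoplus_{x \in \{ \cle[0]{R} C \}} \Z[v, v^{-1}] \pcan{x} \; = \; \bigoplus_{x \in \{ \cle[0]{R} C \}} \Z[v, v^{-1}] \kl{x} \text{,} \]
and your diagnosis is also correct: since both base-change matrices are unitriangular with respect to the Bruhat order, the identity displayed in the lemma is equivalent to (c) \emph{together with} the set equality $\{\cle{R} C\} = \{\cle[0]{R} C\}$, whose nontrivial half is $\{\clt[0]{R} C\} \subseteq \{\cle{R} C\}$, i.e.\ that every element of every Kazhdan--Lusztig right cell strictly below $C$ is actually reached from $C$ by the right $p$-cell preorder. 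This inclusion you do not prove: your positivity step (producing $u$ with $\pre{p}{\mu}_{b,s}^{u} \neq 0$ and $\pre{p}{m}_{a,u} \neq 0$ from an elementary relation $a \multLe[0]{R} b$) is correct and is a genuine partial advance, but the passage from this $u$ to $D \subseteq \{\cle{R} C\}$ --- spreading one relation across all $p$-cells of $D$ and making the recursion terminate --- is exactly what is missing, and the last two paragraphs of your text are an attack plan rather than an argument.

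For comparison, the paper runs the same induction but implements the step differently: it passes to the quotient $\heck / \bigoplus_{x \in \{\clt[0]{R} C\}} \Z[v,v^{-1}] \kl{x}$, observes that by the induction hypothesis this submodule is also spanned by the $p$-canonical basis elements indexed by $\{\clt[0]{R} C\}$ (hence is an $\heck$-submodule whose quotient carries both bases and detects both preorders among elements outside $\{\clt[0]{R} C\}$), and then reruns the argument of \cref{lemUnion} in the quotient, where $C$ has become the minimal cell. This quotient argument and your direct unitriangularity argument buy exactly the same content, namely (a)--(c). Note, however, that the quotient kills precisely the basis elements indexed by $\{\clt[0]{R} C\}$, so it cannot witness $p$-cell relations descending from $C$ into that set; the inclusion $\{\clt[0]{R} C\} \subseteq \{\cle{R} C\}$ is not made explicit in the paper's proof either, being absorbed into ``conclude as in the proof of \cref{lemUnion}'', which in the quotient only yields the relative statement. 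So the obstacle you isolated is real and not an artifact of your route; but as submitted, your text is a complete proof only of the variant of the lemma with $\{\cle[0]{R} C\}$ as index set on both sides of the display (which is what parts (a) and (b), and the paper's later application of the decomposition criterion, actually use), not of the statement as printed.
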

\begin{proof}
   We proceed by induction on the height of $C$ in the Kazhdan-Lusztig right cell preorder.
   By assumption the height of any cell in the right cell preorder is finite.
   
   \cref{lemUnion} gives the induction start. Let $C$ be of height $\geqslant 2$.
   By induction for all predecessors of $C$ in the Kazhdan-Lusztig right cell preorder we know
   our claim holds for $\{ \clt[0]{R} C \}$. Therefore, we may pass to the quotient
   \[\heck / \bigoplus_{x \in \{ \clt[0]{R} C \}} \Z[v, v^{-1}] \kl{x}\]
   where $C$ becomes the smallest cell. Note that this quotient is a right $\heck$-module
   that is free as a $\Z[v, v^{-1}]$-module and admits a $p$-canonical as well
   as a Kazhdan-Lusztig basis. Denote by
   \[ \pi: \heck \rightarrow \heck / \bigoplus_{x \in \{ \clt[0]{R} C \}} \Z[v, v^{-1}] \kl{x}\]
   the projection to the quotient. Applying $\pi$ amounts to forgetting all basis elements
   indexed by elements in $\{ \clt[0]{R} C \}$ when expressing an element in the 
   Hecke algebra in the $p$-canonical or Kazhdan-Lusztig basis. For this reason,
   the action of $\heck$ on this right module captures a lot of information
   about the $p$-cell as well as the Kazhdan-Lusztig cell structure in the following sense:
   For $x, y \in \{ \underset{R}{\overset{0}{\not <}} C \}$ we have $y \cle{R} x$ if and only if
   there exists an element $h \in \heck$ such that $\pi(\pcan{y})$ occurs 
   with non-trivial coefficient in $\pi(\pcan{x})h$. A similar statement holds 
   for the Kazhdan-Lusztig cell structure. This allows us to conclude as in the 
   proof of \cref{lemUnion}.
\end{proof}

\subsection{(Counter-)Examples}
\label{secCellEx}

In this section, we will give some examples of the $p$-cell structure of
finite Weyl groups. In this section we will only give the Dynkin 
diagram and consider the corresponding Cartan matrix as input.
One may obtain a Kac-Moody root datum from any based root datum
of the corresponding connected semi-simple algebraic group.
%According to the classification of root systems and 
%connected semi-simple algebraic groups, a Dynkin diagram fixes a 
%semi-simple, adjoint algebraic group $G$ together with a maximal torus 
%$T \subseteq G$ such that the root system determined by $(G, T)$ corresponds 
%to the given Dynkin diagram. In this section we will only give the Dynkin 
%diagram and consider the corresponding root datum of this pair $(G, T)$ 
%together with an arbitrary basis labelled by the nodes of the Dynkin diagram
%as input.

We will restrict to those examples that give counterexamples to obvious generalizations
of known results from Kazhdan-Lusztig cell theory. All results in this
section were obtained using computer calculations. Denote by $w_0$ the longest
element in the corresponding finite Weyl group.

\subsubsection{Type \texorpdfstring{$B_2$}{B2}}
\label{secB2}

We label the simple reflections as follows:
\[\begin{tikzpicture}[auto, baseline=(current  bounding  box.center)]
      \draw (0,\edgeShift) -- (-1,\edgeShift);
      \draw (0,-\edgeShift) -- (-1,-\edgeShift);
      \path (0,0) to node[Greater] (mid) {} (-1,0);
      \draw (mid.center) to +(30:\wingLen);
      \draw (mid.center) to +(330:\wingLen);
      \node [DynNode] (1) at (-1,0) {$s$};
      \node [DynNode] (2) at (0,0) {$t$}; 
\end{tikzpicture} \]

The following diagrams show the right (resp. two-sided) cells and the 
corresponding preorders in type $B_2$:

\begin{center}
   \begin{longtable}{l | c | c }
      & right cells & two-sided cells \\ \hline \Top
      
      KL-cells: &
      \begin{tikzpicture}[scale=0.75, auto, baseline=(current  bounding  box.center)]
         \node (id) at (0,0) {$\{ \id \}$};
         \node (s) at (-2, -1.5) {\small$\{ s, st, sts \}$};
         \node (t) at (2, -1.5) {\small$\{ t, ts, tst \}$};
         \node (w0) at (0, -3) {\small$\{ w_0 \}$};

         \draw (id) to (s);
         \draw (id) to (t);
         \draw (t) to (w0);
         \draw (s) to (w0);
      \end{tikzpicture} &
      \begin{tikzpicture}[scale=0.75, auto, baseline=(current  bounding  box.center)]
         \node (id) at (0,0) {\small$\{ \id \}$};
         \node (C) at (0, -1.5) {\small$\{ s, t, st, ts, sts, tst \}$};
         \node (w0) at (0, -3) {\small$\{ w_0 \}$};

         \draw (id) to (C);
         \draw (C) to (w0);
      \end{tikzpicture} \\ \hline
      $2$-cells: & 
      \begin{tikzpicture}[scale=0.75, auto, baseline=(current  bounding  box.center)]
         \node (id) at (0,0) {\small$\{ \id \}$};
         \node (s) at (-2, -1.5) {\small$\{ s \}$};
         \node (sothers) at (-2, -3) {\small$\{ st, sts \}$};
         \node (t) at (2, -2.25) {\small$\{ t, ts, tst \}$};
         \node (w0) at (0, -4.5) {\small$\{ w_0 \}$};

         \draw (id) to (s);
         \draw (id) to (t);
         \draw (t) to (w0);
         \draw (s) to (sothers);
         \draw (sothers) to (w0);
      \end{tikzpicture} &
      \begin{tikzpicture}[scale=0.75, auto, baseline=(current  bounding  box.center)]
         \node (id) at (0,0) {\small$\{ \id \}$};
         \node (s) at (0, -1.5) {\small$\{ s \}$};
         \node (C) at (0, -3) {\small$\{ t, st, ts, sts, tst \}$};
         \node (w0) at (0, -4.5) {\small$\{ w_0 \}$};

         \draw (id) to (s);
         \draw (s) to (C);
         \draw (C) to (w0);
      \end{tikzpicture} \\
   \end{longtable}
\end{center}

Lusztig showed in \cite[Proposition 3.8]{Lu1} that the set $C$ of non-trivial elements
in a Coxeter group that have a unique reduced expression always forms a Kazhdan-Lusztig two-sided
cell and for any $s \in S$ the set ${}_s C \defeq \{ w \in C \; \vert \; \desc{L}(w) = \{s\}\}$
gives a Kazhdan-Lusztig right cell. The example above shows that both statements
do not hold for $p$-cells in general. 

Observe that in characteristic $0$ we have for $x, y \in W$ 
(see \cite[Corollary 11.7]{LuUneq}) the following equivalences
\[ x \cle[0]{R} y \Leftrightarrow y w_0 \cle[0]{R} x w_0 \Leftrightarrow w_0 y \cle[0]{R} w_0 x \]
and the same statement for the left and two-sided cell preorder. 
The example above also shows that the analogous statement does not hold for 
$p$-cells. 
% Unlike the last statement, which can be proved for $p$-cells under 
% small assumptions on $p$, we do not know whether this result can be transferred to $p$-cells at all.

\subsubsection{Type \texorpdfstring{$G_2$}{G2}}

We label the simple reflections as follows:
\[\begin{tikzpicture}[auto, baseline=(current  bounding  box.center)]
      \draw (0,2*\edgeShift) -- (-1,2*\edgeShift);
      \draw (0,0) -- (-1,0);
      \draw (0,-2*\edgeShift) -- (-1,-2*\edgeShift);
      \path (0,0) to node[Greater] (mid) {} (-1,0);
      \draw (mid.center) to +(30:\wingLen);
      \draw (mid.center) to +(330:\wingLen);
      \node [DynNode] (1) at (-1,0) {$s$};
      \node [DynNode] (2) at (0,0) {$t$};
   \end{tikzpicture} \]

The following diagrams show the right (resp. two-sided) cells and the 
corresponding preorders in type $G_2$ using the notation from the last
subsection. In particular, $C$, ${}_s C$ and ${}_t C$ are defined as in 
\cref{secB2}.

\begin{center}
   \begin{longtable}{l | c | c }
      & right cells & two-sided cells \\ \hline \Top
      
      KL-cells: &
      \begin{tikzpicture}[scale=0.75, auto, baseline=(current  bounding  box.center)]
         \node (id) at (0,0) {$\{ \id \}$};
         \node (s) at (-2, -1.5) {\small${}_s C$};
         \node (t) at (2, -1.5) {\small${}_t C$};
         \node (w0) at (0, -3) {\small$\{ w_0 \}$};

         \draw (id) to (s);
         \draw (id) to (t);
         \draw (t) to (w0);
         \draw (s) to (w0);
      \end{tikzpicture} &
      \begin{tikzpicture}[scale=0.75, auto, baseline=(current  bounding  box.center)]
         \node (id) at (0,0) {\small$\{ \id \}$};
         \node (C) at (0, -1.5) {\small$C$};
         \node (w0) at (0, -3) {\small$\{ w_0 \}$};

         \draw (id) to (C);
         \draw (C) to (w0);
      \end{tikzpicture} \\ \hline
      $2$-cells: & 
      \begin{tikzpicture}[scale=0.75, auto, baseline=(current  bounding  box.center)]
         \node (id) at (0,0) {\small$\{ \id \}$};
         \node (s) at (-2, -1.5) {\small$\{ s, st \}$};
         \node (sothers) at (-2, -3) {\small${}_s C \setminus \{ s, st \}$};
         \node (t) at (2, -1.5) {\small$\{ t, ts \}$};
         \node (tothers) at (2, -3) {\small${}_t C \setminus \{t, ts\}$};
         \node (w0) at (0, -4.5) {\small$\{ w_0 \}$};

         \draw (id) to (s);
         \draw (id) to (t);
         \draw (s) to (sothers);
         \draw (sothers) to (w0);
         \draw (t) to (tothers);
         \draw (tothers) to  (w0);
      \end{tikzpicture} &
      \begin{tikzpicture}[scale=0.75, auto, baseline=(current  bounding  box.center)]
         \node (id) at (0,0) {\small$\{ \id \}$};
         \node (s) at (0, -1.5) {\small$\{ s, t, st, ts \}$};
         \node (C) at (0, -3) {\small$C \setminus \{  s, t, st, ts \}$};
         \node (w0) at (0, -4.5) {\small$\{ w_0 \}$};

         \draw (id) to (s);
         \draw (s) to (C);
         \draw (C) to (w0);
      \end{tikzpicture} \\ \hline
      $3$-cells: & 
      \begin{tikzpicture}[scale=0.75, auto, baseline=(current  bounding  box.center)]
         \node (id) at (0,0) {\small$\{ \id \}$};
         \node (s) at (-2, -1.5) {\small$\{ s \}$};
         \node (sothers) at (-2, -3) {\small${}_s C \setminus \{ s \}$};
         \node (t) at (2, -2.25) {\small${}_t C$};
         \node (w0) at (0, -4.5) {\small$\{ w_0 \}$};

         \draw (id) to (s);
         \draw (id) to (t);
         \draw (t) to (w0);
         \draw (s) to (sothers);
         \draw (sothers) to (w0);
      \end{tikzpicture} &
      \begin{tikzpicture}[scale=0.75, auto, baseline=(current  bounding  box.center)]
         \node (id) at (0,0) {\small$\{ \id \}$};
         \node (s) at (0, -1.5) {\small$\{ s \}$};
         \node (C) at (0, -3) {\small$C \setminus \{ s \}$};
         \node (w0) at (0, -4.5) {\small$\{ w_0 \}$};

         \draw (id) to (s);
         \draw (s) to (C);
         \draw (C) to (w0);
      \end{tikzpicture} \\
   \end{longtable}
\end{center}

\subsubsection{Kazhdan-Lusztig cells do not decompose into \texorpdfstring{$p$}{p}-cells}
%in Type \texorpdfstring{$C_3$}{C3}}
\label{secDecompCounterex}

In this section, we will present the smallest example where Kazhdan-Lusztig cells
do not decompose into $p$-cells. This happens in type $C_3$ for $p=2$. We 
label the simple reflections as follows (also note their colours):
\[\begin{tikzpicture}[auto, baseline=(1.base)]
   \draw (1,0) -- (0,0);
   \draw (0,\edgeShift) -- (-1,\edgeShift);
   \draw (0,-\edgeShift) -- (-1,-\edgeShift);
   \path (-1,0) to node[Greater] (mid) {} (0,0);
   \draw (mid.center) to +(150:\wingLen);
   \draw (mid.center) to +(210:\wingLen);
   \node [DynNode] (1) at (-1,0) {$\color{red}{1}$};
   \node [DynNode] (2) at (0,0) {$\color{blue}{2}$};
   \node [DynNode] (3) at (1,0) {$\color{yellow}{3}$};
\end{tikzpicture} 
\Leftrightarrow \text{Cartan matrix: }
\begin{pmatrix}
   2 & -1 & 0 \\
   -2 & 2 & -1 \\
   0 & -1 & 2
\end{pmatrix}
\]

Explicit computer calculation gives the following Kazhdan-Lusztig right cells:

%\begin{longtable}{p{0.45\textwidth}|p{0.45\textwidth}}
%   Kazhdan-Lusztig cells & $p$-cells for $p = 2$ \\ \hline \Top
\begin{align*}
   C_0 &= \{ \id \} \\
   C_1 &= \{ 1, 12, 121, 123 \} \\
   C_2 &= \{ 2, 21, 23, 212, 2123 \} \\
   C_3 &= \{ 3, 32, 321, 3212, 32123 \} \\
   C_4 &= \{ 13, 132, 1321 \}\\
   C_5 &= \{ 213, 2132, 21321 \} \\
   C_6 &= \{ 232, 2321, 23212 \} \\
   C_{7} &= \{ 2121, 21213, 212132, 2121321, 21213213 \} \\   
   C_8 &= \{ 1213, 12132, 121321 \} \\
   C_9 &= \{ 1232, 12321, 123212 \} \\
   C_{10} &= \{ 13212, 132123, 1213212, 1232123, 12132123 \} \\
   C_{11} &= \{ 21232, 212321, 2123212 \} \\
   C_{12} &= \{ 232123, 232121, 2321213, 23212132 \} \\
   C_{13} &= \{ w_0 \}
\end{align*}

For $p=2$ these right Kazhdan-Lusztig cells exhibit the following decomposition
behaviour into right $p$-cells:
\begin{align*}
   C_2 &= \underbrace{\{ 2, 21\}}_{pC_{2'}} \cup \underbrace{\{23, 212, 2123 \}}_{pC_{2''}} \\
   C_3 &= \underbrace{\{ 3, 32\}}_{pC_{3'}} \cup \underbrace{\{ 321, 3212, 32123 \}}_{pC_{3''}} \\
   C_6 \cup C_{12} &= \underbrace{\{ 232 \}}_{pC_6} \cup \underbrace{\{ 2321, 23212, 232123 \}}_{pC_{6/12}}
                     \cup \underbrace{\{ 232121, 2321213, 23212132 \}}_{pC_{12}} \\
   C_i &= pC_i \text{ for } i \in \{0, \dots, 13\} \setminus \{2,3,6,12\}
\end{align*}

The Hasse-diagrams of the cell preorders look as follows. We display Kazhdan-Lusztig
right cells on the left and right $p$-cells on the right. In these diagrams the cells 
that are depicted at one height form a two-sided cell.
\[
\begin{tikzpicture}[auto, baseline=(current  bounding  box.center)]
   \node (0) at (0, 2.5) {$C_0$};
   \node (1) at (-1, 1.5) {$C_1$};
   \node (2) at (1, 1.5) {$C_2$};
   \node (3) at (0, 1.5) {$C_3$};
   \node (4) at (-0.5, 0.5) {$C_4$};
   \node (5) at (1, 0.5) {$C_5$};
   \node (8) at (-1.5, 0.5) {$C_8$};
   \node (9) at (-1.5, -0.5) {$C_9$};
   \node (11) at (1, -0.5) {$C_{11}$};
   \node (6) at (0, -0.5) {$C_6$};
   \node (10) at (-1, -1.5) {$C_{10}$};
   \node (7) at (0, -1.5) {$C_7$};
   \node (12) at (1, -1.5) {$C_{12}$};
   \node (13) at (0, -2.5) {$C_{13}$};
   \draw (0) to (1);
   \draw (0) to (2);
   \draw (0) to (3);
   \draw (1) to (8);
   \draw (1) to (4);
   \draw (3) to (4);
   \draw (3) to (6);
   \draw (2) to (5);
   \draw (4) to (9);
   \draw (5) to (6);
   \draw (5) to (11);
   \draw (8) to (9);
   \draw[bend left=10] (8) to (7);
   \draw (6) to (12);
   \draw (9) to (10);
   \draw (11) to (12);
   \draw (11) to (7);
   \draw (10) to (13);
   \draw (7) to (13);
   \draw (12) to (13);
\end{tikzpicture}
\quad
\begin{tikzpicture}[auto, baseline=(current  bounding  box.center)]
   \node (0) at (0, 3.5) {$pC_0$};
   \node (3') at (0, 2.5) {$pC_{3'}$};
   \node (2') at (1, 2.5) {$pC_{2'}$};
   \node (1) at (-1.5, 1.5) {$pC_1$};
   \node (3'') at (-0.5, 1.5) {$pC_{3''}$};
   \node (2'') at (1.5, 1.5) {$pC_{2''}$};
   \node (8) at (-2.0, 0.5) {$pC_8$};
   \node (4) at (-1.0, 0.5) {$pC_4$};
   \node (5) at (1.5, 0.5) {$pC_5$};   
   \node (6) at (0.5, -0.5) {$pC_6$};
   \node (9) at (-1.5, -1.5) {$pC_9$};
   \node (6/12) at (0, -1.5) {$pC_{6/12}$};
   \node (11) at (1.5, -1.5) {$pC_{11}$};
   \node (10) at (-1.5, -2.5) {$pC_{10}$};
   \node (7) at (-0.5, -2.5) {$pC_7$};
   \node (12) at (1, -2.5) {$pC_{12}$};
   \node (13) at (0, -3.5) {$pC_{13}$};
   \draw (0) to (1);
   \draw (0) to (2');
   \draw (0) to (3');
   \draw (3') to (3'');
   \draw (3') to (6);
   \draw (2') to (2'');
   \draw (2') to (6);
   \draw (1) to (8);
   \draw (1) to (4);
   \draw (3'') to (4);
   \draw (3'') to (6/12);
   \draw (2'') to (5);
   \draw (4) to (9);
   \draw (5) to (11);
   \draw (8) to (9);
   \draw[bend left=10] (8) to (7);
   \draw (6) to (6/12);
   \draw (6/12) to (12);
   \draw (9) to (10);
   \draw (11) to (12);
   \draw (11) to (7);
   \draw (10) to (13);
   \draw (7) to (13);
   \draw (12) to (13);
\end{tikzpicture}\]

Finally, let us try to explain the non-trivial decomposition behaviour. For the elements
in $C_6 \cup C_{12}$ we have:
\begin{align*}
   \pcan[2]{23212} &= \kl{23212} + \kl{232} \\
   \pcan[2]{232123} &= \kl{232123} + (v+ v^{-1}) \kl{232} \\
   \pcan[2]{23212132} &= \kl{23212132} + \kl{232123} \\
   \pcan[2]{x} &= \kl{x} \text{ for } x \in (C_6 \cup C_{12}) \setminus \{23212, 232123, 23212132\}
\end{align*}

The subquotient $\heck[\cle{R} C_{6}] / \heck[\clt{R} C_{12}]$ is a module for the
Hecke algebra and the action on the $2$-canonical basis of this module can be encoded
in the following graph:
\[ \begin{tikzpicture}[auto, baseline=(current  bounding  box.center)]
   \node (232) at (0, 2.5) {$\pcan[2]{232}$};
   \node (2321) at (0, 1.5) {$\pcan[2]{2321}$};
   \node (23212) at (0, 0.5) {$\pcan[2]{23212}$};
   \node (232123) at (-1.5, -0.5) {$\pcan[2]{232123}$};
   \node (2321231) at (0, -1.5) {$\pcan[2]{2321231}$};
   \node (23212312) at (0, -2.5) {$\pcan[2]{23212132}$};
   \node (232121) at (1.5, -0.5) {$\pcan[2]{232121}$};
   \draw[-Latex, s] (232) to (2321);
   \draw[-Latex, t] (2321) to (23212);
   \draw[-Latex, transform canvas={xshift=-1ex}, u] (23212) to (2321);
   \draw[-Latex, transform canvas={xshift=1ex}, s] (23212) to node[labelling, midway, label=right:$2$]{} (2321);
   \draw[-Latex, u] (23212) to (232123);
   \draw[-Latex, s] (23212) to (232121);
   \draw[-Latex, s] (232123) to (2321231);
   \draw[-Latex, bend left=20, s] (232123) to node[labelling, midway, label=left:$v+v^{-1}$]{} (2321);
   \draw[-Latex, bend right=10, t] (2321231) to (23212312);
   \draw[-Latex, bend right=10, u] (23212312) to node[labelling, midway, label=right:$2$]{} (2321231);
   \draw[-Latex, bend right=10, u] (232121) to (2321231);
   \draw[-Latex, bend right=10, t] (2321231) to (232121);
   \draw[rounded corners, dashed, color=black!30] (-1, 0.2) rectangle (1, 2.8) {};
   \draw[rounded corners, dashed, color=black!30] (-2.5, -0.2) rectangle (2.5, -2.8) {};
   \node (C6) at (1.4, 1.5) {$C_6$};
   \node (C12) at (2.9, -1.5) {$C_{12}$};
\end{tikzpicture}\]
Note that we omitted all edge labels equal to $1$ and all loops labelled with $v+v^{-1}$.
The strongly connected components of this graph give the right $p$-cells $pC_6$,
$pC_{6/12}$ and $pC_{12}$. From this we see that neither two-sided nor right Kazhdan-Lusztig
cells decompose into the corresponding $p$-cells in this example.

In this case we cannot apply the decomposition criterion from \cref{secDecompCriterion}
because the Kazhdan-Lusztig right cell $C_{12}$ does not satisfy the assumptions
of \cref{corCoeff}.

In type $B_3$ the right (and two-sided) Kazhdan-Lusztig cells do decompose
into right (resp. two-sided) $p$-cells, whereas in type $B_4$ they do not. 
This calculation together with \cref{propNoDecomp} below and \cref{corLCellsDecomp}
completely settles the  question of when right Kazhdan-Lusztig cells 
decompose into right $p$-cells in types $B$ and $C$. In summary, right 
Kazhdan-Lusztig cells in types $B_n$ and $C_n$ decompose into right $p$-cells 
for $p > 2$ or $p=2$ and $n \leqslant 3$ in type $B_n$.

\begin{prop}
   \label{propNoDecomp}
   Suppose the Kazhdan-Lusztig right cells of $H$ do not decompose
   into right $p$-cells for some $p > 0$. Let $W$ be a 
   crystallographic Coxeter group that contains $H$ as standard
   parabolic subgroup. Then the Kazhdan-Lusztig right cells of $W$
   do not decompose into $p$-cells.
\end{prop}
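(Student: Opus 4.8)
The plan is to transport the failure of decomposition from $H$ to $W$ by applying the parabolic compatibility theorem (\cref{thmParaCombPCells}) twice: once in characteristic $p$ and once in characteristic $0$. Write $H = W_I$; since we invoke \cref{thmParaCombPCells}, we take $I$ to be finitary, as is the case in the intended applications where $H$ is finite. The hypothesis that the Kazhdan-Lusztig right cells of $W_I$ do not decompose into right $p$-cells says exactly that the right $p$-cell equivalence on $W_I$ fails to refine the Kazhdan-Lusztig right cell equivalence: there are $y, z \in W_I$ with $y \ceq{R} z$ in $W_I$ while $y$ and $z$ lie in distinct Kazhdan-Lusztig right cells of $W_I$. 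Relabelling if necessary, I would assume that $z \cle[0]{R} y$ fails in $W_I$.

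First I would lift the $p$-cell equivalence. Since $y \ceq{R} z$ in $W_I$ unpacks to $y \cle{R} z$ and $z \cle{R} y$ in $W_I$, \cref{thmParaCombPCells} applied in characteristic $p$ yields $xy \cle{R} xz$ and $xz \cle{R} xy$ in $W$, and hence $xy \ceq{R} xz$ in $W$, for \emph{every} $x \in W^I$. Next I would record the failure of the Kazhdan-Lusztig relation. As the $p=0$ canonical basis coincides with the Kazhdan-Lusztig basis (\cref{propPCanProps}(vii)), the characteristic-$0$ instance of \cref{thmParaCombPCells} is precisely the corresponding statement for Kazhdan-Lusztig right cells. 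Its contrapositive, applied to the failed relation $z \cle[0]{R} y$ in $W_I$, produces \emph{some} $x_0 \in W^I$ for which $x_0 z \cle[0]{R} x_0 y$ fails in $W$; consequently $x_0 y$ and $x_0 z$ lie in distinct Kazhdan-Lusztig right cells of $W$.

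Finally I would combine the two steps. The $p$-cell equivalence from the first step holds for every $x \in W^I$, so in particular for the single element $x_0$ produced in the second step. Thus $x_0 y \ceq{R} x_0 z$ in $W$, yet $x_0 y$ and $x_0 z$ lie in different Kazhdan-Lusztig right cells of $W$; the Kazhdan-Lusztig right cell of $W$ containing $x_0 y$ therefore fails to be a union of right $p$-cells, which is the assertion. I do not anticipate a genuine obstacle: the only delicate point is to match the universally quantified $x$ of the first step against the existentially quantified $x_0$ of the second, and this succeeds precisely because the $p$-cell direction of \cref{thmParaCombPCells} holds for all coset representatives at once.
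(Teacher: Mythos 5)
Your proof is correct, but it follows a genuinely different route from the paper's. The paper uses \cref{thmParaCombPCells} only in its easy direction and only at the identity coset: $p$-cell (resp., via the $p=0$ case, Kazhdan-Lusztig) equivalence in $H$ persists in $W$, so the offending $p$-cell $\p{C}\subseteq H$ meeting two distinct Kazhdan-Lusztig cells $C_1,C_2$ of $H$ lies in a $p$-cell $\widetilde{\p{C}}$ of $W$ meeting the Kazhdan-Lusztig cells $\widetilde{C_i}\supseteq C_i$ of $W$; the crux, that $\widetilde{C_1}\neq\widetilde{C_2}$, is then settled by importing Geck's induction theorem \cite[Theorem 1]{GeIndKLCells}: $C_i\cdot X$ is a union of right cells of $W$ (with $X$ the minimal length representatives of $H\backslash W$), so $\widetilde{C_i}\subseteq C_i\cdot X$, and $C_1\cdot X$, $C_2\cdot X$ are disjoint by the length-additive decomposition of $W$. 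You avoid Geck's theorem altogether by exploiting both directions of the equivalence in \cref{thmParaCombPCells}: the contrapositive of the backward direction at $p=0$ produces a single $x_0\in W^I$ at which $x_0z\cle[0]{R} x_0y$ fails in $W$, while the forward direction in characteristic $p$ gives $x_0y\ceq{R} x_0z$ for \emph{every} $x\in W^I$, hence for this $x_0$; your matching of the universal quantifier (characteristic $p$) against the existential one (characteristic $0$) is exactly the point that makes this work. What each approach buys: yours is more self-contained, needing nothing beyond \cref{thmParaCombPCells} itself (whose $p=0$ case is Lusztig's parabolic compatibility), though it leans on the less elementary backward direction of that theorem; the paper's proof uses only the easy direction but imports an external result, and in exchange it localizes the failure at the identity coset --- its witnesses are the original $\p{C}$, $C_1$, $C_2$ --- whereas your witnesses $x_0y$, $x_0z$ live in an a priori unknown coset $x_0W_I$. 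Note finally that the finitary caveat you flag is shared by the paper: its proof also invokes \cref{thmParaCombPCells}, hence carries the same implicit assumption on $H$.
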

\begin{proof}
   By assumption we may find two distinct right Kazhdan-Lusztig cells 
   $C_1, C_2 \subseteq H$ and a right $p$-cell $\p{C} \subseteq H$ for 
   some $p$ such that $\p{C} \cap C_1$ and $\p{C} \cap C_2$ are non-empty. 
   By \cref{thmParaCombPCells} we may find right Kazhdan-Lusztig cells 
   $\widetilde{C_1}, \widetilde{C_2} \subseteq W$ and a right $p$-cell
   $\widetilde{\p{C}} \subseteq W$ such that 
   $C_i \subseteq \widetilde{C_i}$ for $i \in \{1, 2\}$ and 
   $\p{C} \subseteq \widetilde{\p{C}}$.
   
   Observe that for $i \in \{1,2\}$ the set $\widetilde{\p{C}} \cap \widetilde{C_i}$ 
   is non-empty as it contains $\p{C} \cap C_i$. Therefore, it is enough to show 
   that $\widetilde{C_1}$ and $\widetilde{C_2}$ are disjoint. Let $X$ be the 
   set of representatives of minimal length of cosets in  $H\backslash W$. 
   To conclude we need to recall a result about the induction of Kazhdan-Lusztig 
   cells (see \cite[Theorem 1]{GeIndKLCells} for the version for left cells):
   
   \begin{thm}[Induction of right Kazhdan-Lusztig cells]
      Let $H \subseteq W$ be a standard parabolic subgroup, and let $C$ be a
      right Kazhdan-Lusztig cell of $H$. Then $C \cdot X$ is a union of right cells 
      of $W$.
   \end{thm}
   
   Since we have $\widetilde{C_i} \subseteq C_i \cdot X$ for $i \in \{1,2\}$, 
   the claim follows from the fact that $C_1 \cdot X$ and $C_2 \cdot X$ are disjoint.
\end{proof}

% \subsection{A Conjecture}
% 
% In this section, we present a conjecture that is based on extensive computer
% calculations (mostly for $p$-cells in finite type).
% 
% Using Lusztig's $a$-function, one can show that Kazhdan-Lusztig right cells 
% within the same two-sided cell are incomparable. This follows from 
% \cite[Conjectures 14.2 P10]{LuUneq}. Our calculations support the following
% generalization:
% \begin{conj}
%    The right $p$-cells within the same two-sided $p$-cell are incomparable.
% \end{conj}
% We have verified this in types $B_n$, $C_n$ for $n \leqslant 5$, $D_n$ for
% $n \leqslant 5$, $F_4$ and $G_2$ for all primes $p$.

\section{Left and Right Star Operations}
\label{secStarOps}

\subsection{Definition and Numerical Consequences}

In the section, we will prove consequences of the Kazhdan-Lusztig star-operations
for the $p$-canonical basis. The star-operations were originally introduced in 
\cite[\S4]{KL}, generalizing (dual) Knuth operations from the symmetric group to
pairs of simple reflections $r, t \in S$ in general Coxeter groups with
$m_{r,t} = 3$. In the literature, there does not seem to exist a consensus on how
to generalize the star-operations to the case $3 < m_{r,t} < \infty$. 
We propose the following generalization as in \cite[Remark 4.3]{BGTauInv}: 

\begin{defn}
   \label{defStringsStars}
   Let $r, t \in S$ be two simple reflections. Define:
   \begin{align*}
      \mathcal{D}_L(r, t) &\defeq \{ w \in W \; \vert \;  \lvert \desc{L}(w) \cap \{r, t\} \rvert = 1 \} \\
      \mathcal{D}_R(r, t) &\defeq \{ w \in W \; \vert \;  \lvert \desc{R}(w) \cap \{r, t\} \rvert = 1 \}
   \end{align*}
   Set $m \defeq m_{r, t}$.  For $1 \leqslant k \leqslant m$ denote by ${}_r\hat{k}= rtrt\dots$ the alternating word 
   in $r$ and $t$ starting in $r$ of length $k$. Recall that $W^{\{r, t\}}$
   denotes the set of representatives of minimal length of cosets in 
   $W / \langle r, t\rangle$ (see \cref{defFinSubset}). Any coset in 
   $W/\langle r, t \rangle$ contains a unique element 
   $\widetilde{w} \in W^{\{r, t\}}$ and can be partitioned in 
   the following sets:
   \[ \begin{cases}
         \{ \widetilde{w} \} \cup \{ \widetilde{w}\cdot{}_r\hat{k} \; \vert \; 1 \leqslant k < m \} \cup
         \{ \widetilde{w}\cdot{}_t\hat{k} \; \vert \; 1 \leqslant k < m \} \cup \{ \widetilde{w}\cdot {}_t \hat{m} \}
            & \text{if } m < \infty \text{,} \\
         \{ \widetilde{w} \} \cup \{ \widetilde{w}\cdot{}_r\hat{k} \; \vert \; 1 \leqslant k \} \cup
         \{ \widetilde{w}\cdot{}_t\hat{k} \; \vert \; 1 \leqslant k \}
            & \text{if } m = \infty\text{.} 
      \end{cases}
   \]
   For $m < \infty$ the element $\widetilde{w}\cdot {}_t \hat{m}$ is the unique 
   element of maximal length in the coset. The set 
   $\{ \widetilde{w}\cdot {}_x\hat{k} \; \vert \; 1 \leqslant k < m \}$
   for some $x \in \{r, t\}$ is called a \emph{right $\langle r, t \rangle$-string}
   (also for $m = \infty$) and contained in $\mathcal{D}_R(r, t)$. The element 
   $\widetilde{w}\cdot {}_x\hat{k}$ is the \emph{$k$-th element} in this string.
   
   It is easy to see that actually any element $w \in \mathcal{D}_R(r,t)$  lies in
   a right $\langle r, t \rangle$-string and can thus be written as $\widetilde{w}\cdot {}_x\hat{k}$ 
   for some $x \in \{r, t\}$ and $1 \leqslant k < m$ where $\widetilde{w}$ is 
   the element of minimal length in the right coset 
   $w\langle r, t\rangle \in W/\langle r,t \rangle$.
   
   Assume $3 \leqslant m < \infty$. Then the right star operation 
   $(-)^{\ast}$ is an involution on $\mathcal{D}_R(r,t)$ sending 
   $w = \widetilde{w}\cdot {}_x\hat{k}$ for $x$ and $k$ as above to $\widetilde{w}\cdot
   {}_x\widehat{(m-k)}$. The left star operation $\pre{\ast}{(-)}$ is an 
   involution on $\mathcal{D}_L(r,t)$ defined analogously. 
\end{defn}
   
It follows immediately that the left and right star operations are related via:
\[\pre{\ast}{w} = (( w^{-1})^{\ast})^{-1} \text{.}\]

Fix for the rest of the section two simple reflections $r, t \in S$ with 
$3 \leqslant m \defeq m_{r, t} < \infty$. The multiplication formula for the
Kazhdan-Lusztig basis does not easily generalize to the 
$p$-canonical basis. However it will still be important to understand 
the structure coefficients of the $p$-canonical basis a little bit better. The 
next lemma states a crucial observation that will be used frequently below.
\begin{lem}
   \label{lemKLinPCanStringCoeff}
   Let $x, z \in \mathcal{D}_R(r,t)$ with $r \in \desc{R}(x)$. The coefficient of 
   $\kl{z}$ in $\pcan{x} \kl{t}$ is given by
   \[ \delta_{zr \in \mathcal{D}_R(r, t)} \pre{p}{m}_{zr, x} + \delta_{zt \in 
      \mathcal{D}_R(r, t)} \pre{p}{m}_{zt, x}
   \]
   where $\delta_{zr \in \mathcal{D}_R(r, t)}$ is the Kronecker delta.
\end{lem}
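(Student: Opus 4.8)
The plan is to expand $\pcan{x}$ in the Kazhdan--Lusztig basis, multiply on the right by $\kl{t}$ using the classical multiplication rule, and then read off the coefficient of $\kl{z}$, exploiting throughout the descent constraints on the base-change coefficients. Concretely, I would first write $\pcan{x}=\sum_{y\leqslant x}\pre{p}{m}_{y,x}\kl{y}$ (with $\pre{p}{m}_{x,x}=1$) via \cref{propPCanProps}(iii), and observe by \cref{propPCanProps}(v) that every contributing $y$ satisfies $\desc{R}(x)\subseteq\desc{R}(y)$; since $r\in\desc{R}(x)$, the sum runs only over $y$ with $r\in\desc{R}(y)$. Multiplying by $\kl{t}$ with the Kazhdan--Lusztig formula recalled in the proof of \cref{lemCleDesc} gives, for each such $y$, either $\kl{y}\kl{t}=(v+v^{-1})\kl{y}$ when $t\in\desc{R}(y)$, or $\kl{y}\kl{t}=\kl{yt}+\sum_{w<y,\,t\in\desc{R}(w)}\mu(w,y)\kl{w}$ when $t\notin\desc{R}(y)$. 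The coefficient of $\kl{z}$ is then assembled from these pieces.

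The two elementary contributions are easy to pin down. A term $(v+v^{-1})\kl{y}$ can only produce $\kl{z}$ if $y=z$ and $t\in\desc{R}(z)$, but then $\pre{p}{m}_{z,x}=0$ because $r\notin\desc{R}(z)$ while $r\in\desc{R}(x)$; so these never contribute. A term $\kl{yt}$ produces $\kl{z}$ exactly when $z=yt$ with $t\notin\desc{R}(y)$, i.e. $y=zt$ and $t\in\desc{R}(z)$, contributing $\pre{p}{m}_{zt,x}$; since $t\notin\desc{R}(zt)$ automatically, $zt\in\mathcal{D}_R(r,t)$ is equivalent to $r\in\desc{R}(zt)$, which is precisely the condition allowing $\pre{p}{m}_{zt,x}\neq0$, so this matches $\delta_{zt\in\mathcal{D}_R(r,t)}\pre{p}{m}_{zt,x}$. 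In the case $r\in\desc{R}(z)$ (so $t\notin\desc{R}(z)$) none of the three sources yields $\kl{z}$, and both terms of the claimed formula vanish by the descent constraint ($\pre{p}{m}_{zr,x}=0$ since $r\notin\desc{R}(zr)$, and $\pre{p}{m}_{zt,x}=0$ since $zt\in\mathcal{D}_R(r,t)$ forces $r\notin\desc{R}(zt)$); thus the statement holds there with coefficient $0$.

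It remains to treat $t\in\desc{R}(z)$, where the $\mu$-part contributes $\sum_{y}\pre{p}{m}_{y,x}\,\mu(z,y)$, summed over $y>z$ with $r\in\desc{R}(y)$ and $t\notin\desc{R}(y)$, i.e. over the relevant $y\in\mathcal{D}_R(r,t)$ in the configuration opposite to $z$. The element $y=zr$ is a Bruhat cover of $z$, so $\mu(z,zr)=1$; it lies in $\mathcal{D}_R(r,t)$ precisely when it is not the long element of the coset $z\langle r,t\rangle$ (otherwise it has $t$ as a right descent and is excluded from the sum), and it contributes exactly $\delta_{zr\in\mathcal{D}_R(r,t)}\pre{p}{m}_{zr,x}$. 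The main obstacle is to show that \emph{no other} $y$ contributes, i.e. that $\mu(z,y)=0$ whenever $z,y\in\mathcal{D}_R(r,t)$ are in opposite right-descent configurations and $y\neq zr$. This is the genuine numerical content and is exactly the input coming from the Kazhdan--Lusztig star operations of \cref{defStringsStars}: such a $\mu$ must vanish unless $z$ and $y$ are adjacent neighbours of a common right $\langle r,t\rangle$-string, i.e. unless $y=zr$. For $m_{r,t}=3$ this is \cite[\S4]{KL}; in general I would establish it by combining the descent monotonicity $\mu(z,y)\neq0\Rightarrow\desc{L}(y)\subseteq\desc{L}(z)$ of \cite[Proposition 2.4]{KL} (applied to $z\cle[0]{R}y$, which holds because $\kl{z}$ occurs in $\kl{y}\kl{t}$) with a careful analysis of the Kazhdan--Lusztig recursion along the right string through $z$. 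Granting this vanishing, the $\mu$-part collapses to $\delta_{zr\in\mathcal{D}_R(r,t)}\pre{p}{m}_{zr,x}$, and together with the $\kl{yt}$-contribution $\delta_{zt\in\mathcal{D}_R(r,t)}\pre{p}{m}_{zt,x}$ this gives precisely the asserted coefficient; the string-vanishing of $\mu$ is the step I expect to require the most care.
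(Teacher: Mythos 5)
Your proposal follows the same route as the paper's own proof: expand $\pcan{x}$ in the Kazhdan--Lusztig basis, apply the dihedral multiplication formula to each $\kl{y}\kl{t}$, and sort the three kinds of terms using the descent constraint of \cref{propPCanProps}(v). All of your bookkeeping is correct and matches the paper, including the degenerate case $r \in \desc{R}(z)$ (both sides zero), the identification of the $\kl{yt}$-contribution with $\delta_{zt \in \mathcal{D}_R(r,t)}\pre{p}{m}_{zt,x}$, and the contribution $\delta_{zr \in \mathcal{D}_R(r,t)}\pre{p}{m}_{zr,x}$ from the cover $zr$. The only incomplete point is exactly the step you flag and then assume: the vanishing of $\mu(z,y)$ for all contributing $y$ other than $zr$.

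That step, however, is far easier than you anticipate, and your proposed toolkit for it is partly off target. What you need is precisely the classical fact \cite[(2.3.f)]{KL}, which the paper simply quotes: if $z < w$ and $r$ is a simple reflection with $r \in \desc{R}(w) \setminus \desc{R}(z)$, then $\mu(z,w) \neq 0$ if and only if $w = zr$, in which case $\mu(z,w) = 1$. This is an immediate consequence of the Kazhdan--Lusztig recursion (when $zr > z$ and $wr < w$ one has $\pre{0}{h}_{z,w} = \pre{0}{h}_{zr,w}$, and the degree bound on $\pre{0}{h}_{zr,w}$ kills the coefficient of $v$ unless $zr = w$); it holds in any Coxeter group, for any $m_{r,t}$, and involves no strings or star operations whatsoever. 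Two warnings about your suggested route: the left-descent monotonicity of \cite[Proposition 2.4]{KL} constrains $\desc{L}$, not $\desc{R}$, so it cannot produce this vanishing; and you should not frame the vanishing as ``input coming from the star operations'' --- in this paper the numerical consequences of the star operations (\cref{propBCCoeffRels}, \cref{corVanishingStructCoeffsInStr}, \cref{propStructCoeffRels}) are themselves \emph{derived from} \cref{lemKLinPCanStringCoeff}, so appealing to them here would be circular. With \cite[(2.3.f)]{KL} cited in place of ``granting this vanishing'', your argument is complete and coincides with the paper's proof.
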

\begin{proof}
   Rewrite the product
   $\pcan{x} \kl{t}$ as follows:
      \begin{align*}
      \pcan{x} \kl{t} &= (\kl{x} + \sum_{w < x} \pre{p}{m}_{w, x} \kl{w}) \kl{t} \\
      &= \kl{xt} + \sum_{\substack{z \leqslant x \\ zt < z}} \mu(z, x) \kl{z}
      + \sum_{\substack{w < x\\wt > w}} \pre{p}{m}_{w, x} 
      \left(\kl{wt} + \sum_{\substack{z \leqslant w\\ zt < z}} \mu(z,w) \kl{z}\right)\\
      &\phantom{=} + \sum_{\substack{z < x\\ zt < z}}(v+v^{-1})\pre{p}{m}_{z, x}\kl{z}
   \end{align*}
   If $t$ is not in the right descent set of $z$, the coefficient in front of
   $\kl{z}$ has to be zero. 
   By \cref{propPCanProps}(v) the formula stated
   above also gives zero in this case. Thus, we assume $t \in \desc{R}(z)$ from now on.
   Since $z$ lies in $\mathcal{D}_R(r,t)$, this implies that $r$ does not lie in the
   right descent set of $z$. Consider an element $w \in W$ with $\pre{p}{m}_{w, x} \ne 0$ 
   such that $\kl{z}$ occurs with non-zero coefficient in $\kl{w} \kl{t}$. Observe that 
   $w$ could be $x$. By \cref{propPCanProps}(v) $\pre{p}{m}_{w, x} \ne 0$ implies 
   $r \in \desc{R}(x) \subseteq \desc{R}(w)$. Since the right descent sets differ,
   $w$ and $z$ cannot coincide. In particular, $w$ does not have $t$ in its right descent
   set and thus also lies in $\mathcal{D}_R(r,t)$. (Otherwise $\kl{z}$ could not
   occur with non-zero coefficient in $\kl{w}\kl{t}$.) Recall the following important fact 
   about the $\mu$-coefficients (from \cite[(2.3.f)]{KL}):
   \begin{lem}
      Let $z < w \in W$ and $r \in \desc{R}(w) \setminus \desc{R}(z)$. Then we have:
      \[ \mu(z, w) \ne 0 \Leftrightarrow w = zr \]
      Moreover, $\mu(z, w) = 1$ in this case.
   \end{lem}
   If $z < w$ holds, then we may apply this lemma to $z < w$ and the simple reflection 
   $r$ to get that $w = zr$. Otherwise, we have $z = wt > w$ (due to the 
   multiplication formula from \cite[(2.3.b)]{KL}) and again $\mu(w, z) = 1$.
   In both cases, we see that $z$ and $w$ lie in the same right $\langle r, t \rangle$-string 
   and the coefficient of $\kl{z}$ in $\pcan{x} \kl{t}$ is 
   \[\delta_{zr \in \mathcal{D}_R(r, t)} \pre{p}{m}_{zr, x} + \delta_{zt \in 
      \mathcal{D}_R(r, t)} \pre{p}{m}_{zt, x} \text{.}\qedhere\]
\end{proof}

\begin{defn}
   The \emph{weak right Bruhat graph} of $(W, S)$ is the labelled, directed graph
   with vertex set $W$ and edge set
   \[ \{ (w, ws) \; \vert \; w \in W, s \in S \setminus \desc{R}(w) \} \text{.} \]
   For $w \in W$ and $s \in S \setminus \desc{R}(w)$ the edge
   $(w, ws)$ is labelled by $\rt[s]$.
\end{defn}

The reader may picture the formula from \cref{lemKLinPCanStringCoeff} as follows: 
Consider the subgraph of the weak right Bruhat graph on 
the vertices $\mathcal{D}_R(r,t) \cap (z \langle r, t \rangle)$ and only edges labelled
by $\rt[r]$ or $\rt[t]$. In order to get the coefficient of $\kl{z}$ in $\pcan{x} \kl{t}$
we simply have to slide the coefficients $\pre{p}{m}_{?, x}$ up along an edge
labelled by $\rt[t]$ and down along an edge labelled by $\rt[r]$ and sum them up
if two coefficients collide at a vertex in the process. Here, up and down
are meant with respect to the weak right Bruhat order.

For the rest of the section, we will assume:
\[ p > \begin{cases}
          1 & \text{if } m = 3\text{,} \\
          2 & \text{if } m = 4\text{,} \\
          3 & \text{if } m = 6\text{.}
       \end{cases}\]
This ensures that for $w \in \langle r, t\rangle$ we have $\pcan{w} = \kl{w}$ 
(see \cite[\S5.1, \S5.2 and \S5.6]{JW}) and thus Kazhdan-Lusztig cells and $p$-cells
in $\langle r, t\rangle$ coincide.

\begin{prop}
   \label{propBCCoeffRels}
   %Let $x, z \in \mathcal{D}_R(r,t)$. 
   Let $\sigma_x = \{ x_1 < x_2 < \dots \}$ and $\sigma_z = \{ z_1 < z_2 < \dots \}$
   be two right $\langle r, t \rangle$-strings consisting of $m - 1$ elements. 
   Then we have the following relations among the coefficients $\pre{p}{m}_{z_j, x_i}$:
   \begin{align}
      m = 3 &\Rightarrow \begin{cases}
                           \pre{p}{m}_{z_1, x_1} = \pre{p}{m}_{z_2, x_2} \\
                           \pre{p}{m}_{z_2, x_1} = \pre{p}{m}_{z_1, x_2}
                        \end{cases} \\
      m = 4 &\Rightarrow \begin{cases}
                           \pre{p}{m}_{z_1, x_1} = \pre{p}{m}_{z_3, x_3} \\
                           \pre{p}{m}_{z_2, x_1} = \pre{p}{m}_{z_1, x_2} = \pre{p}{m}_{z_3, x_2} = \pre{p}{m}_{z_2, x_3} \\
                           \pre{p}{m}_{z_3, x_1} = \pre{p}{m}_{z_1, x_3} \\
                           \pre{p}{m}_{z_2, x_2} = \pre{p}{m}_{z_1, x_1} + \pre{p}{m}_{z_3, x_1}
                         \end{cases} \\
      m = 6 &\Rightarrow \begin{cases}
                           \pre{p}{m}_{z_1, x_1} = \pre{p}{m}_{z_5, x_5} \\
                           \pre{p}{m}_{z_2, x_1} = \pre{p}{m}_{z_1, x_2} = \pre{p}{m}_{z_5, x_4} = \pre{p}{m}_{z_4, x_5} \\
                           \pre{p}{m}_{z_3, x_1} = \pre{p}{m}_{z_1, x_3} = \pre{p}{m}_{z_5, x_3} = \pre{p}{m}_{z_3, x_5} \\
                           \pre{p}{m}_{z_4, x_1} = \pre{p}{m}_{z_1, x_4} = \pre{p}{m}_{z_5, x_2} = \pre{p}{m}_{z_2, x_5} \\
                           \pre{p}{m}_{z_5, x_1} = \pre{p}{m}_{z_1, x_5} \\
                           \pre{p}{m}_{z_2, x_2} = \pre{p}{m}_{z_4, x_4} = \pre{p}{m}_{z_1, x_1} + \pre{p}{m}_{z_3, x_1} \\
                           \pre{p}{m}_{z_3, x_2} = \pre{p}{m}_{z_2, x_3} = \pre{p}{m}_{z_4, x_3} = \pre{p}{m}_{z_3, x_4} = \pre{p}{m}_{z_2, x_1} + \pre{p}{m}_{z_4, x_1} \\
                           \pre{p}{m}_{z_4, x_2} = \pre{p}{m}_{z_2, x_4} = \pre{p}{m}_{z_3, x_1} + \pre{p}{m}_{z_5, x_1} \\
                           \pre{p}{m}_{z_3, x_3} = \pre{p}{m}_{z_1, x_1} + \pre{p}{m}_{z_3, x_1} + \pre{p}{m}_{z_5, x_1}
                         \end{cases}
   \end{align}
\end{prop}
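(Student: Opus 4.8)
The plan is to repackage the nine (resp.\ four, resp.\ two) identities into a single matrix identity and then to prove that identity by evaluating one family of products in two different ways. Arrange the coefficients into the $(m-1)\times(m-1)$ matrix $A=(\pre{p}{m}_{z_i,x_j})_{1\le i,j\le m-1}$, with row index running over $\sigma_z$ and column index over $\sigma_x$, and let $T$ be the adjacency matrix of the path on $m-1$ vertices, i.e.\ $T_{ik}=1$ exactly when $\lvert i-k\rvert=1$ and $T_{ik}=0$ otherwise. Since $T$ has the $m-1$ distinct eigenvalues $2\cos(k\pi/m)$ ($1\le k\le m-1$), a matrix commutes with $T$ if and only if it is a polynomial in $T$; expanding $TA=AT$ entrywise and iterating produces exactly the relations displayed in the statement (for $m=4$, say, the single-entry equations give $A_{21}=A_{12}$, $A_{11}+A_{31}=A_{22}$, etc., and the rest follow). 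Hence it suffices to establish $TA=AT$.

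The first half, the identity $TA=(\cdot)$, is the \emph{$z$-side} and is the clean part. For a fixed $x_j$ I multiply $\pcan{x_j}$ on the right by $\kl{t}$ if $r\in\desc{R}(x_j)$ and by $\kl{r}$ if $t\in\desc{R}(x_j)$, and read off the coefficient of $\kl{z_i}$. By \cref{lemKLinPCanStringCoeff} (and its evident $r\leftrightarrow t$ variant) this coefficient is the sum of $\pre{p}{m}_{?,x_j}$ over the two $\{r,t\}$-neighbours of $z_i$ inside the coset $z_i\langle r,t\rangle$. Because $\sigma_z$ is a full string of length $m-1$, those neighbours lying in $\mathcal{D}_R(r,t)$ are precisely $z_{i-1}$ and $z_{i+1}$ (the minimal coset representative and the dihedral-longest element of $z_i\langle r,t\rangle$ are not in $\mathcal{D}_R(r,t)$, so the corresponding Kronecker deltas vanish). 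Thus the coefficient of $\kl{z_i}$ equals $(TA)_{ij}$, exactly and for every $i$. This step uses the hypothesis on $p$ only through the standing assumption that $\pcan{w}=\kl{w}$ for $w\in\langle r,t\rangle$.

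The second half, $(\cdot)=AT$, is the \emph{$x$-side}. Here I decompose the same product in the $p$-canonical basis, $\pcan{x_j}\kl{t}=\sum_w \pre{p}{\mu}^{w}_{x_j,t}\,\pcan{w}$. By \cref{corParStrCoeffs} the structure coefficients indexed by the top coset $\widetilde{x}\langle r,t\rangle$ coincide with the corresponding dihedral structure coefficients, which (again by $\pcan{}=\kl{}$ in $\langle r,t\rangle$) are governed by the same path combinatorics as $T$; so the top-coset part of the product is $\pcan{x_{j-1}}+\pcan{x_{j+1}}$ on the interior, contributing $\pre{p}{m}_{z_i,x_{j-1}}+\pre{p}{m}_{z_i,x_{j+1}}=(AT)_{ij}$ to the coefficient of $\kl{z_i}$. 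The two remaining kinds of terms must be shown not to contribute: the dihedral-longest element $\widetilde{x}\cdot{}_t\widehat{m}$ has both $r$ and $t$ in its right descent set, so $\pre{p}{m}_{z_i,\widetilde{x}\cdot{}_t\widehat{m}}=0$ by \cref{propPCanProps}(v) since $z_i\in\mathcal{D}_R(r,t)$ has only one of $r,t$ as a right descent (this is exactly what kills the leakage at the top of the string); and the terms $\pcan{w}$ supported on cosets strictly below $\widetilde{x}\langle r,t\rangle$ have to be excluded from the coefficient of $\kl{z_i}$.

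The main obstacle is precisely this last vanishing. I would first show that every $w$ occurring in $\pcan{x_j}\kl{t}$ satisfies $t\in\desc{R}(w)$ (so that the right ideal $\heck\kl{t}$ is controlled by right descents), which already forces the coefficient of $\kl{z_i}$ to vanish unless $t\in\desc{R}(z_i)$ and narrows the dangerous $w$ to those with $z_i\le w$ and $\desc{R}(w)\subseteq\desc{R}(z_i)$. To eliminate the surviving lower-coset terms I would pass to the truncation that forgets all $\kl{y}$ with $y$ below the coset of $z_i$ and invoke the coset block structure of the Bott--Samelson--to--$p$-canonical base-change matrix established in the proof of \cref{thmParaCombPCells}: there the product decouples coset by coset, so only the top coset can contribute to the $\kl{z_i}$-coefficient, leaving exactly $(AT)_{ij}$. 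Equating with the $z$-side gives $TA=AT$, and hence all the stated relations; the case where $x_j$ has $t\in\desc{R}(x_j)$ (multiplying by $\kl{r}$ instead) is identical and supplies the remaining neighbour relations.
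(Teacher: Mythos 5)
Your reduction of the statement to the single entrywise commutation relation $TA=AT$ is correct (expanding it entry by entry gives exactly the displayed identities), and your ``$z$-side'' is precisely the paper's: by \cref{lemKLinPCanStringCoeff}, the coefficient of $\kl{z_i}$ in $\pcan{x_j}\kl{t}$ equals $(TA)_{ij}$ on the nose. The gap is entirely on the ``$x$-side''. What \cref{corParStrCoeffs} (equivalently, the coset block structure of the base-change matrix from the proof of \cref{thmParaCombPCells}) controls is \emph{only} the structure coefficients $\pre{p}{\mu}^{w}_{x_j, t}$ for $w$ in the top coset $x_j\langle r,t\rangle$; the coefficients for $w$ in other cosets are exactly the uncontrolled off-diagonal blocks of that matrix, and nothing in the block structure makes them vanish. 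Concretely, after your (correct) descent-set reductions, the dangerous terms are elements $w \in \mathcal{D}_R(r,t)$ with $z_i \leqslant w \leqslant x_j t$ and $t \in \desc{R}(w)$ lying in cosets different from that of $x_j$: for instance $w = z_{i+2}$ lies in the coset of $z_i$ itself and has the same $\{r,t\}$-descent as $z_i$, so neither $\pre{p}{\mu}^{w}_{x_j,t}$ nor $\pre{p}{m}_{z_i, w}$ is excluded on descent grounds, and no truncation ``below the coset of $z_i$'' can remove it. The vanishing you need is precisely \cref{corVanishingStructCoeffsInStr}, which the paper \emph{deduces from} \cref{propBCCoeffRels}; invoking it (or anything equivalent to it) here is circular.

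This is exactly why the paper's proof never claims the $x$-side equality. Instead, using positivity (\cref{propPCanProps}(iii) and (vi)) it drops the unknown non-negative cross-coset contributions to obtain only the one-sided estimate $(AT)_{ij} \leqslant (TA)_{ij}$ coefficientwise, i.e.\ the system $(\ast_{i,j})$, and then supplies the genuinely surprising combinatorial step that is missing from your proposal: any solution of this system of inequalities automatically satisfies all of them with equality (the cyclic chains of inequalities carried out for $m=6$). To repair your argument you must either reinstate that forcing step, or give an independent proof that the cross-coset structure coefficients (paired with the relevant $\pre{p}{m}$-coefficients) vanish --- and the coset block structure alone does not provide this.
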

\begin{proof}
   Comparing Laurent polynomials coefficient-wise induces a partial order which we will
   use in the following. For $x \in \sigma_x$ with $r \in \desc{R}(x)$, rewrite 
   $\pcan{x} \kl{t}$ in terms of the $p$-canonical basis to get:
   \[ \sum_{v \leqslant xt} \pre{p}{\mu}^v_{x, t} \pcan{v} \]
   Express this in the Kazhdan-Lusztig basis and use \cref{lemKLinPCanStringCoeff} 
   to see that we have the following inequality for $z \in \sigma_z$ which
   is actually satisfied with equality:
   \begin{equation}
   \label{eqnCoeffIneq}
      \sum_{z \leqslant v \leqslant xt} \pre{p}{\mu}^v_{x, t} \pre{p}{m}_{z, v} 
      \leqslant \delta_{zr \in \mathcal{D}_R(r, t)}  \pre{p}{m}_{zr, x} + 
      \delta_{zt \in \mathcal{D}_R(r, t)} \pre{p}{m}_{zt, x}    
   \end{equation}
  
   We want to use a weaker form of this inequality together with the fact that we understand the structure 
   coefficients in the right $\langle r, t \rangle$-coset of $x$ (see \cref{corParStrCoeffs}). 
   Let $s_i \in \{r, t\}$ be the simple reflection such that $x_i s_i > x_i$ for 
   $1 \leqslant i \leqslant m - 1$. Write $x_0$ (resp. $x_m$) for the shortest 
   (resp. longest) element in the right $\langle r, t \rangle$-coset of $x$. Similarly for
   $z_0$ and $z_m$. We can restrict the sum on the left hand side 
   of \eqref{eqnCoeffIneq} to $v \in \sigma_x$  
   and $v \leqslant xt$. From \cref{corParStrCoeffs} and the explicit knowledge of 
   the structure constants of the Kazhdan-Lusztig basis in the dihedral case we deduce:
   \[\pcan{x_i} \pcan{s_i} = 
   \begin{cases} 
      \pcan{x_2} & \text{if } i = 1\text{,}\\
      \pcan{x_{i+1}} + \pcan{x_{i-1}} & \text{otherwise.}\\
   \end{cases} \quad \left(\text{mod } \sum_{\substack{w < x_{i+1}\\ 
                     w \notin \sigma_x}} \Z[v, v^{-1}] \pcan{w} \right)\]
   Using this in inequality \eqref{eqnCoeffIneq} and letting $x$ and $z$ in their right 
   $\langle r, t \rangle$-string vary, we obtain for $1 \leqslant i, j \leqslant m - 1$:
   \begin{equation}
      \resizebox{0.9\linewidth}{!}{
      $\delta_{x_{i+1} \in \sigma_x} \pre{p}{m}_{z_j, x_{i+1}} + \delta_{x_{i-1} \in \sigma_x}
      \pre{p}{m}_{z_j, x_{i-1}} \leqslant \delta_{z_{j+1} \in \sigma_z}  \pre{p}{m}_{z_{j+1}, x_i} + 
      \delta_{z_{j-1} \in \sigma_z} \pre{p}{m}_{z_{j-1}, x_i}$} \tag{$\ast_{i,j}$}
   \end{equation}
  
   Surprisingly enough, any solution to this system of inequalities satisfies
   all inequalities with equality. We will solve this system of linear inequalities
   for $m = 6$ and leave the cases $m \in \{3, 4\}$ to the reader. To simplify notation, 
   write $a_{j,i} = \pre{p}{m}_{z_j, x_i}$ for all $1 \leqslant i, j \leqslant m-1$
   and view them as indeterminates. The set of inequalities can be partitioned 
   into two sets of inequalities which can be solved completely independently:
   \[ \{ (\ast_{i,j}) \; \vert \; i + j \text{ even} \} \cup \{ (\ast_{i,j}) \; \vert \; i + j \text{ odd} \} \]
   
   First, let us consider $\{ (\ast_{i,j}) \; \vert \; i + j \text{ even} \}$:
   \begin{align}
      a_{1,2} &\leqslant a_{2,1} \tag{i} \label{eqnEven1} \\
      a_{3,2} &\leqslant a_{2,1} + a_{4,1} \tag{ii} \label{eqnEven2} \\
      a_{5,2} &\leqslant a_{4,1} \tag{iii} \label{eqnEven3} \\
      a_{2,3} + a_{2,1} &\leqslant a_{1,2} + a_{3,2} \tag{iv} \label{eqnEven4} \\
      a_{4,3} + a_{4,1} &\leqslant a_{3,2} + a_{5,2} \tag{v} \label{eqnEven5} \\
      a_{1,4} + a_{1,2} &\leqslant a_{2,3} \tag{vi} \label{eqnEven6} \\
      a_{3,4} + a_{3,2} &\leqslant a_{2,3} + a_{4,3} \tag{vii} \label{eqnEven7} \\
      a_{5,4} + a_{5,2} &\leqslant a_{4,3} \tag{viii} \label{eqnEven8} \\
      a_{2,5} + a_{2,3} &\leqslant a_{1,4} + a_{3,4} \tag{ix} \label{eqnEven9} \\
      a_{4,5} + a_{4,3} &\leqslant a_{3,4} + a_{5,4} \tag{x} \label{eqnEven10} \\
      a_{1,4} &\leqslant a_{2,5} \tag{xi} \label{eqnEven11} \\
      a_{3,4} &\leqslant a_{2,5} + a_{4,5} \tag{xii} \label{eqnEven12} \\
      a_{5,4} &\leqslant a_{4,5} \tag{xiii} \label{eqnEven13}
   \end{align}
   Consider the following chain of inequalities:
   \[ a_{2,1} \overset{\eqref{eqnEven4}}{\leqslant} a_{1,2} + a_{3,2} - a_{2,3} 
      \overset{\eqref{eqnEven7}}{\leqslant} a_{1,2} + a_{4,3} - a_{3,4} 
      \overset{\eqref{eqnEven10}}{\leqslant} a_{1,2} + a_{5,4} -a_{4,5}
      \overset{\eqref{eqnEven13}}{\leqslant} a_{1,2}
      \overset{\eqref{eqnEven1}}{\leqslant} a_{2,1}\]
   This implies that the inequalities \eqref{eqnEven1}, \eqref{eqnEven4}, \eqref{eqnEven7}, 
   \eqref{eqnEven10} and \eqref{eqnEven13} are all satisfied with equality, which in turn implies:
   \[ a_{2,3} = a_{3,2} \qquad a_{3,4} = a_{4,3} \]
   Next, consider the following chain:
   \[ a_{4,1} \overset{\eqref{eqnEven5}}{\leqslant} a_{3,2} + a_{5,2} - a_{4,3}
      \overset{\eqref{eqnEven7}}{=} a_{2,3} - a_{3,4} + a_{5,2}
      \overset{\eqref{eqnEven9}}{\leqslant} a_{1,4} - a_{2,5} + a_{5,2}
      \overset{\eqref{eqnEven11}}{\leqslant} a_{5,2}
      \overset{\eqref{eqnEven3}}{\leqslant} a_{4,1} \]
   This shows that the inequalities \eqref{eqnEven3}, \eqref{eqnEven5}, \eqref{eqnEven9} and 
   \eqref{eqnEven11} are also satisfied with equality, from which we deduce:
   \[ a_{2,3} = a_{3,2} = a_{3,4} = a_{4,3} \]
   Finally, we have
   \[ a_{1,4} + a_{1,2} \overset{\eqref{eqnEven6}}{\leqslant} a_{2,3} = a_{3,2} 
   \overset{\eqref{eqnEven2}}{\leqslant} a_{2,1} + a_{4,1} \]
   and 
   \[ a_{5,4} + a_{5,2} \overset{\eqref{eqnEven8}}{\leqslant} a_{4,3} = a_{3,4} 
   \overset{\eqref{eqnEven12}}{\leqslant} a_{2,5} + a_{4,5} \]
   which imply using $a_{1,2} = a_{2,1}$ and $a_{5,4} = a_{4,5}$ respectively
   \[ a_{1,4} \leqslant a_{4,1} \qquad a_{5,2} \leqslant a_{2,5} \]
   Using $a_{1,4} = a_{2,5}$ and $a_{4,1} = a_{5,2}$ finishes the argument.

   Next, we solve $\{ (\ast_{i,j}) \; \vert \; i + j \text{ odd} \}$:
   \begin{align}
      a_{2,2} &\leqslant a_{1,1} + a_{3,1} \tag{i'} \label{eqnOdd1} \\
      a_{4,2} &\leqslant a_{3,1} + a_{5,1} \tag{ii'} \label{eqnOdd2} \\
      a_{1,3} + a_{1,1} &\leqslant a_{2,2} \tag{iii'} \label{eqnOdd3} \\
      a_{3,3} + a_{3,1} &\leqslant a_{2,2} + a_{4,2} \tag{iv'} \label{eqnOdd4} \\
      a_{5,3} + a_{5,1} &\leqslant a_{4,2} \tag{v'} \label{eqnOdd5} \\
      a_{2,4} + a_{2,2} &\leqslant a_{1,3} + a_{3,3}  \tag{vi'} \label{eqnOdd6} \\
      a_{4,4} + a_{4,2} &\leqslant a_{3,3} + a_{5,3} \tag{vii'} \label{eqnOdd7} \\
      a_{1,5} + a_{1,3} &\leqslant a_{2,4} \tag{viii'} \label{eqnOdd8} \\
      a_{3,5} + a_{3,3} &\leqslant a_{2,4} + a_{4,4} \tag{ix'} \label{eqnOdd9} \\
      a_{5,5} + a_{5,3} &\leqslant a_{4,4} \tag{x'} \label{eqnOdd10} \\
      a_{2,4} &\leqslant a_{1,5} + a_{3,5} \tag{xi'} \label{eqnOdd11} \\
      a_{4,4} &\leqslant a_{3,5} + a_{5,5} \tag{xii'} \label{eqnOdd12}
   \end{align}
   In this case we argue as follows:
   \[ a_{1,1} \overset{\eqref{eqnOdd3}}{\leqslant} a_{2,2} - a_{1,3}  
      \overset{\eqref{eqnOdd6}}{\leqslant} a_{3,3} - a_{2,4} 
      \overset{\eqref{eqnOdd9}}{\leqslant} a_{4,4} - a_{3,5}
      \overset{\eqref{eqnOdd12}}{\leqslant} a_{5,5} \]
   We use this in the last inequality of the following chain:\\
   \resizebox{\linewidth}{!}{
      $a_{3,1} \overset{\eqref{eqnOdd4}}{\leqslant} a_{2,2} + a_{4,2} - a_{3,3} 
      \overset{\eqref{eqnOdd7}}{\leqslant} a_{2,2} + a_{5,3} - a_{4,4} 
      \overset{\eqref{eqnOdd10}}{\leqslant} a_{2,2} - a_{5,5}
      \overset{\eqref{eqnOdd1}}{\leqslant} a_{1,1} + a_{3,1} - a_{5,5}
      \leqslant a_{3,1}$}
   This implies that $a_{1,1} = a_{5,5}$ and the inequalities \eqref{eqnOdd1}, \eqref{eqnOdd3},
   \eqref{eqnOdd4}, \eqref{eqnOdd6}, \eqref{eqnOdd7}, \eqref{eqnOdd9}, \eqref{eqnOdd10}
   and \eqref{eqnOdd12} are satisfied with equality. Moreover, we have the following
   equivalences:
   \begin{align*}
      \eqref{eqnOdd1} = \eqref{eqnOdd3} &\Leftrightarrow a_{1,3} = a_{3,1}  \Leftrightarrow 
      \eqref{eqnOdd4} = \eqref{eqnOdd6} \\
      &\Leftrightarrow a_{2,4} = a_{4,2} \Leftrightarrow \eqref{eqnOdd7} = \eqref{eqnOdd9} \\
      &\Leftrightarrow a_{5,3} = a_{3,5} \Leftrightarrow \eqref{eqnOdd10} = \eqref{eqnOdd12}
   \end{align*}
   The last four inequalities can be used as follows:
   \begin{align*}
      a_{5,3} + a_{5,1} \overset{\eqref{eqnOdd5}}{\leqslant} &a_{4,2} 
      \overset{\eqref{eqnOdd2}}{\leqslant} a_{3,1} + a_{5,1} \\
      a_{1,5} + a_{1,3} \overset{\eqref{eqnOdd8}}{\leqslant} &a_{2,4}
      \overset{\eqref{eqnOdd11}}{\leqslant} a_{1,5} + a_{3,5} 
   \end{align*}
   This gives $a_{5,3} \leqslant a_{3,1} = a_{1,3} \leqslant a_{3,5} = a_{5,3}$ and
   finishes the argument.
   
   Finally, observe that the space of solutions for these (in)equalities is a free 
   $\Z[v,v^{-1}]$-module of rank $m-1$ and we can choose $\{a_{1,1}, a_{2,1}, \dots,
   a_{m-1,1}\}$ as a basis. In other words, the solution is uniquely determined after
   fixing these Laurent polynomials. From all these equalities, the reader can easily 
   deduce the relations given in the proposition where we expressed each coefficient 
   in terms of our chosen basis of the solution space.
\end{proof}

\begin{remark}
   In the proof \cref{propBCCoeffRels} we have introduced a set of (in)equalities
   governing the base change coefficients between two $\langle r, t \rangle$-strings
   $\sigma_x$ and $\sigma_z$ and shown that its of space of solutions is a 
   free $\Z[v,v^{-1}]$-module of rank $m-1$.
   Of course, not every solution gives a possible set of base change 
   coefficients $\{\pre{p}{m}_{z_j, x_i}  \; \vert \; 1 \leqslant 
   i, j \leqslant m-1\}$ as these coefficients have to satisfy more constraints:
   \cref{propPCanProps}(iii) shows that these coefficients are self-dual 
   and have non-negative integers as coefficients. Moreover, due to 
   \cref{propPCanProps}(v) these coefficients satisfy parity vanishing
   for fixed $i$ and arbitrary $1 \leqslant j \leqslant m-1$. This is also the 
   underlying reason why we could partition the set of inequalities in two sets. 
   %In practice the set of variables involved in at most one of these sets is 
   %non-zero for fixed strings $\sigma_x$ and $\sigma_z$.
\end{remark}

\begin{cor}
   For $z \leqslant x \in \mathcal{D}_R(r, t)$ one has:
   \[ \pre{p}{m}_{z, x} = \pre{p}{m}_{z^{\ast}, x^{\ast}} \]
\end{cor}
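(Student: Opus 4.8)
The plan is to reduce the statement to the string-index identities already established in \Cref{propBCCoeffRels}. First I would fix the right $\langle r, t\rangle$-strings containing $x$ and $z$: since $x, z \in \mathcal{D}_R(r, t)$, by \cref{defStringsStars} each lies in a right $\langle r, t\rangle$-string of exactly $m-1$ elements, say $\sigma_x = \{ x_1 < x_2 < \dots < x_{m-1} \}$ and $\sigma_z = \{ z_1 < z_2 < \dots < z_{m-1} \}$. Writing $x = x_i$ and $z = z_j$, the defining description of the right star operation, which sends $\widetilde{w} \cdot {}_a\hat{k}$ to $\widetilde{w} \cdot {}_a\widehat{(m-k)}$ for $a \in \{r, t\}$, says precisely that $(-)^{\ast}$ reverses the position of an element inside its string. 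Since $l(\widetilde{w} \cdot {}_a\hat{k}) = l(\widetilde{w}) + k$, this position agrees with the Bruhat ordering used above, so that $x^{\ast} = x_{m-i}$ and $z^{\ast} = z_{m-j}$, and in particular $x^{\ast} \in \sigma_x$ and $z^{\ast} \in \sigma_z$.

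With this translation the claim becomes the single family of identities
\[ \pre{p}{m}_{z_j, x_i} = \pre{p}{m}_{z_{m-j}, x_{m-i}} \qquad (1 \leqslant i, j \leqslant m-1), \]
that is, the invariance of the base change coefficients under the $180^{\circ}$ rotation $(i, j) \mapsto (m-i, m-j)$ of the index square. The key observation is that this rotation symmetry is manifestly present in \Cref{propBCCoeffRels}: I would simply go through the three lists of relations (for $m = 3$, $4$ and $6$) and check that in every equality chain each coefficient $\pre{p}{m}_{z_j, x_i}$ occurs together with its rotated partner $\pre{p}{m}_{z_{m-j}, x_{m-i}}$. For the unique rotation-fixed index $(i, j) = (m/2, m/2)$, which occurs only when $m$ is even, the identity is trivial. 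This yields the displayed family for all $(i, j)$, and hence the corollary.

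I do not expect a genuine obstacle, as all the content is already contained in \Cref{propBCCoeffRels}; the remaining work is the routine verification that the reversal symmetry respects each of the stated relation chains. The only step demanding a little care is the translation itself: one must confirm that the combinatorial recipe for $(-)^{\ast}$ in \cref{defStringsStars} matches the length (equivalently Bruhat) ordering $x_1 < \dots < x_{m-1}$ of a string, i.e.\ that the alternating words ${}_a\hat{k}$ genuinely grow with $k$, so that \emph{the $k$-th element of the string} is unambiguous and its star is \emph{the $(m-k)$-th element}.
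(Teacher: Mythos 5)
Your proposal is correct and follows essentially the same route as the paper: the paper's own proof simply observes that the claim amounts to $a_{j,i} = a_{m-j,\,m-i}$ in the notation of the proof of \cref{propBCCoeffRels}, which is weaker than the relations established there. You spell out the two details the paper leaves implicit (that $(-)^{\ast}$ reverses the length-ordered position within a string, and that each relation chain for $m \in \{3,4,6\}$ is invariant under the rotation $(i,j) \mapsto (m-i, m-j)$, with the fixed index $(m/2, m/2)$ trivial), both of which check out.
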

\begin{proof}
   Note that $\pre{p}{m}_{z, x} = \pre{p}{m}_{z^{\ast}, x^{\ast}}$ asks only for
   $a_{i, j} = a_{m-i, m-j}$ (in the notation of the proof of \cref{propBCCoeffRels})
   for $1 \leqslant i, j \leqslant m-1$ whereas we have
   shown many more relations among these coefficients in the last proposition.
\end{proof}

In the proof of \cref{propBCCoeffRels} we have seen that when translating $\pre{k}{B}_x$
by $\pre{k}{B}_t$ for $x \in \mathcal{D}_R(r,t)$ with $xt > x$ the available coefficient 
of $\kl{z}$ in $\pcan{x} \kl{t}$ for $z \in \mathcal{D}_R(r,t)$ is completely subsumed by the 
neighbouring elements of $x$ in its right $\langle r, t \rangle$-string. This implies 
the following about the structure coefficients:

\begin{cor}
   \label{corVanishingStructCoeffsInStr}
   Let $x, z \in \mathcal{D}_R(r,t)$ with $xt > x$. Then $\pre{p}{\mu}_{x, t}^z$
   vanishes unless $x$ and $z$ are neighbouring elements in the same right 
   $\langle r, t \rangle$-string.
\end{cor}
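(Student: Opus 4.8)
The plan is to read off the corollary from the machinery already assembled in the proof of \cref{propBCCoeffRels}: it is precisely the ``structure-coefficient shadow'' of the positivity computation carried out there. Since $x \in \mathcal{D}_R(r,t)$ and $xt > x$, the reflection $r$ lies in $\desc{R}(x)$, so $x = x_i$ is an element of some right $\langle r, t\rangle$-string $\sigma_x$ whose up-reflection is $t$ and whose down-reflection is $r$; in particular $xt = x_{i+1}$ and $xr = x_{i-1}$. First I would dispose of the elements $z$ lying in $\sigma_x$ itself. By \cref{corParStrCoeffs} together with the dihedral structure constants we recorded the congruence
\[ \pcan{x_i}\kl{t} \equiv \pcan{x_{i+1}} + \pcan{x_{i-1}} \pmod{\textstyle\sum_{w < x_{i+1},\, w \notin \sigma_x} \Z[v,v^{-1}]\pcan{w}}, \]
(reducing to the single summand $\pcan{x_2}$ when $i = 1$). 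Reading off the $\sigma_x$-components shows that $\pre{p}{\mu}_{x,t}^z = 1$ for $z \in \{x_{i-1}, x_{i+1}\}$ and $\pre{p}{\mu}_{x,t}^z = 0$ for every other $z \in \sigma_x$, so among elements of $\sigma_x$ only the two neighbours of $x$ survive.

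It then remains to treat $z \in \mathcal{D}_R(r,t)$ lying in a string other than $\sigma_x$, and here I would exploit the sharper fact, established in the proof of \cref{propBCCoeffRels}, that the inequalities $(\ast_{i,j})$ all hold with equality. Expanding $\pcan{x}\kl{t} = \sum_v \pre{p}{\mu}_{x,t}^v \pcan{v}$ into the Kazhdan--Lusztig basis and comparing the coefficient of $\kl{z}$ with \cref{lemKLinPCanStringCoeff} gives the equality version of \eqref{eqnCoeffIneq},
\[ \sum_{v \geqslant z} \pre{p}{\mu}_{x,t}^v\, \pre{p}{m}_{z,v} = \delta_{zr \in \mathcal{D}_R(r,t)}\pre{p}{m}_{zr, x} + \delta_{zt \in \mathcal{D}_R(r,t)}\pre{p}{m}_{zt, x}. \]
On the other hand, the congruence above shows that restricting this sum to $v \in \sigma_x$ retains only the neighbour terms $\pre{p}{m}_{z, x_{i-1}} + \pre{p}{m}_{z, x_{i+1}}$, and the equality version of $(\ast_{i,j})$ says exactly that this restricted sum already equals the right-hand side. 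Subtracting, the off-string contribution vanishes:
\[ \sum_{\substack{v \geqslant z \\ v \notin \sigma_x}} \pre{p}{\mu}_{x,t}^v\, \pre{p}{m}_{z,v} = 0. \]
Since all $\pre{p}{\mu}_{x,t}^v$ and $\pre{p}{m}_{z,v}$ lie in $\Z_{\geqslant 0}[v,v^{-1}]$ by \cref{propPCanProps}(iii) and (vi), every summand vanishes individually; taking $v = z$ and using $\pre{p}{m}_{z,z} = 1$ forces $\pre{p}{\mu}_{x,t}^z = 0$. Combined with the same-string case this shows $\pre{p}{\mu}_{x,t}^z$ is non-zero only when $z$ is a neighbour of $x$ in $\sigma_x$.

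The one genuinely delicate point—the step I would check most carefully—is the matching of the two right-hand sides: one must verify that $\{zr, zt\} \cap \mathcal{D}_R(r,t)$ is precisely the set of string-neighbours of $z$ appearing in $(\ast_{i,j})$ (the shortest and longest coset representatives being excluded from $\mathcal{D}_R(r,t)$), so that the full sum and the $\sigma_x$-restricted sum really are being compared against the same quantity. Once that identification is in place, the remainder is pure positivity bookkeeping, and no new estimate beyond \cref{propBCCoeffRels} is needed.
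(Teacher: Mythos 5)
Your proof is correct and is essentially the paper's own argument: the paper deduces the corollary, in one compressed remark, from exactly the ingredients you use — the equality version of \eqref{eqnCoeffIneq}, the fact (established in the proof of \cref{propBCCoeffRels}) that the inequalities $(\ast_{i,j})$ hold with equality, and positivity of the coefficients $\pre{p}{m}_{z,v}$ and $\pre{p}{\mu}_{x,t}^v$ from \cref{propPCanProps}. Your explicit handling of the same-string case via \cref{corParStrCoeffs} and of the identification of $\{zr,zt\}\cap\mathcal{D}_R(r,t)$ with the string-neighbours of $z$ just fills in details the paper leaves implicit.
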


The next result will allow us to apply the star-operations to the study of $p$-cells.
It is a generalization of \cite[(10.4.1), (10.4.2) and (10.4.3)]{LuCellsInAffWeylGrpsI}:

\begin{prop}
   \label{propStructCoeffRels}
   %Let $x, z \in \mathcal{D}_R(r,t)$ and $s \in S$ such that $s x > x$. Denote by
   Let $\sigma_x = \{ x_1 < x_2 < \dots \}$ \textup{(}resp.  $\sigma_z = \{ z_1 < z_2 < \dots \}$\textup{)} 
   be two right $\langle r, t \rangle$-strings consisting of $m - 1$ elements. 
   % of $x$ \textup{(}resp. $z$\textup{)} consisting of $m - 1$ elements. 
   For any $s \in S \setminus \desc{L}(x_1)$ all the relations stated in 
   \cref{propBCCoeffRels} hold with $\pre{p}{m}_{z_j, x_i}$ replaced by 
   $\pre{p}{\mu}_{s, x_i}^{z_j}$.
\end{prop}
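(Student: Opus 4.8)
The plan is to run the proof of \cref{propBCCoeffRels} with the extra factor $\pcan{s}$ attached on the left, so that everything reduces to the very same system of inequalities $(\ast_{i,j})$ already solved there. As in that proof I write $s_i \in \{r,t\}$ for the raising generator at $x_i$ (so $x_i s_i > x_i$) and $s_i' \in \{r,t\}$ for the other one, which then lies in $\desc{R}(x_i)$. One checks that the left descent set is constant along a right $\langle r,t\rangle$-string, so the hypothesis $s \notin \desc{L}(x_1)$ gives $s\notin\desc{L}(x_i)$ for all $i$ and the $\pre{p}{\mu}_{s,x_i}^{z_j}$ are genuine structure coefficients; by \cref{propPCanProps}(vi) they are self-dual with non-negative coefficients, which is all the positivity the argument needs.

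The one new ingredient is the left-handed version of \cref{lemCleDesc}, which follows from it via \cref{corLeftRightEq} and \cref{propPCanProps}(iv): if $\pre{p}{\mu}_{s,x}^w \neq 0$, then $w \cle{L} x$, and hence $\desc{R}(x)\subseteq\desc{R}(w)$. With this in hand I would fix $i,j$ and compute, in two ways, the coefficient $Q$ of $\pcan{z_j}$ in the triple product $\pcan{s}\pcan{x_i}\pcan{s_i}$.

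Expanding $\pcan{s}\pcan{x_i}=\sum_w \pre{p}{\mu}_{s,x_i}^w\pcan{w}$ first and then multiplying by $\pcan{s_i}$ gives $Q=\sum_w \pre{p}{\mu}_{s,x_i}^w\,\pre{p}{\mu}_{w,s_i}^{z_j}$. The descent inclusion forces $s_i'\in\desc{R}(w)$ for every contributing $w$; since $z_j\in\mathcal{D}_R(r,t)$, this kills the diagonal term (which can occur only when $s_i\in\desc{R}(z_j)$, where the inclusion already forces $\pre{p}{\mu}_{s,x_i}^{z_j}=0$) and, together with \cref{corVanishingStructCoeffsInStr}, every out-of-string contribution, leaving exactly $Q=\pre{p}{\mu}_{s,x_i}^{z_{j-1}}+\pre{p}{\mu}_{s,x_i}^{z_{j+1}}$ (with the boundary deltas of \cref{propBCCoeffRels}), the neighbours entering with coefficient $1$ by \cref{corParStrCoeffs}. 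Expanding instead as $\pcan{s}\bigl(\pcan{x_i}\pcan{s_i}\bigr)$ and using $\pcan{x_i}\pcan{s_i}\equiv\pcan{x_{i+1}}+\pcan{x_{i-1}}$ modulo non-string terms (\cref{corParStrCoeffs}), I would discard the non-negative out-of-string summands to obtain the lower bound $Q\geqslant\pre{p}{\mu}_{s,x_{i-1}}^{z_j}+\pre{p}{\mu}_{s,x_{i+1}}^{z_j}$. Comparing the two expressions for $Q$ yields exactly $(\ast_{i,j})$ for the coefficients $\pre{p}{\mu}_{s,x_i}^{z_j}$; letting $i$ and $j$ vary reproduces the full system, and the linear-algebra computation carried out in the proof of \cref{propBCCoeffRels} then forces every inequality to be an equality and delivers all the stated relations.

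The main obstacle is the first computation of $Q$: what makes the argument work is that the out-of-string and diagonal terms vanish \emph{exactly}, so that this side is an honest equality rather than merely a bound. This cleanness is the structure-coefficient analogue of the right-descent vanishing of $\pre{p}{m}$ exploited in \cref{lemKLinPCanStringCoeff}, and here it is supplied entirely by the right-descent inclusion coming from the left cell preorder.
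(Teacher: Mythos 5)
Your proposal is correct and takes essentially the same route as the paper's proof: both compute the coefficient of $\pcan{z_j}$ in the triple product $\pcan{s}\pcan{x_i}\pcan{s_i}$ in two ways, using the left-cell version of \cref{lemCleDesc} together with \cref{corVanishingStructCoeffsInStr} and \cref{corParStrCoeffs} to control string contributions, and then feed the outcome into the linear algebra of \cref{propBCCoeffRels}. The only (harmless) difference is that the paper shows the second expansion is \emph{also} exact---the left-cell descent argument kills every out-of-string contribution on that side as well, so the relations appear directly as equalities---whereas you keep only a positivity lower bound there and let the system of inequalities $(\ast_{i,j})$ from \cref{propBCCoeffRels} force the equalities, which is a valid shortcut.
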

\begin{proof}
   \cref{corRStringsInRCells} below shows that all elements in $\sigma_x$ lie
   in the same right $p$-cell. We therefore deduce from \cref{lemCleDesc} that 
   all elements in $\sigma_x$ have the same left descent set and satisfy 
   $s x_i > x_i$ for $1 \leqslant i \leqslant m-1$.
   
   Write $x_0$ (resp. $x_m$) for the shortest (resp. longest) element in the 
   right $\langle r, t \rangle$-coset of $x$. Similarly for $z_0$ and $z_m$.
   We are interested in the structure coefficients $\pre{p}{\mu}_{s, x_i}^{z_j}$
   for $1 \leqslant i, j \leqslant m-1$. To simplify notation, write 
   $a_{j, i} = \pre{p}{\mu}_{s, x_i}^{z_j}$.
   
%    The version of \cref{lemCleDesc} for left cells has the following consequences
%    in this situation:
%    \begin{itemize}
%     \item When expressing $\kl{s} \pcan{x_m}$ in the $p$-canonical basis, the
%           coefficients in front of $\pcan{z_j}$ for $0 \leqslant j < m$ vanish. 
%          %$\pre{k}{B}_{z_j}$ for $0 \leqslant j < m$ cannot occur as summand in $\pre{k}{B}_s\pre{k}{B}_{x_m}$.
%     \item Any element $z_j \in \sigma_z$ such that $\pcan{z_j}$ occurs with 
%           non-trivial coefficient in $\kl{s} \pcan{x_i}$ for $1\leqslant i \leqslant m-1$ 
%           is taken up in the weak right Bruhat order by the same simple reflection in $\{r, t\}$ as $x_i$.
%    \end{itemize}
   
   Fix $1 \leqslant i \leqslant m-1$ arbitrary. We may assume without loss
   of generality $x_i t > x_i$ and thus $r \in \desc{R}(x_i)$. The main idea is 
   to express $\pcan{s} \pcan{x_i} \pcan{t}$ in the $p$-canonical basis in 
   different ways and to analyze the coefficients
   in front of basis elements indexed by elements in $\sigma_z$. Depending
   on which multiplication in $\pcan{s} \pcan{x_i} \pcan{t}$ we first carry out, 
   we get two ways to express this product in terms of the $p$-canonical basis. On the one hand we have:
   \begin{align}
      \pcan{s} \pcan{x_i} \pcan{t} &= (\sum_{u \leqslant s x_i} \pre{p}{\mu}_{s, x_i}^u \pcan{u}) \pcan{t} \nonumber \\
         &= \sum_{\substack{u \leqslant sx_i \\ ut < u}}(v+v^{-1}) \pre{p}{\mu}_{s, x_i}^u \pcan{u}
         + \sum_{\substack{u \leqslant sx_i \wedge w \leqslant ut \\ ut > u}} \pre{p}{\mu}_{s, x_i}^u \pre{p}{\mu}_{u, t}^w \pcan{w}
   \label{eqnProd1}
   \end{align}
   Consider an element $u \leqslant sx_i$ with $\pre{p}{\mu}_{s, x_i}^u \ne 0$.   
   The version of \cref{lemCleDesc} for left cells implies $r \in \desc{R}(u)$ and
   in particular $u$ is not minimal in its right $\langle r, t\rangle$-coset. This 
   implies:
   \[ u \in \mathcal{D}_R(r, t) \Leftrightarrow ut > u \]
   Assume $u \in \mathcal{D}_R(r,t)$ (otherwise $\pcan{u}\pcan{t} = (v+v^{-1})\pcan{u}$
   does not contain any $p$-canonical basis elements indexed by elements in $\sigma_z$).
   Combining \cref{corVanishingStructCoeffsInStr} and \cref{corParStrCoeffs} 
   we have full control over the $p$-canonical basis 
   elements indexed by elements in $\mathcal{D}_R(r,t)$ that occur in 
   $\pcan{u} \pcan{t}$. Therefore, the only $p$-canonical basis elements that are
   indexed by elements in $\sigma_z$ and that occur with non-trivial coefficient
   in \eqref{eqnProd1} are the following:
   \[ \sum_{1 \leqslant j \leqslant m-1} a_{j, i}
      (\delta_{z_{j+1} \in \sigma_z} \pcan{z_{j+1}} + 
      \delta_{z_{j-1} \in \sigma_z} \pcan{z_{j-1}}) \]

   On the other hand we can rewrite the product as follows:
   \begin{align}
   \pcan{s} \pcan{x_i} \pcan{t} &= \pcan{s} (\sum_{u \leqslant x_i t} \pre{p}{\mu}_{x_i, t}^u \pcan{u}) \nonumber \\
    &= \sum_{\substack{w \leqslant x_it \\ sw < w}}(v+v^{-1}) \pre{p}{\mu}_{x_i,t}^w \pcan{w}
         + \sum_{\substack{u \leqslant x_i t \wedge w \leqslant su \\ su > u}} \pre{p}{\mu}_{x_i, t}^u \pre{p}{\mu}_{s, u}^w \pcan{w}
   \label{eqnProd2}
   \end{align}
   Consider an element $u \leqslant x_i t$ with $\pre{p}{\mu}_{x_i, t}^u \ne 0$.
   It follows that $u$ has $t$ in its right descent set. The version of 
   \cref{lemCleDesc} for left cells shows that $\pcan{s} \pcan{u}$ can only
   contribute $p$-canonical basis elements indexed by elements in $\mathcal{D}_R(r,t)$
   if $u$ lies in $\mathcal{D}_R(r,t)$ itself. (Observe that 
   \cref{corVanishingStructCoeffsInStr} together with the fact that elements in $\sigma_x$
   have the same left descent set implies that the sum $\sum_{\substack{w \leqslant x_it \\ sw < w}}(v+v^{-1}) \pre{p}{\mu}_{x_i,t}^w \pcan{w}$ cannot contribute any 
   $p$-canonical basis elements indexed by elements in $\sigma_z$.)
   Using \cref{corVanishingStructCoeffsInStr} and \cref{corParStrCoeffs} 
   again, we see that the only $p$-canonical basis elements that 
   are indexed by elements in $\sigma_z$ and that occur with non-trivial coefficient
   in \eqref{eqnProd2} are the following:
   %\[ \sum_{1 \leqslant j \leqslant m-1} (\delta_{x_{i+1} \in \sigma_x} 
   %   \pre{p}{\mu}_{s, x_{i+1}}^{z_j} + \delta_{x_{i-1} \in \sigma_x} 
   %   \pre{p}{\mu}_{s, x_{i-1}}^{z_j}) \pcan{z_j} \]
   \[ \sum_{1 \leqslant j \leqslant m-1} (\delta_{x_{i+1} \in \sigma_x} 
      a_{j, i+1} + \delta_{x_{i-1} \in \sigma_x} a_{j, i-1}) \pcan{z_j} \]
   Comparing coefficients in front of $\pcan{z_j}$ we get:
   %\[ \delta_{z_{j-1} \in \sigma_z} \pre{p}{\mu}_{s, x_i}^{z_{j-1}} + 
   %   \delta_{z_{j+1} \in \sigma_z} \pre{p}{\mu}_{s, x_i}^{z_{j+1}} =
   %   \delta_{x_{i+1} \in \sigma_x} \pre{p}{\mu}_{s, x_{i+1}}^{z_j} + 
   %   \delta_{x_{i-1} \in \sigma_x} \pre{p}{\mu}_{s, x_{i-1}}^{z_j} \]
   \[ \delta_{z_{j-1} \in \sigma_z} a_{j-1, i} + 
      \delta_{z_{j+1} \in \sigma_z} a_{j+1, i} =
      \delta_{x_{i+1} \in \sigma_x} a_{j, i+1} + 
      \delta_{x_{i-1} \in \sigma_x} a_{j, i-1} \]

   Letting $i$ and $j$ vary, we see that the $a_{j, i}$'s satisfy precisely the inequalities 
   \eqref{eqnEven1} - \eqref{eqnEven13} and \eqref{eqnOdd1} - \eqref{eqnOdd12} (with equality).
\end{proof}

\begin{cor}
   \label{corStructCoeffsStar}
   Let $x, y \in \mathcal{D}_R(r,t)$ and $s \in S$ such that $sx > x$. Then we have
   %\[ \pre{k}{B}_{y} \overset{\oplus}{\subseteq} \pre{k}{B}_s \pre{p}{B}_x
   %   \Rightarrow \pre{k}{B}_{y^{\ast}} \overset{\oplus}{\subseteq} \pre{k}{B}_s \pre{p}{B}_{x^{\ast}} \]
   \[ \pre{p}{\mu}_{s, x}^y = \pre{p}{\mu}_{s, x^{\ast}}^{y^{\ast}} \]
   where $(-)^{\ast}$ is the right star-operation with respect to $\{r, t\}$.
\end{cor}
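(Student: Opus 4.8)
The plan is to reduce the statement immediately to \cref{propStructCoeffRels}, in exactly the same way that the symmetry $\pre{p}{m}_{z,x} = \pre{p}{m}_{z^{\ast}, x^{\ast}}$ was read off from \cref{propBCCoeffRels}. First I would record the combinatorial set-up: since $x, y \in \mathcal{D}_R(r,t)$, each lies in a unique right $\langle r, t\rangle$-string of $m-1$ elements (see \cref{defStringsStars}), say $x = x_i$ in $\sigma_x = \{x_1 < \dots < x_{m-1}\}$ and $y = z_j$ in $\sigma_z = \{z_1 < \dots < z_{m-1}\}$. By the definition of the right star operation, the $k$-th element of a string is sent to the $(m-k)$-th, so $x^{\ast} = x_{m-i}$ and $y^{\ast} = z_{m-j}$.

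Next I would check that the hypothesis of \cref{propStructCoeffRels} is satisfied, namely $s \in S \setminus \desc{L}(x_1)$. This is precisely where the assumption $sx > x$ is used: all elements of $\sigma_x$ lie in a common right $p$-cell (\cref{corRStringsInRCells}) and hence share a left descent set (\cref{lemCleDesc}), so $sx > x$ is equivalent to $sx_1 > x_1$, i.e. $s \notin \desc{L}(x_1)$.

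With this in place, \cref{propStructCoeffRels} applies to the pair of strings $\sigma_x, \sigma_z$: writing $a_{j,i} \defeq \pre{p}{\mu}_{s, x_i}^{z_j}$, these coefficients satisfy every relation listed in \cref{propBCCoeffRels}. Among those relations is the symmetry $a_{j,i} = a_{m-j, m-i}$ for all $1 \leqslant i, j \leqslant m-1$ --- for $m \in \{3,4\}$ it appears verbatim (or trivially, for the diagonal coefficients), and for $m = 6$ it is an immediate consequence of the listed identities. Reading this single relation off gives $\pre{p}{\mu}_{s,x}^y = a_{j,i} = a_{m-j, m-i} = \pre{p}{\mu}_{s, x^{\ast}}^{y^{\ast}}$, as claimed.

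There is no genuinely new computation here; the only point demanding care is the bookkeeping that matches the abstract index reflection $(i,j) \mapsto (m-i, m-j)$ with the geometric star involution $w \mapsto w^{\ast}$ on the two strings, together with the verification that $sx > x$ is exactly the condition placing $\sigma_x$ under the hypotheses of \cref{propStructCoeffRels}. Once that proposition is granted, the corollary follows just as painlessly as the earlier $\pre{p}{m}$-statement did from \cref{propBCCoeffRels}.
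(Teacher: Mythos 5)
Your proposal is correct and is essentially the paper's own (implicit) argument: the paper states this corollary without proof as an immediate consequence of \cref{propStructCoeffRels}, exactly mirroring how the symmetry $\pre{p}{m}_{z,x} = \pre{p}{m}_{z^{\ast},x^{\ast}}$ was read off from \cref{propBCCoeffRels}. Your added bookkeeping --- identifying $x^{\ast} = x_{m-i}$, $y^{\ast} = z_{m-j}$, using \cref{corRStringsInRCells} and \cref{lemCleDesc} to translate $sx > x$ into the hypothesis $s \notin \desc{L}(x_1)$, and checking that the listed relations imply $a_{j,i} = a_{m-j,m-i}$ --- is precisely the verification the paper leaves to the reader.
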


Using the $\Z[v, v^{-1}]$-linear anti-involution $\iota$ on $\heck$ we can translate
all the results in this section about right strings and right star-operations
into results about left strings and left star-operations.

\todo[inline]{Understand the structure coefficients in $B_2$ for $p = 2$ 
and in $G_2$ for $p \in \{2, 3\}$ and check what holds in these cases!}

\subsection{Consequences for \texorpdfstring{$p$}{p}-Cells}
\label{secStrCells}

As we have seen in the last section one needs some assumptions on $p$ for 
the left and right star-operations to be well-behaved. Therefore, 
we keep these assumptions throughout this subsection. Fix for the rest of the 
section two simple reflections $r, t \in S$ with $3 \leqslant m \defeq m_{r, t} < \infty$.
Throughout the section we consider the right star-operation $(-)^{\ast}$ with respect to $\{r, t\}$.
%\[ p > \begin{cases}
%          1 & \text{if } m_{r,t} = 3 \\
%          2 & \text{if } m_{r,t} = 4 \\
%          3 & \text{if } m_{r,t} = 6
%       \end{cases}\]

Under these assumptions, the $p$-cells in any finite Weyl group of rank $2$ coincide
with the Kazhdan-Lusztig cells. Therefore, \cref{thmParaCombPCells} implies for 
$I = \{r, t\} \subseteq S$:

\begin{lem}
   \label{corRStringsInRCells}
   Let $\sigma$ be any right $\langle r, t\rangle$-string.
   Then all elements of $\sigma$ lie in the same right $p$-cell of $W$.
\end{lem}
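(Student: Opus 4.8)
The plan is to reduce everything to the dihedral parabolic and then transport the answer to $W$ via parabolic compatibility. Set $I = \{r, t\}$, so that $W_I = \langle r, t \rangle$ is a finite dihedral Weyl group of type $A_2$, $B_2$ or $G_2$ (according as $m = 3, 4, 6$); in particular $I$ is finitary. By the assumption on $p$ recorded just before the lemma, the $p$-cells of $W_I$ agree with its Kazhdan-Lusztig cells. By \cref{defStringsStars}, the string $\sigma$ has the form $\sigma = \widetilde{w} \cdot \sigma_0$, where $\widetilde{w} \in W^I$ is the minimal-length representative of the coset $\sigma \langle r, t\rangle \in W/\langle r, t\rangle$, and $\sigma_0 = \{ {}_x\hat{k} \mid 1 \leqslant k < m \} \subseteq W_I$ is the corresponding right $\langle r, t\rangle$-string inside $W_I$ itself (i.e. the case $\widetilde{w} = \Wid$). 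Since $\widetilde{w} \in W^I$, one has $l(\widetilde{w}\cdot {}_x\hat{k}) = l(\widetilde{w}) + k$, so these are genuinely the $m-1$ elements of the string.

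First I would show that $\sigma_0$ is a single right Kazhdan-Lusztig cell of $W_I$. The two strings $\{ {}_r\hat{k} \mid 1 \leqslant k < m\}$ and $\{ {}_t\hat{k} \mid 1 \leqslant k < m \}$ are precisely the sets of elements of $W_I \setminus \{\Wid, w_0\}$ with left descent set $\{r\}$, respectively $\{t\}$. By \cref{lemCleDesc} any two elements in a common right cell share a left descent set, so every right cell of $W_I$ other than $\{\Wid\}$ and $\{w_0\}$ is contained in one of these two strings; conversely the classical description of cells in rank-two Weyl groups shows that each of the two strings is a single right cell. Hence $\sigma_0$ is exactly one right Kazhdan-Lusztig cell, and therefore (under our hypotheses on $p$) exactly one right $p$-cell of $W_I$.

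To conclude I would invoke parabolic compatibility. For any $y, z \in \sigma_0$ we have $y \ceq{R} z$ in $W_I$. Applying \cref{thmParaCombPCells} to the finitary subset $I$ and to $x = \widetilde{w} \in W^I$, the equivalence $z \cle{R} y \Leftrightarrow \widetilde{w} z \cle{R} \widetilde{w} y$ (together with its counterpart with $y$ and $z$ swapped) yields $\widetilde{w} y \ceq{R} \widetilde{w} z$ in $W$. As $y$ and $z$ range over $\sigma_0$, this shows that all elements $\widetilde{w}\cdot {}_x\hat{k}$ of $\sigma$ lie in one and the same right $p$-cell of $W$, which is the assertion.

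The only genuinely computational input is the dihedral cell computation in the second step; everything else is a formal application of \cref{thmParaCombPCells}. I therefore expect that step to be the main point requiring care, but it presents no real obstacle: the rank-two case is classical, and under our standing assumptions on $p$ the $p$-canonical and Kazhdan-Lusztig bases coincide on $W_I$ (see \cite[\S5.1, \S5.2 and \S5.6]{JW}), so the two cell structures on $W_I$ are literally identical.
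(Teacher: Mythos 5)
Your proof is correct and is essentially the paper's own argument: the paper likewise observes that under the standing assumptions on $p$ the $p$-cells of the rank-two parabolic $\langle r, t\rangle$ coincide with its Kazhdan-Lusztig cells (each right string in the dihedral group being a single right cell), and then deduces the lemma directly from \cref{thmParaCombPCells}. The only difference is presentational: you spell out the dihedral cell identification via \cref{lemCleDesc} and the coset bookkeeping with $\widetilde{w} \in W^I$, which the paper leaves implicit.
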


In particular, we have the following result:
\begin{cor}
   \label{lemRStarPCell}
   For all $x \in \mathcal{D}_R(r,t)$, $x$ and $x^{\ast}$ lie in the same right $p$-cell.
\end{cor}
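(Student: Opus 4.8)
The plan is to observe that the right star-operation keeps every element inside its own right $\langle r, t\rangle$-string, so the statement reduces immediately to \cref{corRStringsInRCells}.

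First I would invoke \cref{defStringsStars}: any $x \in \mathcal{D}_R(r,t)$ lies in a unique right $\langle r, t\rangle$-string. Concretely, letting $\widetilde{x} \in W^{\{r,t\}}$ be the minimal-length representative of the coset $x\langle r,t\rangle$, one may write $x = \widetilde{x}\cdot{}_y\hat{k}$ for a unique choice of $y \in \{r,t\}$ and $1 \leqslant k < m$; the corresponding right $\langle r, t\rangle$-string is $\sigma = \{\widetilde{x}\cdot{}_y\hat{k} \; \vert \; 1 \leqslant k < m\}$. Next I would compute the star-image: by definition $x^{\ast} = \widetilde{x}\cdot{}_y\widehat{(m-k)}$. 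Since $1 \leqslant m-k < m$ and the leading letter $y$ is unchanged, $x^{\ast}$ is again of the form $\widetilde{x}\cdot{}_y\hat{k'}$ with the same $y$ and the same minimal representative $\widetilde{x}$. Hence $x^{\ast}$ lies in exactly the same string $\sigma$ as $x$.

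Finally, \cref{corRStringsInRCells} asserts that all elements of $\sigma$ lie in a single right $p$-cell of $W$; applying this to $x, x^{\ast} \in \sigma$ yields the claim. There is no real obstacle here — the content has already been absorbed into \cref{corRStringsInRCells}, whose proof in turn rests on the parabolic compatibility \cref{thmParaCombPCells} together with the coincidence of $p$-cells and Kazhdan--Lusztig cells in the rank-two parabolic $\langle r, t\rangle$ (which is precisely what the standing assumption on $p$ guarantees). The only point worth double-checking is that the star-operation does not interchange the $r$-string with the $t$-string; this is immediate from the formula $k \mapsto m-k$, which preserves both the leading letter $y$ and the coset representative $\widetilde{x}$.
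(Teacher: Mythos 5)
Your proof is correct and is exactly the paper's argument: the paper deduces \cref{lemRStarPCell} from \cref{corRStringsInRCells} via the phrase ``in particular,'' the implicit content being precisely your observation that $(-)^{\ast}$ sends $\widetilde{x}\cdot{}_y\hat{k}$ to $\widetilde{x}\cdot{}_y\widehat{(m-k)}$ and hence preserves the right $\langle r,t\rangle$-string. Your write-up merely makes explicit what the paper leaves to the reader.
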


The following important result follows from \cref{corStructCoeffsStar}:
\begin{thm}
   \label{propLCellRStar}
   For $x, y \in \mathcal{D}_R(r, t)$ we have:
   \[ x \cle{L} y \Leftrightarrow x^{\ast} \cle{L} y^{\ast} \]
   In particular, if $x$ and $y$ lie the same left $p$-cell, then the same holds
   for $x^{\ast}$  and $y^{\ast}$.
\end{thm}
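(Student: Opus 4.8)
The plan is to run each generating relation of the left $p$-cell preorder through the structure-coefficient symmetry \cref{corStructCoeffsStar}. Since $(-)^{\ast}$ is an involution on $\mathcal{D}_R(r,t)$, it is enough to establish one implication, say $x \cle{L} y \Rightarrow x^{\ast}\cle{L}y^{\ast}$: the converse follows by applying it to $x^{\ast},y^{\ast}$, and the final ``in particular'' clause follows by combining the two directions into the statement about $\ceq{L}$. By \cref{lemGenCells} the relation $x\cle{L}y$ is witnessed by a chain $x=x_0\multLe{L}x_1\multLe{L}\dots\multLe{L}x_k=y$, so I would begin by analysing a single step.

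First I would treat a single step $x\multLe{L}y$ with both endpoints in $\mathcal{D}_R(r,t)$, that is $\pre{p}{\mu}_{s,y}^x\neq 0$ for some $s\in S$. If $s\in\desc{L}(y)$ then $\kl{s}\pcan{y}=(v+v^{-1})\pcan{y}$ by \cref{lemMultFormPCan}, which forces $x=y$ and hence $x^{\ast}=y^{\ast}$, so the step transports trivially. If instead $sy>y$, then applying \cref{corStructCoeffsStar} to $y$ (which satisfies $sy>y$) and $x$ yields $\pre{p}{\mu}_{s,y}^x=\pre{p}{\mu}_{s,y^{\ast}}^{x^{\ast}}$, so the right-hand side is non-zero and $x^{\ast}\multLe{L}y^{\ast}$. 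Consequently any chain lying entirely inside $\mathcal{D}_R(r,t)$ transports verbatim under $(-)^{\ast}$.

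The main obstacle is that a chain realising $x\cle{L}y$ between two elements of $\mathcal{D}_R(r,t)$ need not remain inside $\mathcal{D}_R(r,t)$: left multiplication by $\kl{s}$ moves between right $\langle r,t\rangle$-cosets and may pass through a coset-minimal representative, on which $(-)^{\ast}$ is undefined and where the structure-coefficient symmetry genuinely fails. For instance $\pre{p}{\mu}_{r,\Wid}^{r}=1$ whereas $\pre{p}{\mu}_{r,\Wid}^{rt}=0$, even though $r^{\ast}=rt$ when $m=3$. Thus the single-step argument does not immediately chain, and one cannot repair this by naively extending $(-)^{\ast}$ to the coset-minimal elements.

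To close this gap I would promote the single-step identity to the claim that for every $h\in\heck$ and all $x,y\in\mathcal{D}_R(r,t)$ the coefficient of $\pcan{x}$ in $h\pcan{y}$ equals the coefficient of $\pcan{x^{\ast}}$ in $h\pcan{y^{\ast}}$; this implies the theorem at once, with the same $h$. Writing $h$ as a $\Z[v,v^{-1}]$-combination of monomials $\kl{s_1}\cdots\kl{s_l}$ and inducting on $l$, the outer multiplication by $\kl{s_1}$ receives no contribution from coset-maximal intermediate terms, since such an element has both $r$ and $t$ in its right descent set and, by the left analogue of \cref{lemCleDesc}, cannot feed a $\mathcal{D}_R(r,t)$-element on the left; the only dangerous intermediate terms are therefore the coset-minimal ones. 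The heart of the argument — and the part I expect to cost the most effort — is controlling exactly these coset-minimal contributions. Here I would work one right $\langle r,t\rangle$-coset block at a time, using \cref{corParStrCoeffs} to reduce the right-hand multiplicative structure of a block to the dihedral case and \cref{corRStringsInRCells} to locate each string in a single right $p$-cell, and then track the whole block (both strings together with its minimal and maximal elements) through left multiplication, checking that the asymmetry created at a coset-minimal element always cancels by the time the chain returns to $\mathcal{D}_R(r,t)$.
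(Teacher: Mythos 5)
Your first two paragraphs are exactly the right argument, and they are in substance the paper's own: reduce to elementary relations via \cref{lemGenCells}, dispose of the case $s \in \desc{L}(y)$ by \cref{lemMultFormPCan}, and transport each non-trivial step by \cref{corStructCoeffsStar}, using that $(-)^{\ast}$ is an involution to get both directions. The problem is your third paragraph: the ``obstacle'' you identify there is not real, and the programme you then set up to deal with it is both unnecessary and, as written, incomplete --- you explicitly defer its self-declared heart (controlling the coset-minimal contributions) to an unspecified cancellation check, so the proposal as it stands does not close.

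Concretely, every chain realising $x \cle{L} y$ \emph{does} remain inside $\mathcal{D}_R(r,t)$. If $x = x_0 \multLe{L} x_1 \multLe{L} \dots \multLe{L} x_k = y$, then by transitivity each intermediate term satisfies $x \cle{L} x_i \cle{L} y$, so the left analogue of \cref{lemCleDesc} sandwiches the right descent sets: $\desc{R}(y) \subseteq \desc{R}(x_i) \subseteq \desc{R}(x)$. Since $x$ and $y$ lie in $\mathcal{D}_R(r,t)$, the outer sets each meet $\{r,t\}$ in exactly one element (necessarily the same one), hence so does $\desc{R}(x_i)$; in particular no $x_i$ is coset-minimal or coset-maximal. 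Your example with $\Wid$ only shows that the structure-coefficient symmetry fails \emph{at} a coset-minimal element, not that such an element can occur in a chain between two elements of $\mathcal{D}_R(r,t)$ --- it cannot, because $\desc{R}(y) \subseteq \desc{R}(\Wid) = \varnothing$ is impossible. The same one-line observation also settles the step you left open in your last paragraph: a coset-minimal $z$ can never appear with non-zero coefficient in $h'\pcan{y}$ for $y \in \mathcal{D}_R(r,t)$, since that would mean $z \cle{L} y$ and again force $\desc{R}(y) \subseteq \desc{R}(z)$. So there is no ``asymmetry to cancel'' and no blockwise tracking to do; with that, your induction over monomials does close (coset-maximal terms are killed exactly as you say, terms in $\mathcal{D}_R(r,t)$ are handled by \cref{corStructCoeffsStar} or by \cref{lemMultFormPCan} together with \cref{lemRStarPCell} and \cref{lemCleDesc}), but the whole detour through the stronger all-$h$ statement is superfluous: once the chain is known to stay in $\mathcal{D}_R(r,t)$, your first two paragraphs already constitute the proof.
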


\begin{defn}
   For $r, t \in S$ with $rt \ne tr$ and $x \in \mathcal{D}_R(r,t)$ we denote by 
   $\sigma_x$ the right $\langle r, t \rangle$-string through $x$.
   Define $\mathfrak{T}_{r,t}(x) \defeq \{xr, xt\} \cap \mathcal{D}_R(r,t)$ to 
   be the neighbouring elements of $x$ in $\sigma_x$. 
   
   View $\mathfrak{T}_{r,t}$ as a map $\sigma_x \rightarrow \mathcal{P}(\sigma_x)$
   where $\mathcal{P}(\sigma_x)$ denotes the power set of $\sigma_x$.
   We define $\mathfrak{T}_{r,t}^2: \sigma_x \rightarrow \mathcal{P}(\sigma_x)$
   to be the map sending $y \in \sigma_x$ to $\bigcup_{z \in \mathfrak{T}_{r,t}(y)} 
   \mathfrak{T}_{r,t}(z)$. For $l\geqslant 2$, the map $\mathfrak{T}_{r,t}^l$ is defined 
   inductively in a similar fashion.
\end{defn}

Actually, one can characterize precisely the possible left $p$-cell preorder relations
among elements in right $\langle r, t \rangle$-strings:
\begin{prop}
   \label{propLCellPreorderRelsInRString}
   Let $\sigma = \{ x_1 < x_2 < \dots \}$ \textup{(}resp.  $\sigma' = \{ y_1 < y_2 < \dots \}$\textup{)} 
   be two right $\langle r, t \rangle$-strings consisting of $m-1$ elements. 
   Up to possibly interchanging the roles of $\sigma$ and $\sigma'$, the set of 
   left $p$-cell preorder relations between the elements of these
   two strings is one of the following:
   \begin{align}
      \text{no relation: } & \{ \} \tag{$\varnothing$} \label{eqnNoRel} \\
      \text{trivial case: } & \{ x_i \cle{L} y_i \; \vert \; \forall 1 
            \leqslant i \leqslant m-1 \} \tag{T} \label{eqnTrivCase} \\
      \text{permuted case: } & \{ x_i \cle{L} y_{\pi(i)} \; \vert \; \forall 1 
            \leqslant i \leqslant m-1 \} \tag{P} \label{eqnPermCase} \\
      \text{neighbour case: } 
      		&\{ x_i \cle{L} y \; \vert \; \forall 1 \leqslant i 
            \leqslant m-1, y \in \mathfrak{T}^k_{r,t}(y_i) \} 
            \tag{$N_k$} \label{eqnNCase} \\
      \substack{\text{permuted }\\\text{neighour case: }} 
      		&\{ x_i \cle{L} y_{\pi(j)} \; \vert \; \forall 1 \leqslant i 
            \leqslant m-1, y_j \in \mathfrak{T}^l_{r,t}(y_i) \} 
            \tag{$PN_l$} \label{eqnPNCase} \\
      \text{zig-zag case: } 
      		&\{ x_{i-1} \cle{L} y_{i+1}, x_i \cle{L} y_i, x_{i+1} \cle{L} y_{i-1} 
      			\; \vert \; \forall 2 \leqslant i \leqslant m-2 \} 
            \tag{$Z$} \label{eqnZCase}
   \end{align}
   for $1 \leqslant k \leqslant m-2$, $1 \leqslant l \leqslant m-4$ where $\pi = (1, m-1)(2,  m-2)$ is a permutation of $\{1, 2, \dots, m-1\}$.
   
%    In particular, for $m \in \{4,6\}$ if $x_i \ceq{L} y_j$ with $i+j$ odd, then 
%    \begin{align}
%       &x_1 \ceq{L} x_{2k-1} \text{ for } 2 \leqslant k \leqslant \frac{m}{2} \text{ and}\\
%       &x_2 \ceq{L} x_{2k} \text{ for } 1 \leqslant k \leqslant \frac{m-2}{2} \text{.}
%    \end{align}
\end{prop}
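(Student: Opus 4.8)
The plan is to reduce the classification to the combinatorics of the structure-coefficient matrix and then enumerate. By \cref{lemGenCells} the left $p$-cell preorder is generated by the elementary relations $\multLe{L}$, so the relations between $\sigma$ and $\sigma'$ are recorded by the non-vanishing of the structure coefficients $\pre{p}{\mu}_{s,x_i}^{y_j}$ (the coefficient of $\pcan{y_j}$ in $\pcan{s}\pcan{x_i}$) as $s$ ranges over $S$. By \cref{corRStringsInRCells} and \cref{lemCleDesc} all elements of a string share a left descent set, so only $s \notin \desc{L}(x_1)$ contribute. First I would fix such an $s$, assemble the matrix $A^{(s)} = (\pre{p}{\mu}_{s,x_i}^{y_j})_{i,j}$, and record that, up to interchanging the roles of $\sigma$ and $\sigma'$, the two strings may be taken in the order for which the generating relations read $x_i \cle{L} y_j$.

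By \cref{propStructCoeffRels} every such matrix satisfies the linear relations of \cref{propBCCoeffRels}, which express each entry as a $0/1$-combination of the entries of the first column $\{\pre{p}{\mu}_{s,x_1}^{y_j}\}_j$. Since these coefficients lie in $\Z_{\geqslant 0}[v,v^{-1}]$ by \cref{propPCanProps}, there is no cancellation, so the support of $A^{(s)}$ — and hence, taking the union over $s$, the entire set of generating relations between the two strings — is determined purely combinatorially by the subset $J \subseteq \{1,\dots,m-1\}$ of indices $k$ for which $\pre{p}{\mu}_{s,x_1}^{y_k}\neq 0$ for some admissible $s$. Moreover \cref{propPCanProps}(v) forces all non-zero entries to have $i+j$ of a single fixed parity (determined by whether $\sigma$ and $\sigma'$ begin with the same or with opposite simple reflections); this is exactly the decoupling into the two independent subsystems used in the proof of \cref{propBCCoeffRels}, so $J$ lies in one residue class mod $2$.

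The core is then a finite enumeration. For each admissible $J$ I would substitute into the explicit relations of \cref{propBCCoeffRels} (separately for $m=3,4,6$), read off the resulting support of $A$, and match it against the six families: $J=\varnothing$ gives \eqref{eqnNoRel}; the extreme singletons $\{1\}$ and $\{m-1\}$ give \eqref{eqnTrivCase} and \eqref{eqnPermCase}; the central index gives \eqref{eqnZCase}; and the remaining intervals produce \eqref{eqnNCase} and \eqref{eqnPNCase}, with the iterated neighbour map $\mathfrak{T}^k_{r,t}$ accounting for the ``spreading'' caused by the relations linking neighbouring rows and columns in \cref{propBCCoeffRels}. The star-symmetry $\pre{p}{\mu}_{s,x}^y = \pre{p}{\mu}_{s,x^{\ast}}^{y^{\ast}}$ of \cref{corStructCoeffsStar}, which acts on the strings by the reversal $k\mapsto m-k$, together with $x \cle{L} y \Leftrightarrow x^{\ast}\cle{L} y^{\ast}$ (\cref{propLCellRStar}), both halves the enumeration and explains the permutation $\pi=(1,m-1)(2,m-2)$ appearing in \eqref{eqnPermCase} and \eqref{eqnPNCase}; a short check confirms that each resulting relation set is already transitively closed between the two strings, so it is the genuine restriction of the preorder.

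The step I expect to be the main obstacle is ruling out the support sets $J$ that correspond to no listed family — concretely, for $m=6$ the ``gapped'' set $J=\{1,5\}$ (extremes present, centre absent), which the linear relations and self-duality alone do \emph{not} forbid. One must show that $\pre{p}{\mu}_{s,x_1}^{y_1}\neq 0$ and $\pre{p}{\mu}_{s,x_1}^{y_5}\neq 0$ force $\pre{p}{\mu}_{s,x_1}^{y_3}\neq 0$, i.e. that the non-zero coefficients along a string occupy an interval. This is a genuine positivity statement about the decomposition of $\pcan{s}\pcan{x_1}$ that goes beyond the linear bookkeeping, and I would try to establish it via the ``sliding'' description of string coefficients underlying \cref{lemKLinPCanStringCoeff} (or directly from the connected-support behaviour of the translate of $\pre{k}{B}_{x_1}$). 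Once this convexity of $J$ is in hand, the enumeration closes and every admissible pattern is one of the six listed.
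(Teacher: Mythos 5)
The first reduction in your proposal is where the argument breaks down. You identify the restriction of $\cle{L}$ to the pair of strings with the (transitive closure of the) one-step relations between them, i.e.\ with the supports of the matrices $A^{(s)} = (\pre{p}{\mu}_{s,x_i}^{y_j})_{i,j}$. But the preorder is generated by elementary relations passing through \emph{arbitrary} elements of $W$: a relation $y_j \cle{L} x_i$ is witnessed by a chain $y_j = z_0 \multLe{L} z_1 \multLe{L} \dots \multLe{L} z_n = x_i$, and while the left-cell version of \cref{lemCleDesc} forces each $z_l$ to lie in $\mathcal{D}_R(r,t)$, these intermediate elements lie in \emph{other} right $\langle r,t\rangle$-strings, not in $\sigma \cup \sigma'$. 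Hence the relation set between $\sigma$ and $\sigma'$ is a union of \emph{compositions} of one-step patterns along chains of strings, and your enumeration never sees these compositions. Concretely: $\sigma$ and $\sigma'$ may admit no direct elementary relation at all (every $A^{(s)}$ zero) while being related through a third string, in which case your method would output the empty relation set incorrectly; and your check that each candidate relation set ``is already transitively closed between the two strings'' certifies nothing, because transitivity must be taken inside all of $W$, not just inside $\sigma \cup \sigma'$.

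The composition step is precisely the content of the paper's proof. There, each one-step pattern is encoded as a multi-valued function $f\colon \sigma_y \rightarrow \mathcal{P}(\sigma_x)$; positivity together with \cref{propStructCoeffRels} shows (much as in your second paragraph) that every such $f$ is built from the five basic functions $\id$, $a$, $b$, $c$, $\pi$ (in the case $m=6$); and the heart of the proof is then the computation of the monoid these functions generate under composition --- $a \circ \pi = a$, $b \circ \pi = c$, $a \circ b = b^3$, $a^2 = b^4 = b^{2k}$, and so on --- which yields the normal forms $\id$, $\pi$, $b^k$ for $1 \leqslant k \leqslant 4$, $b^l \circ \pi$ for $1 \leqslant l \leqslant 2$, and $a$, i.e.\ exactly the cases in the statement. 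Note that the cases \eqref{eqnNCase} with $k \geqslant 2$ and \eqref{eqnPNCase} are intrinsically compositional ($b^k$ and $b^l \circ \pi$); your one-step analysis can produce them only by accident of a large support $J$, and it cannot show that nothing outside the list arises from chains. Your worry about the gapped support $J = \{1,5\}$ is a fair observation (the linear relations plus positivity alone do not exclude the one-step pattern $\id \cup \pi$, and the paper is terse on this point), but resolving it would not close the main gap: without a normal-form analysis of arbitrary compositions you have classified the generating relations between the two strings, not the preorder.
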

\begin{proof}
%    For an elementary left $p$-cell preorder relation $y \multGe{L} x$ 
%    between elements in $\mathcal{D}_R(r, t)$, denote by 
%    $\sigma_x = \{ x_1 < x_2 < \dots \}$ (resp. $\sigma_y = \{ y_1 < y_2 < \dots \}$) 
%    the right $\langle r, t \rangle$-string of $x$ (resp. $y$). 

   The proof strategy is as follows: By applying \cref{propStructCoeffRels} to 
   the relation $y \multGe{L} x$ via $s \in S$, we get other elementary 
   left $p$-cell relations between the elements in $\sigma_x$ and $\sigma_y$. 
   The idea is to encode them in a function $f: \sigma_y \rightarrow \mathcal{P}(\sigma_x)$ 
   such that 
   \[ \pre{p}{\mu}_{s, y_i}^{x_j} \ne 0 \Leftrightarrow  x_j \in f(y_i) \text{.}\]
   The claim of the proposition is that there is a
   normal form of arbitrary finite compositions of such functions. If 
   $g: \sigma_x \rightarrow \mathcal{P}(\sigma_z)$ is another such function, their
   composition $g \circ f: \sigma_y \rightarrow \mathcal{P}(\sigma_z)$ sends
   $y_i$ to $\bigcup_{x_j \in f(y_i)} g(x_j)$. This is simply the composition
   of multi-valued functions. 
   
   In order to simplify notation, we will identify the $j$-th element in a right 
   $\langle r, t \rangle$-string with its position $j$. This allows us to view 
   any such $f$ as a map $\{1, 2, \dots, m-1\}  \rightarrow 
   \mathcal{P}(\{1, 2, \dots, m-1\})$. The composition of such functions 
   is to be understood in a similar fashion. One needs to keep track of the
   start and end string of the whole composition in order to retranslate 
   the function into the set of left $p$-cell relations. Using \cref{corStructCoeffsStar}
   we see that such a map is already fully determined by the images of
   $1,2, \dots, \lceil \frac{m-1}{2} \rceil$.
   
   We will prove the statement for $m = 6$ and leave the other cases to
   the reader. Apart from the identity $\id: i \mapsto \{i\}$ consider the following
   maps where we identify by slight abuse of notation the permutation $\pi$
   with the induced map $\{1, 2, \dots, 5\}  \rightarrow 
   \mathcal{P}(\{1, 2, \dots, 5\})$:
   \begin{alignat*}{4}
         &1 \mapsto \{3\}  &&1 \mapsto \{2\}\\
      a:\;&2 \mapsto \{2,4\} \qquad \qquad \qquad &b:\;&2 \mapsto \{1, 3\}\\
         &3 \mapsto \{1, 3, 5\} && 3 \mapsto \{2, 4\}\\
      &&&\\
         &1 \mapsto \{4\}  &&1 \mapsto\{5\}\\
      c:\;&2 \mapsto \{3, 5\} \qquad \qquad \qquad &\pi:\;&2 \mapsto \{4\}\\
         &3 \mapsto \{2, 4\} &&3 \mapsto \{3\}\\         
   \end{alignat*}
   Using \cref{propStructCoeffRels} it is easy to check that any elementary left
   $p$-cell relation implies relations encoded by one of the functions above, 
   for example $3 \multGe{L} 3$ implies relations encoded by either $\id$, $a$ or $\pi$.
   Analyzing the relations among compositions of these functions, one has:
   \begin{align*}
      a \circ \pi &=\pi \circ a = a\\
      b \circ \pi &= \pi \circ b = c\\
      b \circ c &= c \circ b\\
      \pi^2 &= \id\\
      a \circ b &= b \circ a = b ^ 3\\
      a^k = a^2 &= b^4 = b^{2k} = b^{2k} \circ \pi \text{ for } k \geqslant 2\\
      b^3 &= b^{2k+1} = b^{2k+1} \circ \pi \text{ for } k \geqslant 1
   \end{align*}
   Using these relations, we see that any finite composition 
   of these maps can be reduced to one of the following compositions
   \[ \id, \pi, b^k \text{ for } 1 \leqslant k \leqslant 4, b^l \circ \pi \text{ for } 
      1 \leqslant l \leqslant 2, a \text{.} \]
   These correspond precisely to the cases with at least one relation stated in 
   the proposition.
\end{proof}

\begin{remark}
   It should be noted that for $m < 6$ some of these cases coincide. For 
   example for $m=3$, the permutation $\pi$ is trivial and there are
   only three distinct cases: The permuted case coincides with the
   trivial case. Moreover, the zig-zag case does not contain any relations
   and the permuted neighbour case does not exist. 
   For $m=4$, there are four distinct cases: The zig-zag case reduces to the 
   permuted case and the permuted neighbour case does not exist.
\end{remark}

The normal forms given in the proof of \cref{propLCellPreorderRelsInRString} allow us
to deduce the following equivalences, which show how rigid the combinatorics in this
situation are. The reader should compare these equivalences with \cref{propStructCoeffRels}
which only deals with the generating relations for the $p$-cell preorder. It is a
generalization of \cite[Proposition 4.6 and Remark 4.7]{ShLeftCellsInAffWeylGrps}:

\begin{cor}
   Let $\sigma = \{ x_1 < x_2 < \dots \}$ \textup{(}resp.  $\sigma' = \{ y_1 < y_2 < \dots \}$\textup{)}
   be two right $\langle r, t \rangle$-strings consisting of $m-1$ elements. In addition
   to $x \cle{L} y \Leftrightarrow x^{\ast} \cle{L} y^{\ast}$ for $x \in \sigma$ and
   $y \in \sigma'$ we get the following equivalences:
   \begin{align}
   m = 4 &\Rightarrow \begin{cases}
                        x_1 \cle{L} y_2  \Leftrightarrow x_2 \cle{L} y_1 \\
                        x_2 \cle{L} y_2 \Leftrightarrow (x_1 \cle{L} y_1 \vee x_1 \cle{L} y_3)
                        \end{cases} \\
   m = 6 &\Rightarrow \begin{cases}
                        x_1 \cle{L} y_2  \Leftrightarrow x_2 \cle{L} y_1 \\
                        x_1 \cle{L} y_3  \Leftrightarrow x_3 \cle{L} y_1 \\
                        x_1 \cle{L} y_4  \Leftrightarrow x_4 \cle{L} y_1 \\
                        x_2 \cle{L} y_2  \Leftrightarrow (x_1 \cle{L} y_1 \vee x_1 \cle{L} y_3)\\
                        x_2 \cle{L} y_3  \Leftrightarrow x_3 \cle{L} y_2  \Leftrightarrow 
                                          (x_1 \cle{L} y_2 \vee x_1 \cle{L} y_4)\\
                        x_2 \cle{L} y_4  \Leftrightarrow (x_1 \cle{L} y_3 \vee x_1 \cle{L} y_5)\\
                        x_3 \cle{L} y_3  \Leftrightarrow (x_1 \cle{L} y_1 \vee x_1 \cle{L} y_3 \vee x_1 \cle{L} y_5)
                      \end{cases}
   \end{align}
\end{cor}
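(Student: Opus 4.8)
The plan is to read every listed equivalence directly off the classification of left $p$-cell preorder relations between two right $\langle r, t \rangle$-strings established in \cref{propLCellPreorderRelsInRString}. That proposition shows the incidence set $R \defeq \{ (i,j) \mid x_i \cle{L} y_j \}$ is, up to interchanging the roles of $\sigma$ and $\sigma'$, one of finitely many normal forms $\varnothing$, \eqref{eqnTrivCase}, \eqref{eqnPermCase}, \eqref{eqnNCase}, \eqref{eqnPNCase}, \eqref{eqnZCase} --- equivalently one of the reduced multi-valued maps $\id$, $\pi$, $b^k$, $b^l \circ \pi$, $a$ from the proof of \cref{propLCellPreorderRelsInRString}, together with the empty relation. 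Hence the corollary reduces to a purely finite verification: for each such $R$ (and its transpose $R^{\mathrm{T}}$) one checks that each displayed biconditional is an equality of truth values.

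First I would make each normal form completely explicit as a subset of $\{1, \dots, m-1\}^2$. Here the bookkeeping is halved by the star symmetry $x \cle{L} y \Leftrightarrow x^{\ast} \cle{L} y^{\ast}$ of \cref{propLCellRStar}: since $x_i^{\ast} = x_{m-i}$ and $y_j^{\ast} = y_{m-j}$, every normal form is invariant under $(i,j) \mapsto (m-i, m-j)$, so it suffices to record the images of $1, \dots, \lceil (m-1)/2 \rceil$, exactly as in the proof of \cref{propLCellPreorderRelsInRString}. For instance, for $m = 6$ the zig-zag form \eqref{eqnZCase} is $R = \{(3,1),(2,2),(4,2),(1,3),(3,3),(5,3),(2,4),(4,4),(3,5)\}$ (the map $a$), and reading it against the seven displayed biconditionals shows each side has the same truth value; the same tabulation disposes of $\varnothing$, \eqref{eqnTrivCase}, \eqref{eqnPermCase}, and the neighbour / permuted-neighbour forms $b^k$ and $b^l \circ \pi$.

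Two structural features guarantee the check always succeeds, and they are the real content behind the finite bookkeeping. First, every normal form is symmetric under transposition, $R = R^{\mathrm{T}}$ (the maps $b$, $\pi$ are self-adjoint and commute, so all their compositions are too), which accounts for the plain ``swap'' equivalences $x_i \cle{L} y_j \Leftrightarrow x_j \cle{L} y_i$ such as $x_1 \cle{L} y_2 \Leftrightarrow x_2 \cle{L} y_1$. Second, the disjunction equivalences such as $x_2 \cle{L} y_2 \Leftrightarrow (x_1 \cle{L} y_1 \vee x_1 \cle{L} y_3)$ already hold at the level of generating relations: by \cref{propStructCoeffRels} the coefficients $a_{j,i} = \pre{p}{\mu}_{s, x_i}^{z_j}$ satisfy $a_{2,2} = a_{1,1} + a_{3,1}$ with nonnegative coefficients, so by the positivity of \cref{propPCanProps}(iii) one has $a_{2,2} \ne 0$ iff $a_{1,1} \ne 0$ or $a_{3,1} \ne 0$, and the normal-form analysis of \cref{propLCellPreorderRelsInRString} shows these patterns persist for the full preorder generated by them. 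The main point requiring care is the degeneracy for small $m$ flagged in the Remark after \cref{propLCellPreorderRelsInRString}: for $m = 4$ several of the cases \eqref{eqnNoRel}--\eqref{eqnZCase} coincide and several biconditionals become vacuous, so I would treat $m = 4$ and $m = 6$ as separate finite checks and confirm that in each collapsed list no stated biconditional is left unverified.
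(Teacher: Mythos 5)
Your proposal is correct and follows the same route as the paper, which states this corollary as a direct consequence of the normal forms established in the proof of \cref{propLCellPreorderRelsInRString}; your explicit tabulation of the forms $\id$, $\pi$, $b^k$, $b^l\circ\pi$, $a$ (using transpose-symmetry and the star-symmetry $(i,j)\mapsto(m-i,m-j)$ to cut down the check) is precisely the finite verification the paper leaves to the reader. One trivial slip: the positivity of the structure coefficients $\pre{p}{\mu}_{s,x_i}^{z_j}$ invoked for the disjunction equivalences is \cref{propPCanProps}(vi), not (iii), but this does not affect the argument.
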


We can now generalize \cite[Proposition 10.7]{LuCellsInAffWeylGrpsI} as follows:
\begin{prop}
   \label{propCellStar}
   Let $r, t \in S$ and $\Gamma$ be a union of left $p$-cells such that 
   $\Gamma \subseteq \mathcal{D}_R(r,t)$.
   Then the following holds:
   \begin{enumerate}
    \item $\widetilde{\Gamma} \defeq (\bigcup_{w \in \Gamma} \sigma_w) \setminus \Gamma$
          is a union of left $p$-cells.%, where $\sigma_w$ denotes the right 
          %$\langle r, t \rangle$-string through $w$.
    \item If $\Gamma$ is a left $p$-cell, then $\widetilde{\Gamma}$ is a union of at
          most $m - 2$ left $p$-cells.
    \item If $\Gamma$ is a left $p$-cell, then $\Gamma^{\ast} \defeq \{ w^{\ast} \; 
          \vert \; w \in \Gamma \}$ is a left $p$-cell as well.
   \end{enumerate}
\end{prop}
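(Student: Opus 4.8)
The plan is to derive all three parts from \cref{propLCellRStar} together with the classification of cross-string relations in \cref{propLCellPreorderRelsInRString}, with part~(iii) being essentially immediate and parts~(i) and~(ii) resting on a single transport statement.

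First I would dispose of~(iii). Since the right star-operation is an involution of $\mathcal{D}_R(r,t)$, we have $\Gamma^{\ast} \subseteq \mathcal{D}_R(r,t)$. By \cref{propLCellRStar}, $x \ceq{L} y \Leftrightarrow x^{\ast} \ceq{L} y^{\ast}$ for $x,y \in \mathcal{D}_R(r,t)$, so all of $\Gamma^{\ast}$ lies in one left $p$-cell. To see it is the \emph{whole} cell, suppose $z \ceq{L} x^{\ast}$ for some $x \in \Gamma$. The left analogue of \cref{lemCleDesc} (obtained by applying $\iota$ and \cref{corLeftRightEq}) shows that $\ceq{L}$-equivalent elements share a right descent set, so $\desc{R}(z) = \desc{R}(x^{\ast})$ and hence $z \in \mathcal{D}_R(r,t)$. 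Now \cref{propLCellRStar} gives $z^{\ast} \ceq{L} (x^{\ast})^{\ast} = x$, so $z^{\ast} \in \Gamma$ and $z = (z^{\ast})^{\ast} \in \Gamma^{\ast}$. Thus $\Gamma^{\ast}$ is exactly a left $p$-cell.

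The heart of~(i) and~(ii) is the following transport statement: if $\sigma, \sigma'$ are right $\langle r,t\rangle$-strings and some $a \in \sigma$ satisfies $a \ceq{L} a'$ for some $a' \in \sigma'$, then every $b \in \sigma$ is $\ceq{L}$-equivalent to some $b' \in \sigma'$. I would prove this by inspecting \cref{propLCellPreorderRelsInRString}: the hypothesis $a \ceq{L} a'$ excludes the no-relation case~\eqref{eqnNoRel} and forces the relation pattern between $\sigma$ and $\sigma'$ to be one in which the left-cell equivalence already matches up the two strings (the trivial, permuted and zig-zag patterns, together with their degenerations for small $m$), so that each element of $\sigma$ acquires an equivalent partner in $\sigma'$. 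Concretely this is the same bookkeeping as the normal-form computation ($\id,\pi,b^{k},b^{l}\circ\pi,a$) in the proof of that proposition, now read at the level of two-sided relations. This case check is the main obstacle; everything else is formal.

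Granting transport, part~(i) follows by showing $\widetilde{\Gamma}$ is closed under $\ceq{L}$. Let $z \in \widetilde{\Gamma}$ and $z' \ceq{L} z$. As above $z' \in \mathcal{D}_R(r,t)$, and $z' \notin \Gamma$ since $\Gamma$ is a union of cells; it remains to see that the string $\sigma_{z'}$ meets $\Gamma$. Pick $w \in \sigma_z \cap \Gamma$. Applying transport to $\sigma_z,\sigma_{z'}$ with $a=z,\,a'=z'$ yields $w' \in \sigma_{z'}$ with $w \ceq{L} w'$; since $w \in \Gamma$ and $\Gamma$ is a union of cells, $w' \in \Gamma$, so $\sigma_{z'} \cap \Gamma \ne \varnothing$ and $z' \in \widetilde{\Gamma}$. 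For part~(ii) (which builds on~(i)), assume $\Gamma$ is a single cell and fix one active string $\sigma^{0}$, i.e.\ $\sigma^{0} \cap \Gamma \ne \varnothing$. I claim every left $p$-cell contained in $\widetilde{\Gamma}$ meets $\sigma^{0}$: given $z \in \widetilde{\Gamma}$ with $w \in \sigma_z \cap \Gamma$ and any $x_0 \in \sigma^{0} \cap \Gamma$, we have $w \ceq{L} x_0$, and transport (with $a=w,\,a'=x_0,\,b=z$) produces $z^{0} \in \sigma^{0}$ with $z \ceq{L} z^{0}$; moreover $z^{0} \notin \Gamma$, so $z^{0} \in \widetilde{\Gamma} \cap \sigma^{0}$ lies in the same cell as $z$. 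Since distinct cells meet $\sigma^{0}$ in disjoint nonempty subsets and $\lvert \widetilde{\Gamma} \cap \sigma^{0}\rvert \leqslant m-2$ (the string $\sigma^{0}$ has $m-1$ elements, at least one in $\Gamma$), there are at most $m-2$ such cells.
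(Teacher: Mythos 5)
Your skeleton coincides with the paper's: part (iii) is read off from \cref{propLCellRStar} exactly as you do it, and parts (i)--(ii) are reduced to the same transport statement (any element of a string that contains an element $\ceq{L}$-equivalent to an element of a second string has itself an equivalent partner in that second string), after which your formal deductions --- closure of $\widetilde{\Gamma}$ under $\ceq{L}$ via right descent sets, ruling out $z' \in \Gamma$ because $\Gamma$ is a union of cells, and bounding the number of cells in (ii) by intersecting everything with one fixed string of $m-1$ elements at least one of which lies in $\Gamma$ --- are precisely the arguments in the paper.

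The gap is in your proof of transport, which is where all the actual content of the paper's proof sits. You want to deduce it from the \emph{statement} of \cref{propLCellPreorderRelsInRString}, asserting that an equivalence $a \ceq{L} a'$ forces the pattern between $\sigma$ and $\sigma'$ to be trivial, permuted or zig-zag, ``in which the left-cell equivalence already matches up the two strings''. That dichotomy is false: the classification only describes one-directional sets of relations $\{\, x_i \cle{L} y_j \,\}$, it says nothing about which of these relations are two-sided, and neighbour patterns do occur between strings containing equivalent elements (for instance, the relations from a string to itself inside a single cell can stabilize to the neighbour pattern on parity classes, cf.\ \cref{remNumPCells}); so the case check you defer is not only unperformed but starts from a wrong premise. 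A correct repair from the statement is possible but needs more than you say: one applies the classification separately to the two directions, notes that every non-empty pattern is total and, viewed as a relation, symmetric, and that the patterns are closed under composition (a fact living in the \emph{proof}, via the normal-form calculus $\id, \pi, b^k, b^l\circ\pi, a$); then for each $b \in \sigma$ composition and symmetry produce a cycle $b \cle{L} u \cle{L} b'' \cle{L} u' \cle{L} b$ with $u, u' \in \sigma'$, whence $b \ceq{L} u$. The paper avoids this issue altogether by never invoking the statement of \cref{propLCellPreorderRelsInRString}: it takes a loop $P$ of elementary relations realizing $x \ceq{L} y$ and returning to $x$, forms the induced map $f_P \colon \sigma_x \rightarrow \mathcal{P}(\sigma_x)$, and uses the stabilization $z \in f_P^{2n}(z)$ for $n \gg 1$ from the proof of that proposition; the chain witnessing this containment passes through $\sigma_y$ and consists of elements all equivalent to $z$, which is exactly transport. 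Either route closes the gap, but your argument as written does not.
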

\begin{proof}
   For two left $p$-cells $\Gamma_1$ and $\Gamma_2$ contained in $\mathcal{D}_R(r,t)$ 
   we have:
   \begin{align*}
      \widetilde{(\Gamma_1 \cup \Gamma_2)} &= \left( \bigcup_{w \in \Gamma_1 \cup \Gamma_2} 
         \sigma_w \right) \setminus (\Gamma_1 \cup \Gamma_2) \\
         &= \widetilde{\Gamma_1} \setminus \Gamma_2 \cup 
            \widetilde{\Gamma_2} \setminus \Gamma_1
   \end{align*}
   Therefore, we may assume without loss of generality that $\Gamma$ is a left $p$-cell. 
   It is enough to prove that $\bigcup_{w \in \Gamma} \sigma_w$ is a union of 
   left $p$-cells.
   
   Let $x \in \bigcup_{w \in \Gamma} \sigma_w$ and $y \in W$ such that $x \ceq{L} y$.
   Then there exists a sequence
   \[ P: x = x_0 \multGe{L} x_1 \multGe{L} \dots \multGe{L} x_k = y \multGe{L} x_{k+1} \multGe{L} 
   \dots \multGe{L} x_l = x \]
   of elements in $\mathcal{D}_R(r, t)$ (by \cref{lemCleDesc}) that 
   all lie in the same left $p$-cell as $x$ and $y$. Consider the 
   right $\langle r, t \rangle$-strings of 
   all the elements in the sequence and note that $\sigma_x$ contains an 
   element $\bar{x} \in \Gamma$. Since each $\langle r, t \rangle$-string
   is contained in a right $p$-cell (see \cref{corRStringsInRCells}), 
   the elements have the same left descent set.
   Thus for $0 \leqslant i < l$ all the elements $z \in \sigma_{x_i}$
   satisfy $s_i z > z$ where $s_i$ is the simple reflection used to
   get from $x_i$ to $x_{i+1}$. Express $\kl{s_i} \pcan{z}$ in the 
   $p$-canonical basis and remember the elements in $\sigma_{x_{i+1}}$
   indexing $p$-canonical basis elements with non-trivial coefficients.
   This gives the elementary left $p$-cell relations between $\sigma_{x_i}$
   and $\sigma_{x_{i+1}}$ induced by $s_i$. By combining the various
   elementary left $p$-cell relations between $\sigma_{x_i}$
   and $\sigma_{x_{i+1}}$ induced by $s_i$ along the sequence $P$,
   we obtain for any $z \in \sigma_x$ a possibly empty set $f_P(z)$ 
   of elements in $\sigma_x$ such that $z \cge{L} z'$ if and only 
   if $z' \in f_P(z)$. The proof of 
   \cref{propLCellPreorderRelsInRString} shows that for 
   $n \gg 1$ the image of $f_P^{2n}$ stabilizes in the sense that
   $z \in f_P^{2n}(z)$. The element $\bar{x}$ shows that there is an element in 
   $\sigma_y \cap \Gamma$ which implies $y \in \bigcup_{w \in \Gamma} \sigma_w$ 
   and finishes the proof of (i).
   
   %By looking at the relations stated in \cref{propStructCoeffRels}, one 
   %easily sees that in the notation of the Proposition any non-zero 
   %$\pre{p}{\mu}_{s, x_a}^{z_b}$ for fixed $1 \leqslant a, b \leqslant m-1$ 
   %implies at least $m - 2$ other non-zero structure coefficients 
   %with at least one involving any other element in the two 
   %$\langle r, t \rangle$-strings under consideration.
   We claim that any left $p$-cell $\Gamma'$ in $\bigcup_{w \in \Gamma} \sigma_w$
   intersects $\sigma_x$ non-trivially. Let $y \in \Gamma'$ and 
   $\bar{y} \in \sigma_y \cap \Gamma$. Use a sequence $P$ as above relating
   $\bar{x}$ and $\bar{y}$ to construct 
   $f_P: \sigma_x \rightarrow \mathcal{P}(\sigma_x)$. \cref{propLCellPreorderRelsInRString}
   shows that there exist $x' \in \sigma_x$ such that $x' \cge{L} y$. 
   Using the stabilization of $f_P$ as above, we get $x' \in f_P^{2n}(x')$ 
   for $n \gg 1$ and in particular $x'$ lies in $\Gamma' \cap \sigma_x$.
   This finishes the proof of the claim and shows that $\widetilde{\Gamma}$ is a
   union of at most $m - 2$ left $p$-cells giving (ii).
   
   (iii) is an immediate consequence of \cref{propLCellRStar}.
\end{proof}

\begin{remark}
   \label{remNumPCells}
   Analyzing the situation more carefully allows us to say more
   about the number of left $p$-cells in $\widetilde{\Gamma}$. Let 
   $x, y \in \Gamma$.  Consider $f_P^n$ constructed as in the last proof
   for a sequence $P$ relating $x$ and $y$.    
   The proof of \cref{propLCellPreorderRelsInRString} shows that 
   $f_P^{2n}$ for $n \gg 1$ stabilizes to one of the following maps:
   \begin{alignat*}{5}
      &f_{\text{triv}}: \sigma_x &&\longrightarrow \mathcal{P}(\sigma_x) \quad \text{or} \quad 
         &&f_{\text{nontriv}}: \sigma_x &&\longrightarrow \mathcal{P}(\sigma_x) \\
         &x_i &&\longmapsto \{x_i\} 
         %\forall 1 \leqslant l \leqslant \floor*{\frac{m}{2}}: 
         &&x_{2l-1}&&\longmapsto \{x_{2k-1} \; \vert \; 
         1 \leqslant k \leqslant \floor*{\frac{m}{2}} \}\\
      &&&%\forall 1 \leqslant l \leqslant \floor*{\frac{m-2}{2}}: 
         &&x_{2l} &&\longmapsto \{x_{2k} \; \vert \; 
         1 \leqslant k \leqslant \floor*{\frac{m-2}{2}}\}
   \end{alignat*}
   Finally, let $P$ vary over all potential sequences and $y$ vary over 
   the elements in $\Gamma$. If there exists an element $y \in \Gamma$
   and a sequence $P$ relating $x$ and $y$ such that $f_P^n$ stabilizes 
   to $f_{\text{nontriv}}$, then $\widetilde{\Gamma}$ is a left $p$-cell
   (note that $\widetilde{\Gamma}$ is always non-empty as all elements in
   $\Gamma$ have the same right descent set and this is not the case for all
   elements in $\sigma_x$). \todo[inline]{Find necessary and sufficient conditions for
   this to hold! Possible: $\Gamma$ contains elements whose positions in their strings
   do not sum up to $m$? $\Gamma$ contains more than one odd-numbered or more than
   one even-numbered element in a string?}
   
   Another situation, in which we can say more about the number of $p$-cells
   in $\widetilde{\Gamma}$ is the following:
   If $m \geqslant 4$ and there exists a string $\sigma$ such that 
   $\sigma \cap \Gamma$ contains only one odd-numbered element of $\sigma$, 
   then $\widetilde{\Gamma}$ decomposes into at least $2$ left $p$-cells 
   for descent set reasons. Therefore, under these assumptions $\widetilde{\Gamma}$
   contains precisely two right $p$-cells if $m = 4$.
\end{remark}

The definition of a $W$-graph from \cite[\S1]{KL} describes a based representation
of the Hecke algebra. A typical example is given by a Kazhdan-Lusztig cell
module equipped with the Kazhdan-Lusztig basis, for which the $W$-graph describes
the action of the generating set $\{ \std{s} \; \vert \; s  \in S \}$ of the 
Hecke algebra. In order to allow $W$-graphs to also describe $p$-cell modules, 
we need to slightly generalize the original definition, in which we use the generating
set $\{ \kl{s} \; \vert \; s \in S\}$ of the Hecke algebra instead:
\begin{defn}
   \label{defpWGraph}
   A coloured $W$-graph is a directed graph with vertices $\mathcal{V}$ and edges $\mathcal{E}$
   together with the following decorations:
   \begin{itemize}
      \item for each vertex $x \in \mathcal{V}$ a subset $I_x$ of $S$ \text{,}
      \item for each edge $(x, y) \in \mathcal{E}$ a family of Laurent polynomials
            \[\{ \mu_{s, x}^y \; \vert \; s \in I_y \setminus I_x \} \subset \Z[v, v^{-1}]\]
   \end{itemize}
   subject to the conditions below. Let $V$ be the free $\Z[v, v^{-1}]$-module
   with basis $\mathcal{V}$. For $s \in S$ define a $\Z[v, v^{-1}]$-linear endomorphism
   of $V$ as follows
   \[ \tau_s(x) = 
      \begin{cases}
         (v + v^{-1}) x & \text{if } s \in I_x \text{,} \\
         \sum_{\substack{(x, y) \in \mathcal{E} \\ s \in I_y}} \mu_{s, x}^y y 
         & \text{otherwise.}
      \end{cases}
   \]
   where the sum is finite due to the second condition below. Then a coloured $W$-graph
   is required to satisfy:
   \begin{enumerate}
      \item the morphism $\heck \rightarrow \End(V)$, $\kl{s} \mapsto \tau_s$
            extends to a morphism of $\Z[v, v^{-1}]$-algebras,
      \item for each pair $(x, s) \in \mathcal{V} \times S$ there are only finitely
            many edges $(x, y) \in \mathcal{E}$ with $\mu_{s, x}^y \ne 0$.
   \end{enumerate}
\end{defn}

It is immediate that we can associate to each left $p$-cell $C$ a coloured $W$-graph $\Gamma_C$ 
as defined above with $C$ as vertex set. For each $x \in C$ we set $I_x \defeq \desc{L}(x)$
and use edges to encode the structure coefficients for the $p$-canonical
basis $\pre{p}{\mu}_{s, x}^y$ for $x, y \in C$. Observe that right star operations
do not modify the left descent set. This combined with \cref{propCellStar} (iii) and 
\cref{corStructCoeffsStar} implies the following result:

\begin{lem}
   \label{lemCellModStar}
   Let $C$ be a left $p$-cell contained in $\mathcal{D}_R(r, t)$. Then
   the left $p$-cell module associated to $C$ is isomorphic to the left $p$-cell
   module associated to $C^{\ast}$ where $\ast$ is the right star-operation associated
   to $r$ and $t$. In particular, the coloured $W$-graphs $\Gamma_C$ and $\Gamma_{C^{\ast}}$
   are isomorphic as decorated, directed graphs.
\end{lem}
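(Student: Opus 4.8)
The plan is to exhibit the bijection $w \mapsto w^{\ast}$ as the desired isomorphism of coloured $W$-graphs, from which the isomorphism of $p$-cell modules follows immediately. First I would record that this map is well-defined and bijective between the two vertex sets: by \cref{propCellStar}~(iii) the set $C^{\ast}$ is again a left $p$-cell, and since the right star-operation is an involution on $\mathcal{D}_R(r,t)$ (and $C \subseteq \mathcal{D}_R(r,t)$ forces $C^{\ast} \subseteq \mathcal{D}_R(r,t)$), the assignment $C \to C^{\ast}$, $w \mapsto w^{\ast}$, is an involutive bijection.

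Next I would check that $w \mapsto w^{\ast}$ preserves the vertex decorations $I_x = \desc{L}(x)$ from \cref{defpWGraph}. For $x \in C$, both $x$ and $x^{\ast}$ lie in the same right $\langle r, t \rangle$-string $\sigma_x$; by \cref{corRStringsInRCells} this string is contained in a single right $p$-cell, so \cref{lemCleDesc} yields $\desc{L}(x) = \desc{L}(x^{\ast})$. Hence $I_x = I_{x^{\ast}}$, which is precisely the statement that the right star-operation does not alter left descent sets.

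It then remains to match the edge decorations and check that the induced linear map intertwines the operators $\tau_s$. For $s \in S$ with $s \notin I_x$ --- equivalently $sx > x$ --- \cref{corStructCoeffsStar} gives $\pre{p}{\mu}_{s, x}^{y} = \pre{p}{\mu}_{s, x^{\ast}}^{y^{\ast}}$ for all $y \in C$, so the families of Laurent polynomials labelling the outgoing $s$-edges at $x$ and at $x^{\ast}$ agree under $y \leftrightarrow y^{\ast}$. Combining this with the descent-set invariance of the previous step, I would verify the two cases of the definition of $\tau_s$: when $s \in I_x = I_{x^{\ast}}$ both $\tau_s(x)$ and $\tau_s(x^{\ast})$ are multiplication by $(v + v^{-1})$, while when $s \notin I_x$ the sum $\sum_{y} \pre{p}{\mu}_{s, x}^{y}\, y$ is carried to $\sum_{y} \pre{p}{\mu}_{s, x^{\ast}}^{y^{\ast}}\, y^{\ast} = \tau_s(x^{\ast})$, using that $s \in I_y \Leftrightarrow s \in I_{y^{\ast}}$. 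This shows the $\Z[v,v^{-1}]$-linear map sending $x$ to $x^{\ast}$ is a morphism of $\heck$-modules, hence an isomorphism of the two $p$-cell modules, and simultaneously an isomorphism $\Gamma_C \xrightarrow{\sim} \Gamma_{C^{\ast}}$ of decorated directed graphs.

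Given how much is already packaged into \cref{corStructCoeffsStar} and \cref{propCellStar}, there is no serious obstacle. The only points demanding care are confirming that the hypothesis $sx > x$ of \cref{corStructCoeffsStar} is exactly the regime in which the $W$-graph records $s$-edges out of $x$, and the descent-set invariance ensuring that the vertex labels and the summation ranges in the two $\tau_s$-sums correspond correctly under $w \mapsto w^{\ast}$.
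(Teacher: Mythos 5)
Your proof is correct and follows exactly the route the paper intends: the paper's own (one-sentence) justification cites precisely the three ingredients you use --- invariance of left descent sets under right star operations (which, as you note, follows from \cref{corRStringsInRCells} and \cref{lemCleDesc}), \cref{propCellStar}~(iii) for $C^{\ast}$ being a left $p$-cell, and \cref{corStructCoeffsStar} for matching the structure coefficients. You have merely written out the verification that $w \mapsto w^{\ast}$ intertwines the $\tau_s$, which the paper leaves implicit.
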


\subsection{Vogan's Generalized \texorpdfstring{$\tau$}{Tau}-Invariant}

Vogan defined in \cite[Definition 3.10]{VoGenTauInv} an invariant of Kazhdan-Lusztig 
left cells in the setting of primitive ideals for semi-simple Lie algebras. This became
known  as Vogan's generalized $\tau$-invariant and was generalized in 
\cite[Definition 5.1]{BGTauInv} to arbitrary Coxeter groups. (Observe that only pairs
of simple reflections $\{r, t\} \in W$ with $m_{r,t} \in \{3, 4\}$ matter 
in the following definition.)
\todo[inline]{Generalize to $m_{r,t} = 6$?! No real advantage! Only for $\widetilde{G}_2$.}
\begin{defn}
   \label{defGenTauInv1}
   Since $\mathfrak{T}_{r,t}(x)$ for $r, t \in S$ consists of one or two elements,
   we use the following convention: We consider $\mathfrak{T}_{r,t}(x)$ as a multiset 
   with two identical elements in the following if $\{xr, xt\} \cap \mathcal{D}_R(r,t)$ 
   is of cardinality $1$.
   
   We define a sequence of equivalence relations $\approx_n$ for $n \in \N$ as follows.
   For $x, y \in W$ we write:
   \begin{alignat*}{3}
      x \approx_0 y &\text{ if } &&\desc{R}(x) = \desc{R}(y)\text{,} \\
      x \approx_{n+1} y &\text{ if } &&x \approx_n y  \text{ and for any pair } 
         r, t \in S \text{ such that } m_{r,t} \in \{3, 4\} \text{ and } \\
     &&&x, y \in \mathcal{D}_R(r,t) \text{ with } \mathfrak{T}_{r,t}(x) = \{x_1, x_2\} 
         \text{ and } \mathfrak{T}_{r,t}(y) = \{y_1, y_2\}\\
     &&&\text{we have: } x_1 \approx_n y_1 \text{, } x_2 \approx_n y_2 \text{ or } x_1 \approx_n y_2\text{, }
     x_2 \approx_n y_1
     %x^{\ast} \approx_n y^{\ast}
   \end{alignat*}
   We say that $x$ and $y$ have the same \emph{generalized $\tau$-invariant} if
   $x \approx_n y$ holds for all $n \geqslant 0$. We call the set 
   \[\{ w \in W \; \vert \; x \approx_n w \text{ for all } n \geqslant 0 \}\]
   the $\tau$-equivalence class of $x$.
\end{defn}

\begin{remark}
   Observe that the last definition admits an obvious generalization allowing the
   case $m_{r, t} = 6$. For our current applications, we do not need this
   generality. Thus we exclude this case just like Vogan did in the original
   definition.
\end{remark}

The following result shows that left $p$-cells give a refinement of the $\tau$-equivalence classes.
It is a generalization of \cite[Theorem 5.2]{BGTauInv}:
\begin{thm}
   Assume $p > 2$ if there exist $s \ne t \in S$ with $m_{s, t} = 4$. Let $\Gamma$ be a left $p$-cell. 
   Then all elements in $\Gamma$ have the same generalized $\tau$-invariant.
   In particular, any $\tau$-equivalence class decomposes into left $p$-cells.
\end{thm}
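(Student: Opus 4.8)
The plan is to prove the equivalent (and more convenient) statement that left $p$-cells refine each equivalence relation $\approx_n$, i.e.\ that $x \ceq{L} y$ implies $x \approx_n y$ for every $n \geqslant 0$, and to argue by induction on $n$. The final assertion that every $\tau$-equivalence class is a union of left $p$-cells is then immediate, since two elements of a common left $p$-cell would be $\approx_n$-equivalent for all $n$ and hence share their generalized $\tau$-invariant. Throughout I would use that $x \ceq{L} y$ forces $\desc{R}(x) = \desc{R}(y)$: this is the left-cell version of \cref{lemCleDesc}, obtained by passing to inverses via \cref{corLeftRightEq}. In particular $x \in \mathcal{D}_R(r,t) \Leftrightarrow y \in \mathcal{D}_R(r,t)$ for every pair $r,t$, so the hypotheses of the $\approx_{n+1}$-condition hold for $x$ exactly when they hold for $y$, and no vacuous or asymmetric situation can arise.

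For the base case $n = 0$ the relation $\approx_0$ is equality of right descent sets, so it is exactly the fact just recalled. For the inductive step I would assume that $x \ceq{L} y$ implies $x \approx_n y$ for all elements, fix $x \ceq{L} y$ and a pair $r,t$ with $m_{r,t} \in \{3,4\}$ and $x,y \in \mathcal{D}_R(r,t)$, and reduce everything to the following matching statement: there is a pairing of the multisets $\mathfrak{T}_{r,t}(x)$ and $\mathfrak{T}_{r,t}(y)$ whose matched pairs lie in a common left $p$-cell. Indeed, the inductive hypothesis then upgrades each matched pair to an $\approx_n$-relation, which is precisely what the definition of $\approx_{n+1}$ demands.

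For $m = 3$ the matching is immediate: here $\mathfrak{T}_{r,t}(x) = \{x^{\ast}, x^{\ast}\}$ and $\mathfrak{T}_{r,t}(y) = \{y^{\ast}, y^{\ast}\}$, and since the right star-operation preserves left $p$-cells (\cref{propLCellRStar}) we have $x^{\ast} \ceq{L} y^{\ast}$, so we match $x^{\ast}$ with $y^{\ast}$. The case $m = 4$ is the main obstacle, precisely because the star-operation fixes the middle element of a three-element string $\sigma_x = \{x_1 < x_2 < x_3\}$ and therefore gives no information there. I would run through the finitely many positions of $x$ and $y$ in $\sigma_x$ and $\sigma_y = \{y_1 < y_2 < y_3\}$, feeding $x \ceq{L} y$ into the equivalences derived from \cref{propLCellPreorderRelsInRString} (namely $x_1 \cle{L} y_2 \Leftrightarrow x_2 \cle{L} y_1$ and $x_2 \cle{L} y_2 \Leftrightarrow (x_1 \cle{L} y_1 \vee x_1 \cle{L} y_3)$, and the same with the two strings interchanged) together with the star-equivalence $x_i \cle{L} y_j \Leftrightarrow x_i^{\ast} \cle{L} y_j^{\ast}$. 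If both $x$ and $y$ are middle elements, one chases the resulting preorder relations around $\sigma_x$ and $\sigma_y$ and finds that $\{x_1, x_3\}$ and $\{y_1, y_3\}$ fall into matching left $p$-cells, the four elements often collapsing into a single left cell. If $x$ is outer (reducing to $x = x_1$ by the star-symmetry $x_3 = x_1^{\ast}$) and $y$ is middle, the equivalences force $x_2 \ceq{L} y_1$ and hence, by the star-operation, $x_2 \ceq{L} y_3$, matching $\{x_2, x_2\}$ with $\{y_1, y_3\}$; the remaining outer--outer cases are handled the same way, yielding $x_2 \ceq{L} y_2$.

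It is exactly at $m = 4$ that the hypothesis $p > 2$ is used: it is the standing assumption of \cref{secStrCells} under which \cref{propLCellRStar} and \cref{propLCellPreorderRelsInRString} are available. The case $m = 6$ never enters, because the generalized $\tau$-invariant only involves pairs with $m_{r,t} \in \{3,4\}$. The only genuinely delicate point in the whole argument is the bookkeeping of the $m=4$ case analysis, where the middle element is invisible to the star-operation and one must invoke the finer classification of inter-string relations rather than the star-operation alone.
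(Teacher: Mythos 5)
Your proof is correct --- I checked the $m=4$ case analysis in detail --- but it takes a genuinely different route from the paper's. The paper gives no self-contained induction: it proves the theorem by reference to \cite[Theorem 5.2]{BGTauInv}, asserting that Bonnaf\'e--Geck's proof goes through once every Kazhdan--Lusztig construction is replaced by its $p$-canonical analogue, and recording two patches: a new ``zeroth case'' in which the left $p$-cell $\Gamma$ meets a right $\langle s,t\rangle$-string in its two outer elements $\{x's, x'sts\}$ (settled by showing that $\widetilde{\Gamma}$ is then a single left $p$-cell), and, in Bonnaf\'e--Geck's second case, a substitution of swapped left $p$-cell preorder relations for the appeal to \cite[Corollary 6.3]{LuCellsInAffWeylGrpsI}, which is unavailable $p$-canonically. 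Your argument has the skeleton underlying that reference (induction on $n$, with the inductive step reduced to matching the multisets $\mathfrak{T}_{r,t}(x)$ and $\mathfrak{T}_{r,t}(y)$ by pairs lying in common left $p$-cells), but it is self-contained and organized element-wise rather than cell-wise: $m=3$ is handled by \cref{propLCellRStar} alone, and $m=4$ runs entirely through the two equivalences in the corollary to \cref{propLCellPreorderRelsInRString} combined with the star-equivalence. All three of your $m=4$ configurations close up as claimed: in the middle--middle case each of the four combinations of the disjunction $x_1 \cle{L} y_1 \vee x_1 \cle{L} y_3$ with its mirror yields the straight matching, the crossed matching, or all four outer elements in one left $p$-cell; in the outer--middle case $x_2 \ceq{L} y_1$ is forced and then $x_2 \ceq{L} y_3$ follows since $x_2^{\ast} = x_2$ and $y_1^{\ast} = y_3$; in the outer--outer case the disjunction equivalence gives $x_2 \ceq{L} y_2$ directly. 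Your element-wise matching also absorbs the paper's zeroth case for free, since the two outer elements of a single string have identical neighbour multisets, and your use of the equivalence corollary plays exactly the role of the paper's substitute for Lusztig's result. What your version buys is a complete argument independent of \cite{BGTauInv}; what the paper's buys is brevity, at the cost of asking the reader to re-run an external proof with the prescribed substitutions.
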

\begin{proof}
   The proof of \cite[Theorem 5.2]{BGTauInv} works after replacing all
   Kazhdan-Lusztig related constructions by their $p$-canonical analogues 
   with the following modifications:
   First add a zeroth case in which there exists a right $\langle s, t\rangle$-string $\sigma$
   such that $\Gamma \cap \sigma = \{x's, x'sts\}$ for some minimal element $x'$ in its right
   $\langle s, t\rangle$-coset. In this case, $\widetilde{\Gamma}$ consists of a single 
   left $p$-cell, which allows us to conclude.
   Finally in the second case, use the left $p$-cell preorder relations for the roles 
   of $y$ and $w$ swapped instead of appealing to \cite[Corollary 6.3]{LuCellsInAffWeylGrpsI}. 
\end{proof}

\begin{cor}
   Assume $p > 2$ if there exist $s \ne t \in S$ with $m_{s, t} = 4$. Assume that two elements in $W$
   have the same generalized $\tau$-invariant if and only if they belong
   to the same Kazhdan-Lusztig left cell (i.e. the generalized $\tau$-invariant
   gives a complete invariant of Kazhdan-Lusztig left cells).
   Then Kazhdan-Lusztig left cells decompose into left $p$-cells.
\end{cor}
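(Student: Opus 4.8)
The plan is to derive this purely formally from the preceding theorem together with the completeness hypothesis, so there is essentially no new computation to do. First I would record the two basic facts that make the statement meaningful: the relation $\ceq{L}$ (i.e.\ $x \cle{L} y$ and $y \cle{L} x$) is an equivalence relation, so the left $p$-cells partition $W$, and likewise the Kazhdan-Lusztig left cells (the equivalence classes for $\cle[0]{L}$) partition $W$. With both sides being partitions, the assertion ``Kazhdan-Lusztig left cells decompose into left $p$-cells'' is exactly the statement that the left $p$-cell partition refines the Kazhdan-Lusztig left cell partition; equivalently, that every left $p$-cell is contained in a single Kazhdan-Lusztig left cell. This reformulation is what I would actually prove.

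Next I would fix an arbitrary left $p$-cell $\Gamma$ and two elements $x, y \in \Gamma$. Since the hypothesis on $p$ (namely $p > 2$ whenever some $m_{s,t} = 4$) is precisely the hypothesis of the preceding theorem, that theorem applies and tells us that all elements of a left $p$-cell share the same generalized $\tau$-invariant; in particular $x$ and $y$ have the same $\tau$-invariant. Now I would invoke the standing assumption of the corollary, that the generalized $\tau$-invariant is a \emph{complete} invariant of Kazhdan-Lusztig left cells: having the same $\tau$-invariant is therefore equivalent to lying in the same Kazhdan-Lusztig left cell. Hence $x$ and $y$ lie in the same Kazhdan-Lusztig left cell. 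As $x, y \in \Gamma$ were arbitrary, the entire left $p$-cell $\Gamma$ is contained in one Kazhdan-Lusztig left cell.

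Finally, since $\Gamma$ was an arbitrary left $p$-cell, every left $p$-cell sits inside a single Kazhdan-Lusztig left cell, and therefore each Kazhdan-Lusztig left cell is the disjoint union of the left $p$-cells it contains, which is the desired decomposition. I do not expect any genuine obstacle: all the substance has already been absorbed into the theorem asserting that left $p$-cells refine the $\tau$-equivalence classes, and the only thing the corollary adds is the formal step of identifying $\tau$-equivalence classes with Kazhdan-Lusztig left cells via the assumed completeness. The one point worth stating carefully is the direction of refinement, so that ``decompose into'' is read correctly as ``$p$-cells are the finer partition''; this is handled by the reformulation in the first step.
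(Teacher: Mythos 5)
Your proposal is correct and is exactly the argument the paper intends: the preceding theorem shows left $p$-cells refine $\tau$-equivalence classes, and the completeness hypothesis identifies $\tau$-equivalence classes with Kazhdan-Lusztig left cells, so the $p$-cell partition refines the Kazhdan-Lusztig left cell partition. The paper leaves this corollary without a written proof precisely because the deduction is this formal, and your careful reformulation of ``decompose into'' as the refinement of partitions is the right way to make it rigorous.
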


In \cite[Theorem 6.5]{VoGenTauInv}, Vogan shows that the generalized $\tau$-invariant
gives a complete invariant in finite type $A$. The same holds in finite types $B/C$
(see \cite[Theorem 3.5.9]{GaPrimIdealsIII} based on \cite[Definitions 2.1.3. - 2.1.7., 3.2.1. and 3.4.1.]
{GaPrimIdealsII, GaPrimIdealsIII}) and finite type $E_6$ (see \cite{ToLCellsE6}). 
Therefore, we have:

\begin{cor}
   \label{corLCellsDecomp}
   The Kazhdan-Lusztig left cells in finite type $A$ decompose into left $p$-cells.
   The same holds in finite types $B$ and $C$ for $p > 2$ and in finite type $E_6$.
\end{cor}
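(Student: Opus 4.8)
The plan is to read off the statement from the corollary immediately preceding it, whose hypothesis is that the generalized $\tau$-invariant be a complete invariant of Kazhdan-Lusztig left cells (together with the standing assumption $p > 2$ whenever some $m_{s,t} = 4$). That corollary is in turn just the combination of two facts: left $p$-cells refine $\tau$-equivalence classes (the preceding theorem), and under completeness each $\tau$-equivalence class is a single Kazhdan-Lusztig left cell; hence every Kazhdan-Lusztig left cell, being a $\tau$-class, is partitioned into left $p$-cells. So the whole task reduces to verifying, type by type, that the two hypotheses of the preceding corollary are met.

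First I would check the condition on $p$. In finite type $A_n$ and in the simply-laced type $E_6$ every $m_{s,t}$ with $s \neq t$ lies in $\{2,3\}$, so no pair has $m_{s,t} = 4$ and no restriction on $p$ is imposed; this is why the statement asserts decomposition for all $p$ in these types. In finite types $B_n$ and $C_n$ there is exactly one pair of simple reflections with $m_{s,t} = 4$, so the hypothesis specializes to precisely $p > 2$, matching the claim.

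Next I would invoke the completeness of the generalized $\tau$-invariant in each case: Vogan's \cite[Theorem 6.5]{VoGenTauInv} in type $A$, Garfinkle's \cite[Theorem 3.5.9]{GaPrimIdealsIII} (built on the domino-tableau machinery of \cite{GaPrimIdealsII, GaPrimIdealsIII}) in types $B$ and $C$, and \cite{ToLCellsE6} in type $E_6$. In each of these three settings two elements of $W$ have the same generalized $\tau$-invariant precisely when they lie in the same Kazhdan-Lusztig left cell. Feeding this into the preceding corollary yields the asserted decomposition in each type, which completes the argument.

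The genuine mathematical content lies upstream rather than here: the substantive input is the preceding theorem that left $p$-cells refine $\tau$-classes, which itself rests on \cref{propCellStar}(iii) and \cref{corStructCoeffsStar}. Relative to those results this corollary is essentially bookkeeping, so the only points needing care are the correct translation of the $p$-hypothesis in each type and the check that the cited completeness statements genuinely apply to the finite, crystallographic Coxeter systems under consideration.
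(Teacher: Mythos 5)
Your proposal is correct and follows exactly the paper's own route: the paper derives \cref{corLCellsDecomp} by feeding the completeness results of Vogan (type $A$), Garfinkle (types $B/C$), and \cite{ToLCellsE6} (type $E_6$) into the immediately preceding corollary, with the $p > 2$ condition arising only in types $B/C$ because those are the only listed types with $m_{s,t} = 4$. Your type-by-type verification of the hypotheses matches the paper's (largely implicit) argument, so there is nothing to correct.
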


Even though we currently can only calculate the full $p$-canonical basis in types
$B_n$ and $C_n$ for $n \leqslant 5$ and in these groups only $2$-torsion occurs, the last
result is in particular of interest due to \cite{WTorsion, WGeomTorsion}. In these papers
Williamson shows that there is torsion in the local integral intersection
cohomology of Schubert varieties in the flag variety of the general linear group
that grows exponentially in the rank. This implies that the $p$-canonical basis
does not coincide with the Kazhdan-Lusztig basis for arbitrarily large primes
in type $A_n$ (and that the primes for which this occurs grow exponentially in
the rank). Since type $A_n$ embeds into type $B_{n+1}$ and $C_{n+1}$ this gives 
many interesting examples for which the $p$-canonical basis and the Kazhdan-Lusztig 
basis differ for large primes $p$.

In \cite[Remark following Proposition 4.4]{VoGenTauInv} Vogan mentions that the generalized
$\tau$-invariant is not a complete invariant in type $F_4$. Similarly in type $D_n$ for 
$n \geqslant 6$ (see the introduction of \cite{GaPrimIdealsI}).

As proposed by \cite[\S4]{VoOrdPrimSpec} and \cite[Remark 5.3]{BGTauInv} we could 
have defined an invariant as follows:
\begin{defn}
   \label{defGenTauInv2}
   
   As in \cref{defGenTauInv2} we define a sequence of equivalence relations 
   $\approx_n'$ for $n \in \N$ as follows. For $x, y \in W$ we write:
   \begin{alignat*}{3}
      x \approx_0' y &\text{ if } &&\desc{R}(x) = \desc{R}(y)\text{,} \\
      x \approx_{n+1}' y &\text{ if } &&x \approx_n' y  \text{ and for any pair } 
         r, t \in S \text{ such that } \infty > m_{r,t} \geqslant 3 \text{ and}\\
     &&&x, y \in \mathcal{D}_R(r,t) \text{ we have: } x^{\ast} \approx_n' y^{\ast}
   \end{alignat*}
   where $(-)^{\ast}$ is the right star-operation with respect to $\{r, t\}$.
   As above we say that $x$ and $y$ have the same \emph{generalized $\widetilde{\tau}$-invariant} if
   $x \approx_n' y$ holds for all $n \geqslant 0$. We call the set 
   \[\{ w \in W \; \vert \; x \approx_n' w \text{ for all } n \geqslant 0 \}\]
   the $\widetilde{\tau}$-equivalence class of $x$.
\end{defn}

In this case, \cref{propLCellRStar} immediately implies that the partition of $W$ 
into left $p$-cells gives a refinement of the $\widetilde{\tau}$-equivalence classes:

\begin{cor}
   Assume $p > 2$ (resp. $p > 3$) if there exist $s \ne t \in S$ with $m_{s, t} = 4$
   (resp. $m_{s, t} = 6$). Let $\Gamma$ be a left $p$-cell. 
   Then all elements in $\Gamma$ have the same generalized $\widetilde{\tau}$-invariant.
   In particular, any $\widetilde{\tau}$-equivalence class decomposes into left $p$-cells.
\end{cor}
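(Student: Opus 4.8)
The plan is to prove, by induction on $n$, the statement $P(n)$: \emph{any two elements $x, y \in W$ lying in the same left $p$-cell satisfy $x \approx_n' y$}. Taking the conjunction over all $n$ then shows that a left $p$-cell is contained in a single $\widetilde{\tau}$-equivalence class, which is exactly the first claim. The ``in particular'' assertion is then purely formal: since having the same generalized $\widetilde{\tau}$-invariant is an equivalence relation on $W$ and we have shown that every left $p$-cell is contained in one of its classes, each $\widetilde{\tau}$-equivalence class is automatically a union of left $p$-cells.

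For the base case $n = 0$, I would invoke the left analogue of \cref{lemCleDesc}. Applying the anti-involution of \cref{corLeftRightEq}, the relation $x \ceq{L} y$ is equivalent to $x^{-1} \ceq{R} y^{-1}$, and \cref{lemCleDesc} then gives $\desc{L}(x^{-1}) = \desc{L}(y^{-1})$, that is $\desc{R}(x) = \desc{R}(y)$. This is precisely the condition $x \approx_0' y$.

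For the inductive step I would assume $P(n)$ and take $x, y$ in a common left $p$-cell. The relation $x \approx_n' y$ holds immediately by $P(n)$. To verify $x \approx_{n+1}' y$, fix any pair $r, t \in S$ with $\infty > m_{r,t} \geqslant 3$ and $x, y \in \mathcal{D}_R(r,t)$; under the standing hypotheses on $p$ (namely $p > 2$ when $m_{r,t} = 4$ and $p > 3$ when $m_{r,t} = 6$, the case $m_{r,t} = 3$ being unconditional) the right star-operation $(-)^{\ast}$ with respect to $\{r,t\}$ is well-behaved, so \cref{propLCellRStar} applies and shows that $x^{\ast}$ and $y^{\ast}$ again lie in the same left $p$-cell. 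Applying $P(n)$ to the pair $x^{\ast}, y^{\ast}$ then yields $x^{\ast} \approx_n' y^{\ast}$. Since $\{r,t\}$ was arbitrary, $x \approx_{n+1}' y$ follows directly from the definition of $\approx_{n+1}'$.

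The argument is almost entirely formal once \cref{propLCellRStar} is in hand; the genuine content has already been extracted there (and ultimately in \cref{corStructCoeffsStar} and \cref{propStructCoeffRels}). The only point I expect to require care is matching the $p$-hypotheses of the statement with those under which \cref{propLCellRStar} is valid: one must check that every pair $\{r,t\}$ entering the recursive clause of $\approx_{n+1}'$ — i.e. with $m_{r,t} \in \{3,4,6\}$ — meets the bound guaranteeing that $(-)^{\ast}$ preserves left $p$-cells. I anticipate no further obstacle, since pairs with $m_{r,t} \in \{2, \infty\}$ never contribute to the definition of the generalized $\widetilde{\tau}$-invariant.
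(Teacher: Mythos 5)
Your proposal is correct and matches the paper's argument: the paper states this corollary as an immediate consequence of \cref{propLCellRStar}, and your induction on $n$ (base case via the left version of \cref{lemCleDesc} through \cref{corLeftRightEq}, inductive step via \cref{propLCellRStar} preserving left $p$-cells under $(-)^{\ast}$) is exactly the formalization of that ``immediately implies,'' including the correct matching of the $p$-hypotheses to the crystallographic values $m_{r,t} \in \{3,4,6\}$.
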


In \cite[Conjecture 6.9]{GHCellsInE8} Geck and Halls propose a slight variation
of Vogan's original conjecture (see \cite[Conjecture 3.11]{VoGenTauInv}) which goes as follows:

\begin{conj}
   For any finite Coxeter group $W_f$ two elements $x, y \in W_f$ belong to the
   same Kazhdan-Lusztig left cell if and only if $x$ and $y$ lie in the same Kazhdan-Lusztig
   two-sided cell and in the same $\widetilde{\tau}$-equivalence class.
\end{conj}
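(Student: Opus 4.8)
The plan is to treat the two implications separately, since one of them is immediate from the machinery already developed while the other carries the genuine content. For the forward implication, suppose $x \ceq[0]{L} y$. Then $x \ceq[0]{LR} y$ automatically, because the left cell preorder refines the two-sided cell preorder. Moreover, the $p=0$ case of \cref{propLCellRStar} (which is the classical stability of Kazhdan-Lusztig left cells under every right star-operation $(-)^{\ast}$) together with the fact that a left cell has a constant right descent set shows, by induction along the recursion defining $\approx_n'$, that each Kazhdan-Lusztig left cell lies in a single $\widetilde{\tau}$-equivalence class. Hence $x \ceq[0]{L} y$ forces both conditions on the right-hand side, and everything reduces to the converse.

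For the converse I would first reduce to irreducible $W_f$, since Kazhdan-Lusztig cells, two-sided cells and $\widetilde{\tau}$-equivalence classes are all compatible with the decomposition of a finite Coxeter system into irreducible factors; the dihedral groups $I_2(m)$ can then be disposed of by hand. The key structural point is that $\widetilde{\tau}$ is in general \emph{strictly coarser} than Vogan's $\tau$-invariant: at a bond with $m_{r,t}=4$ the right star-operation fixes the middle element of each $\langle r,t\rangle$-string and therefore imposes no neighbour-matching constraint, whereas $\tau$ compares the whole set $\mathfrak{T}_{r,t}$. Consequently the two-sided cell hypothesis is doing real work even in types where $\tau$ is already a complete invariant, and the entire content of the conjecture is that fixing the two-sided cell restores exactly the separating power lost in passing from $\tau$ to $\widetilde{\tau}$.

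Concretely, I would fix a two-sided cell $\mathbf{c}$ and try to show that $\approx_n'$ separates the left cells contained in $\mathbf{c}$. In type $A_n$ every bond has $m=3$, so $\widetilde{\tau}=\tau$ and the conjecture follows at once from completeness of $\tau$ (\cite{VoGenTauInv}). In types $B_n/C_n$ and $D_n$ I would work inside Garfinkle's domino-tableau model (\cite{GaPrimIdealsI,GaPrimIdealsIII}): there the two-sided cell is recorded by the \emph{shape} of the tableau and the left cell by the equivalence class under moving through open cycles, so the task becomes the purely combinatorial assertion that two standard domino tableaux of the \emph{same shape} with identical $\widetilde{\tau}$-data are related by such moves. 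For $E_6$ one invokes \cite{ToLCellsE6}, and for the remaining finite exceptional groups $F_4$, $E_7$, $E_8$, $H_3$ and $H_4$ one resorts to explicit enumeration of cells and their $\widetilde{\tau}$-data, exactly as Geck and Halls carry out for $E_8$ in \cite{GHCellsInE8}.

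The hardest part, and the reason the statement is only conjectural, is that there is no known uniform or conceptual mechanism explaining why intersecting with a two-sided cell rescues completeness of the invariant. The exceptional groups are finite and hence in principle settled by computation, but the infinite families $B_n/C_n$ and especially $D_n$ for $n \geqslant 6$ require a uniform combinatorial argument: one would need to establish, directly in the type-$D$ domino calculus, that once the shape is fixed the weaker star-operation data $\widetilde{\tau}$ already determines the open-cycle equivalence class. Producing such an argument for the entire family — rather than verifying finitely many ranks — is the main obstacle I would expect to confront.
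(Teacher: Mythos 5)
This statement is not a theorem of the paper at all: it is the Geck--Halls conjecture (\cite[Conjecture 6.9]{GHCellsInE8}), reproduced verbatim, and the paper offers no proof of it. The only evidence the paper records is computational: Geck and Halls verified it in types $BC_n$ and $D_n$ for $n \leqslant 9$ and in all exceptional types, and in type $F_4$ the Kazhdan--Lusztig left cells are known to coincide exactly with the $\widetilde{\tau}$-equivalence classes. So there is no hidden proof to compare yours against, and the honest question is whether your outline itself proves the conjecture --- which, as you say explicitly in your final paragraph, it does not. Your forward implication is correct: containment of a left cell in a two-sided cell is formal, and the stability of Kazhdan--Lusztig left cells under all right star-operations (the $p=0$ specialization of \cref{propLCellRStar}, classically due to Lusztig and extended to all finite Coxeter groups by Bonnaf\'e--Geck) gives that left cells refine $\widetilde{\tau}$-classes; this is precisely the $p=0$ case of the corollary the paper proves right after introducing the $\widetilde{\tau}$-invariant. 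Your treatment of the converse (type $A$ via Vogan's completeness theorem, $E_6$ via Tollefson, exceptional types by enumeration, $B_n/C_n$ and $D_n$ reduced to an unproven domino-combinatorics assertion) reproduces the known state of the art rather than advancing it, so the proposal is a survey of why the statement is plausible, not a proof.

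Two specific claims in your outline also need correction. First, the assertion that $\widetilde{\tau}$ is ``in general strictly coarser'' than Vogan's $\tau$-invariant is not justified by the observation you give: at a bond with $m_{r,t}=4$ the two recursions impose \emph{incomparable} conditions. For an end element $x_1$ of a string $\{x_1 < x_2 < x_3\}$, Vogan's invariant constrains via the middle neighbour $x_2$ (since $\mathfrak{T}_{r,t}(x_1)=\{x_2\}$), whereas $\widetilde{\tau}$ constrains via the far end $x_3 = x_1^{\ast}$; neither condition subsumes the other. Moreover $\widetilde{\tau}$ imposes conditions at bonds with $m_{r,t}\in\{5,6\}$ (relevant in $H_3$, $H_4$, $G_2$) that Vogan's definition ignores entirely, so the comparison can a priori go either way. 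Second, in types $B/C$ (and $D$) the two-sided cell is not recorded by the literal shape of the domino tableau but only by the shape up to Garfinkle's cycle-moving equivalence (equivalently, by the unique special shape in its class), so the combinatorial statement one would actually have to prove differs from the one you formulate. Neither correction changes the verdict: the genuine gap is exactly the one you name yourself --- no uniform argument is known for the infinite families beyond the computationally verified ranks --- and that is why this statement is a conjecture both in this paper and in \cite{GHCellsInE8}.
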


They verified their conjecture in all types $BC_n$ and $D_n$ for $n \leqslant 9$ 
and in all exceptional types. Moreover, they mention that in type $F_4$ the Kazhdan-Lusztig
left cells are precisely the $\widetilde{\tau}$-equivalence classes. From this, 
we deduce that for $p \geqslant 3$ the Kazhdan-Lusztig left cells in type $F_4$
decompose into left $p$-cells. Our explicit computer calculations show that the $p$-canonical
basis and the Kazhdan-Lusztig basis in type $F_4$ only differ for $p \in \{2,3\}$ and 
for  $p=2$ the Kazhdan-Lusztig left cells do not decompose into left $p$-cells.

\subsection{\texorpdfstring{$p$}{p}-Cells in Type \texorpdfstring{$A$}{A}}

Kazhdan-Lusztig cells in type $A$ can be characterized using the Robinson-Schensted
correspondence. This result is usually attributed to \cite[\S4]{KL} even
though the result is not stated in this form and depends on results of Joseph
and Vogan in the setting of primitive ideals. The first combinatorial proof is
due to \cite{GMCellsTypeA} and \cite{ArRSLCells}. In this section we transfer 
almost verbatim Ariki's proof to the modular setting. Since we feel that the proof is not
as well documented as it should be, we decided to give the proof here.

Throughout this section we assume that we used the Cartan matrix in finite type $A_{n-1}$
as input. In this case $(W, S)$ can be identified with $(S_n, \{s_1, \dots, s_{n-1}\})$ 
the symmetric group together with the set of simple transpositions. Letting $S_n$ 
act on $\{1, 2, \dots, n\}$ on the left, we can write any element $w \in S_n$ 
uniquely as $w = w(1) w(2) \dots w(n)$ which we call string notation.

Recall the definition of the elementary Knuth transformation $K_i$ for $1 < i < n$: 
Let $x= x_1 x_2 \dots x_n, y = y_1 y_2 \dots y_n \in S_n$ be two elements of the symmetric group 
in string notation. Write $x \underset{K_i}{\approx} y$ if $x$ and $y$ differ only
on the substrings $x_{i-1} x_i x_{i+1}$ and $y_{i-1} y_i y_{i+1}$ and these substrings 
are related to each other in either of the following two ways
\[ bca \leftrightarrow bac \text{ or } cab \leftrightarrow acb \]
where $a < b < c$. We say that $x$ and $y$ are Knuth equivalent if they are related
by a sequence of elementary Knuth transformations $K_i$ for various $i$. The following 
result follows immediately from the definitions:

\begin{lem}
   \label{lemKEqStarOp}
   $\mathcal{D}_R(s_{i-1}, s_i)$ is the subset of elements in $S_n$ to which $K_i$
   can be applied. Moreover, $w \underset{K_i}{\approx} w^{\ast}$ for all 
   $w \in \mathcal{D}_R(s_{i-1}, s_i)$ where $(-)^{\ast}$ is the right 
   star-operation defined with respect to $\{s_{i-1}, s_i\}$.
\end{lem}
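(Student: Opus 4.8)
The plan is to translate both assertions into the one-line (string) notation and verify them by direct inspection. The key simplification is that $s_{i-1}$ and $s_i$ are adjacent, so $m_{s_{i-1},s_i}=3$ and every right coset of $\langle s_{i-1},s_i\rangle$ is a copy of $S_3$ acting by rearranging three adjacent entries; the whole lemma then reduces to a finite check over the six elements of such a coset.

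First I would record the dictionary between right descents and one-line notation: since $S_n$ acts on the left, right multiplication by $s_j$ swaps the \emph{entries} in positions $j$ and $j+1$, whence $s_{i-1}\in\desc{R}(w)\Leftrightarrow w(i-1)>w(i)$ and $s_i\in\desc{R}(w)\Leftrightarrow w(i)>w(i+1)$. Consequently $w\in\mathcal{D}_R(s_{i-1},s_i)$, i.e. exactly one of $s_{i-1},s_i$ is a right descent, precisely when the middle value $w(i)$ is either the smallest or the largest of the three distinct values $\{w(i-1),w(i),w(i+1)\}$ --- equivalently, when $w(i)$ is not their median. Writing $a<b<c$ for these values in increasing order, the four substrings $bca$, $bac$, $cab$, $acb$ admissible for $K_i$ are exactly those in which the median $b$ does \emph{not} occupy the middle position, while the two forbidden patterns $abc$ and $cba$ are exactly those with $b$ in the middle. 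This matches the descent condition verbatim and proves the first assertion.

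For the second assertion I would make the coset picture explicit. The right coset $w\langle s_{i-1},s_i\rangle$ consists of the six rearrangements of the entries of $w$ in positions $i-1,i,i+1$, all other positions being fixed; its minimal-length representative $\widetilde{w}$ carries the sorted values $(a,b,c)$ in positions $(i-1,i,i+1)$. Computing the coset elements by applying simple reflections on the right, one finds that the two right $\langle s_{i-1},s_i\rangle$-strings (each of length $m-1=2$) are $\{\widetilde{w}s_{i-1},\widetilde{w}s_{i-1}s_i\}$, with substrings $bac$ and $bca$, and $\{\widetilde{w}s_i,\widetilde{w}s_is_{i-1}\}$, with substrings $acb$ and $cab$. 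By \cref{defStringsStars}, for $m=3$ the right star-operation sends the $k$-th element of a string to its $(m-k)=(3-k)$-th element, so it simply interchanges the two members of each string. Thus $(-)^{\ast}$ realizes exactly the moves $bac\leftrightarrow bca$ and $acb\leftrightarrow cab$, which are precisely the two elementary Knuth transformations defining $K_i$. Hence $w\underset{K_i}{\approx}w^{\ast}$ for every $w\in\mathcal{D}_R(s_{i-1},s_i)$.

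The only real care needed --- and the step most likely to conceal an error --- is the bookkeeping of conventions: that the left action makes right multiplication by $s_j$ permute positions rather than values, and the exact matching of the abstract parametrization $\widetilde{w}\cdot{}_x\hat{k}$ from \cref{defStringsStars} with the computed one-line words. Once these are pinned down, both claims are immediate from the finite $S_3$-coset computation, and I do not expect any deeper obstacle.
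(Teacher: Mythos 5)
Your proof is correct, and it is exactly the verification the paper has in mind: the paper states this lemma "follows immediately from the definitions" and gives no written proof, so your direct check — translating right descents into one-line notation, identifying the two right $\langle s_{i-1},s_i\rangle$-strings with the pattern pairs $\{bac,bca\}$ and $\{acb,cab\}$, and noting that for $m=3$ the star operation swaps the two members of each string — is precisely the omitted argument, carried out with the correct conventions.
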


The Robinson-Schensted correspondence (see \cite[\S A.3.3]{BBCoxGrps} or \cite[\S4.1]{FuTableaux}) 
gives a bijection between the symmetric group $S_n$ and pairs of standard tableaux
of the same shape with $n$ boxes. The row-bumping algorithm gives a way to explicitly calculate 
the image $(P(w), Q(w))$ of $w \in S_n$ under the Robinson-Schensted correspondence.
We will need the following important classical result about the Robinson-Schensted
correspondence (see \cite[\S4.1, Corollary to Symmetry Theorem]{FuTableaux}):

\begin{thmlab}[Symmetry Theorem for $S_n$]
   \label{thmSymm}
   If $w \in S_n$ corresponds to $(P(w), Q(w))$, then $w^{-1}$ corresponds to
   $(Q(w), P(w))$ under the Robinson-Schensted correspondence.
\end{thmlab}

Moreover, we need the following result by Knuth (see \cite[Theorem 6]{Kn}):
\begin{thm}
   \label{thmKEqPSymbols}
   Let $x, y \in S_n$. Then $x$ and $y$ are Knuth equivalent if and only if
   $P(x) = P(y)$.
\end{thm}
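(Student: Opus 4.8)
The plan is to prove both implications through the \emph{reading word} $w(T)$ of a standard tableau $T$, defined by concatenating the rows of $T$ from the bottom row to the top row, reading each row from left to right. Throughout, I write $u \equiv u'$ for Knuth equivalence (the equivalence generated by the elementary transformations $K_i$), and $T \leftarrow x$ for the tableau obtained from $T$ by Schensted row-insertion of the letter $x$, so that $P(w) = (\varnothing \leftarrow w_1 \leftarrow \cdots \leftarrow w_n)$ for $w = w_1 \cdots w_n$. The two facts I would isolate are: \textbf{(A)} every word $w$ satisfies $w \equiv w(P(w))$; and \textbf{(B)} each elementary Knuth transformation preserves the insertion tableau, i.e. $x \underset{K_i}{\approx} y \Rightarrow P(x) = P(y)$. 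Granting (A) and (B), the theorem is immediate: if $P(x) = P(y)$ then by (A) one has $x \equiv w(P(x)) = w(P(y)) \equiv y$; conversely, if $x$ and $y$ are Knuth equivalent, applying (B) along a chain of elementary transformations connecting them gives $P(x) = P(y)$.

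For (A) the key step is the single-insertion lemma: for any tableau $T$ and letter $x$ one has $w(T \leftarrow x) \equiv w(T) \cdot x$. I would prove this by induction on the number of rows of $T$. Writing $T$ as its first row $R$ sitting on top of the subtableau $T'$ formed by the remaining rows, insertion of $x$ into $R$ either appends $x$ (the trivial case) or bumps an entry $y$, replacing $R$ by a new row $R'$ and producing the tableau with top row $R'$ and lower rows $T' \leftarrow y$. Since $w(T \leftarrow x) = w(T' \leftarrow y) \cdot R'$ and, by induction, $w(T' \leftarrow y) \equiv w(T') \cdot y$, it remains to check the purely one-row identity $y \cdot R' \equiv R \cdot x$; this is verified by an explicit sequence of elementary Knuth moves that slides the bumped letter $y$ to the front and the inserted letter $x$ into its final position, each move being licensed by the strict inequalities among the row entries. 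Lemma (A) then follows by a second induction on the length of $w$: with $T_k = P(w_1 \cdots w_k)$ one gets $w(T_n) \equiv w(T_{n-1}) \cdot w_n \equiv \cdots \equiv w_1 \cdots w_n = w$, and $T_n = P(w)$.

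For (B) the main point is that Schensted insertion depends only on the current tableau and not on the word producing it. Hence, writing an elementary transformation as $x = u \cdot \pi \cdot v$ and $y = u \cdot \pi' \cdot v$ with $\pi, \pi'$ the two Knuth-related triples, it suffices to prove that inserting $\pi$ and inserting $\pi'$ into the common tableau $P(u)$ yield the \emph{same} tableau: the subsequent insertion of the shared suffix $v$ then preserves this equality, giving $P(x) = P(y)$. Thus everything reduces to the finite verification that, for $a < b < c$, inserting $bac$ and $bca$ (respectively $cab$ and $acb$) into an arbitrary tableau produces identical results. After inserting the common first letter, this becomes a statement about inserting the remaining two letters, where --- crucially --- the value of the already-inserted letter controls the outcome; this is exactly what the Knuth relations encode.

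I expect step (B), and within it the triple-insertion verification, to be the main obstacle: unlike the bookkeeping in (A), it requires tracing the two bumping cascades through all rows and checking that they coincide, and it genuinely relies on the constraint $a < b < c$ (inserting merely two letters in the two orders does not in general commute, as the empty-tableau case already shows). The one-row identity inside (A) is elementary but also needs care to get the chain of $K_i$-moves right. Everything else is formal once these two lemmas are in hand.
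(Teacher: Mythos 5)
There is nothing in the paper to compare your argument against: the paper does not prove \cref{thmKEqPSymbols} but imports it as a classical result, citing Knuth's original article \cite[Theorem 6]{Kn}, just as it imports the Symmetry Theorem (\cref{thmSymm}) from \cite{FuTableaux}. So any proof you give is automatically a ``different route'' --- the paper's route is a black box, which is reasonable there since the theorem plays a purely auxiliary role in establishing \cref{thmPCellsTypeA}.

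As for your proposal itself: it is a correct outline of the standard proof of Knuth's theorem (essentially the one in \cite{Kn}, in the presentation one finds in \cite{FuTableaux}). Part (A) is sound: the identity $w(T \leftarrow x) \equiv w(T)\cdot x$ does reduce, row by row, to the one-row statement $y\cdot R' \equiv R\cdot x$, which is indeed settled by sliding $x$ leftward past the entries larger than it (moves of type $bca \leftrightarrow bac$) and then carrying the bumped letter $y$ to the front (moves of type $acb \leftrightarrow cab$); iterating gives $w \equiv w(P(w))$, and with it the implication $P(x)=P(y) \Rightarrow x \equiv y$. Part (B) is also reduced correctly: since row insertion depends only on the current tableau, Knuth-invariance of $P$ follows once one knows that $T \leftarrow bac = T \leftarrow bca$ and $T \leftarrow cab = T \leftarrow acb$ for \emph{every} tableau $T$ and $a<b<c$, and your empty-tableau example correctly isolates why the third letter (equivalently the constraint $a<b<c$) is indispensable. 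The one caveat is that this triple-insertion invariance is not the routine ``finite verification'' your wording suggests --- it quantifies over all tableaux $T$, and proving it requires an induction tracking how the two bumping cascades of the inserted triple interleave as they descend through the rows; this computation is the actual core of Knuth's theorem and occupies most of his proof. So what you have is a complete proof of one implication, plus a correct and complete reduction of the other implication to the key lemma, with that lemma identified but not carried out.
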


The goal is to prove the following theorem which is known for Kazhdan-Lusztig cells
(see \cite[Theorem 5.4.1 and Corollary 5.4.2]{W1} or \cite[Chapter 6, Exercise 11]{BBCoxGrps}):
\begin{thm}
   \label{thmPCellsTypeA}
   For $x, y \in S_n$ we have:
   \begin{align*}
      x \ceq{L} y &\Leftrightarrow Q(x) = Q(y) \\
      x \ceq{R} y &\Leftrightarrow P(x) = P(y) \\
      x \ceq{LR} y &\Leftrightarrow Q(x) \text{ and } Q(y) \text{ have the same shape}
   \end{align*}
   In particular, Kazhdan-Lusztig cells and $p$-cells of $S_n$ coincide.
\end{thm}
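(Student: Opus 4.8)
The plan is to follow Ariki's strategy, proving the three equivalences in the order right, left, two-sided, and in each case showing that the two partitions refine one another.

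\textbf{Right cells.} I would first prove $P(x)=P(y)\Rightarrow x\ceq{R}y$. By \cref{thmKEqPSymbols}, $P(x)=P(y)$ means $x$ and $y$ are Knuth equivalent, hence joined by a chain of elementary Knuth transformations $K_i$; by \cref{lemKEqStarOp} each such move carries $w\in\mathcal{D}_R(s_{i-1},s_i)$ to $w^{\ast}$, and by \cref{lemRStarPCell} (see also \cref{corRStringsInRCells}) $w$ and $w^{\ast}$ lie in the same right $p$-cell. Thus every Knuth class sits inside a single right $p$-cell, i.e. each right $p$-cell is a union of $P$-classes. For the opposite inclusion I would quote \cref{corLCellsDecomp}: in type $A$ the Kazhdan--Lusztig left cells decompose into left $p$-cells, so passing to inverses via \cref{corLeftRightEq} the Kazhdan--Lusztig right cells decompose into right $p$-cells; since these right cells are precisely the $P$-classes (the classical result recalled above for Kazhdan--Lusztig cells), every $P$-class is a union of right $p$-cells. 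The two inclusions force right $p$-cells $=$ $P$-classes, which is $x\ceq{R}y\Leftrightarrow P(x)=P(y)$.

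\textbf{Left cells.} This is now formal: \cref{corLeftRightEq} gives $x\ceq{L}y\Leftrightarrow x^{-1}\ceq{R}y^{-1}\Leftrightarrow P(x^{-1})=P(y^{-1})$, and the Symmetry Theorem \cref{thmSymm} identifies $P(x^{-1})=Q(x)$, so $x\ceq{L}y\Leftrightarrow Q(x)=Q(y)$.

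\textbf{Two-sided cells.} The implication ``same shape $\Rightarrow x\ceq{LR}y$'' is clean: if $Q(x)$ and $Q(y)$ share a shape $\lambda$, then $P(x)$ and $Q(y)$ are standard tableaux of shape $\lambda$, so by bijectivity of Robinson--Schensted there is a unique $z\in S_n$ with $P(z)=P(x)$ and $Q(z)=Q(y)$; the first two parts give $x\ceq{R}z$ and $z\ceq{L}y$, whence $x\ceq{LR}y$. The reverse implication is the crux, and I expect it to be the main obstacle. The difficulty is that the first two parts only determine the equivalence classes of $\cle{R}$ and $\cle{L}$, whereas $\ceq{LR}$ is the equivalence attached to the full preorder $\cle{LR}$, which can a priori be strictly coarser than the join of the left and right cell partitions; indeed exactly this failure occurs outside type $A$ (e.g. in type $C_3$ at $p=2$, see \cref{secDecompCounterex}, where a single $p$-cell meets two Kazhdan--Lusztig cells). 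The route I would take is to show that in type $A$ the two-sided $p$-cell preorder refines the Kazhdan--Lusztig one, equivalently that the two-sided Kazhdan--Lusztig cells decompose into two-sided $p$-cells: reducing via \cref{lemGenCells} to a single elementary step $\p{\mu}_{s,y}^{x}\neq0$, one wants $\mathrm{sh}(x)\trianglelefteq\mathrm{sh}(y)$ in dominance order, so that $x\ceq{LR}y$ forces $\mathrm{sh}(x)=\mathrm{sh}(y)$. Combined with the classical identification of the two-sided Kazhdan--Lusztig cells with shape classes and with the gluing above, this yields $x\ceq{LR}y\Leftrightarrow Q(x)$ and $Q(y)$ have the same shape. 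The obstruction to this monotonicity is that in positive characteristic $\pcan{y}$ may involve Kazhdan--Lusztig basis elements $\kl{u}$ with $u$ in strictly higher two-sided cells, so controlling the support of $\pcan{y}$ against the cell filtration---using the already established right- and left-cell equalities in type $A$---is the technical heart of this last step.
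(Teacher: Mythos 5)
Your treatment of the left and right cells is correct, but it takes a genuinely different route from the paper. For the hard inclusion the paper does \emph{not} invoke \cref{corLCellsDecomp}: it transfers Ariki's combinatorial argument to the modular setting, passing from $x \ceq{L} y$ to elements $\hat{x}, \hat{y}$ whose $P$-symbols are column superstandard, transporting Knuth moves between them via the descent-set compatibility (\cref{lemCleDesc}) and \cref{propLCellRStar}, and then comparing descent sets under row bumping to force equality of shapes and of $Q$-symbols. Your shortcut --- Knuth classes sit inside right $p$-cells by \cref{thmKEqPSymbols}, \cref{lemKEqStarOp} and \cref{lemRStarPCell}, while right $p$-cells sit inside Knuth classes by \cref{corLCellsDecomp}, \cref{corLeftRightEq} and the classical identification of Kazhdan--Lusztig right cells with $P$-classes --- is legitimate, since \cref{corLCellsDecomp} is established earlier in the paper and independently of this theorem. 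It buys brevity, at the price of resting on Vogan's completeness theorem for the generalized $\tau$-invariant in type $A$; the paper's argument is deliberately self-contained (see the remark following the theorem, which lists exactly the ingredients its proof needs).

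The genuine gap is in the direction $x \ceq{LR} y \Rightarrow \mathrm{sh}(Q(x)) = \mathrm{sh}(Q(y))$. You correctly diagnose that this is not formal ($\ceq{LR}$ could a priori be coarser than the join of $\ceq{L}$ and $\ceq{R}$, as indeed happens in type $C_3$ for $p=2$, \cref{secDecompCounterex}), you propose proving dominance monotonicity of elementary steps via \cref{lemGenCells}, and you then explicitly defer the real difficulty --- that $\pcan{y}$ may have Kazhdan--Lusztig constituents lying in strictly higher two-sided cells --- calling its resolution ``the technical heart'' of the step. But that deferred step \emph{is} the entire content of the implication; as written, your proposal does not prove it, and no result established earlier in the paper can simply be cited for it (the $\tau$-invariant results concern only one-sided cells).

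The paper closes precisely this gap with a short argument you could adopt. For $\pi = \mathrm{sh}(Q(x))$ let $w_\pi$ be the element whose $P$- and $Q$-symbols both equal the column superstandard tableau of shape $\pi$; it is the longest element of a standard parabolic subgroup of $S_n$, so $\pcan{w_\pi} = \kl{w_\pi}$. By the already-proved direction $\Leftarrow$ (both modularly and classically) one has $x \ceq{LR} w_\pi$ and $x \ceq[0]{LR} w_\pi$. Now $x \cle{LR} w_\pi$ means $\pcan{x}$ appears in some $h\, \pcan{w_\pi}\, h' = h\, \kl{w_\pi}\, h'$, whose Kazhdan--Lusztig support lies in $\{u \mid u \cle[0]{LR} w_\pi\}$; taking $h, h'$ to be $p$-canonical basis elements, positivity of the base change and structure coefficients (\cref{propPCanProps}(iii) and (vi)) rules out cancellation, so every $z$ with $\pre{p}{m}_{z,x} \neq 0$ satisfies $z \cle[0]{LR} w_\pi \ceq[0]{LR} x$. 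From this support control it follows that $y \cle{LR} x$ implies $y \cle[0]{LR} x$, and the classical characterization of Kazhdan--Lusztig two-sided cells by shapes finishes the proof. No dominance-order combinatorics is needed: the special element $w_\pi$, on which the $p$-canonical and Kazhdan--Lusztig bases agree, is exactly what supplies the cell-filtration control you flagged as missing.
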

\begin{remark}
   To prove the characterization of the left $p$-cell cells we only need
   the following ingredients: For the direction $\Leftarrow$ we need \cref{corLeftRightEq}
   and \cref{lemRStarPCell}. The other direction is more involved and relies on
   the version of \cref{lemCleDesc} for left cells and crucially on \cref{propLCellRStar}.
\end{remark}
\begin{proof}
   We will first deal with the statement about left $p$-cells:
   \begin{description}[leftmargin=0pt]
   \item[$\Leftarrow$] By \cref{thmSymm} we have $P(x^{-1}) = Q(x) = Q(y) = P(y^{-1})$ which implies
         by \cref{thmKEqPSymbols} that $x^{-1}$ and $y^{-1}$ are related by a sequence
         of elementary Knuth transformations (i.e. right start operations with respect
         to different subsets of $S$ consisting of two neighbouring simple reflections).
         Successive applications of \cref{lemRStarPCell} show that $x^{-1} \ceq{R} y^{-1}$.
         This is equivalent to $x \ceq{L} y$ by \cref{corLeftRightEq}.
   \item[$\Rightarrow$] Denote the shape of $Q(x)$ (resp. $Q(y)$) by $\pi_x$ 
         (resp. $\pi_y$) and let $P_{x}$ (resp. $P_{y}$) be the column superstandard tableau
         (see \cite[\S A3.5]{BBCoxGrps} for the definition)
         of shape $\pi_x$ (resp. $\pi_y$). The Robinson-Schensted correspondence 
         gives elements $\hat{x}, \hat{y} \in S_n$ with $P$ and $Q$-symbols
         $(P_x, Q(x))$ and $(P_y, Q(y))$ respectively. The implication we proved above gives
         $x \ceq{L} \hat{x}$ and $y \ceq{L} \hat{y}$ which implies by our assumption
         $\hat{x} \ceq{L} \hat{y}$. In order to show $Q(x) = Q(y)$ 
         consider the elements $x', y'' \in S_n$ corresponding to $(P_x, P_x)$
         and $(P_y, P_y)$ respectively (under the Robinson-Schensted correspondence).
         \cref{thmKEqPSymbols} implies that the elements $\hat{x}$ and $x'$ as well
         as $\hat{y}$ and $y''$ are related by sequences of Knuth moves:
         \begin{align*}
            x' &= K_{i_r} \circ \dots \circ K_{i_1}(\hat{x}) \\
            y'' &= K_{j_s} \circ \dots \circ K_{j_1}(\hat{y})
         \end{align*}
         \cref{lemCleDesc} for the left $p$-cell preorder shows that the elements 
         $\hat{x}$ and $\hat{y}$ have the same right descent set, so the same
         right star-operations or Knuth moves (see \cref{lemKEqStarOp}) can be applied
         to both elements. By \cref{propLCellRStar} we have 
         $K_{i_1} (\hat{x}) \ceq{L} K_{i_1}(\hat{y})$ and $K_{j_1} (\hat{x}) \ceq{L} K_{j_1}(\hat{y})$. 
         Therefore, we can repeat the argument to see that the following elements are well-defined:
         \begin{align*}
            x'' &= K_{j_s} \circ \dots \circ K_{j_1}(\hat{x}) \\
            y' &= K_{i_r} \circ \dots \circ K_{i_1}(\hat{y})
         \end{align*}
         Moreover, we have $x' \ceq{L} y'$ as well as $x'' \ceq{L} y''$ and thus
         $\desc{R}(x') = \desc{R}(y')$ and $\desc{R}(x'') = \desc{R}(y'')$. Using 
         \cref{thmKEqPSymbols} we see that $P(x'') = P(\hat{x}) = P_x$ and 
         $P(y') = P(\hat{y}) = P_y$.
         
         Denote the column lengths of $\pi_x$ (resp. $\pi_y$) by $l_1, l_2, \dots$
         (resp. $k_1, k_2, \dots$). It follows from the row-bumping algorithm that
         $x'$ is the longest element in the parabolic subgroup 
         $S_{l_1} \times S_{l_2} \times \dots$ of $S_n$, i.e. in string notation the element
         \[ l_1, l_1 - 1, \dots , 1, l_1+ l_2, l_1 + l_2 - 1, \dots, l_1+1, \dots \text{.} \]
         From $\desc{R}(x') = \desc{R}(y')$ and the characterization of right descent sets
         for elements in $S_n$ in terms of inversions, we deduce that in the string
         notation for $y'$ the first $l_1$ letters are decreasing as well as the next
         $l_2$ letters and so on. Similarly, we may use that $y''$ is the longest element
         in the parabolic subgroup $S_{k_1} \times S_{k_2} \times \dots$ of $S_n$
         and $\desc{R}(x'') = \desc{R}(y'')$ to deduce that in the string notation
         of $x''$ the first $k_1$ letters are decreasing, the next $k_2$ letters are
         in decreasing order, etc.
         
         Applying the row-bumping algorithm to $y'$ to calculate $P(y') = P_y$, we 
         obtain the inequality $k_1 \geqslant l_1$. Using $x''$ instead,
         we get the opposite inequality giving $l_1 = k_1$. Moreover,
         this shows that when inserting the next $l_2$ letters of $y'$, no row
         bumping occurs in the first column (otherwise we would have $k_1 > l_1$)
         and thus we have $k_2 \geqslant l_2$. Again, we may use $x''$ to
         get the opposite inequality and to show $k_2 = l_2$. Repeating the argument,
         we get $\pi_x = \pi_y$ and thus $Q(y') = P_y = P_x = Q(x'')$ (by the definition
         of the column superstandard tableau and the fact that $Q(-)$ encodes
         the order in which boxes are added in the course of the row-bumping
         algorithm). This shows $y' = x' = y'' = x''$ as well as $\hat{x} = \hat{y}$
         by unravelling the sequences of Knuth moves. Finally, the Robinson-Schensted
         correspondence gives $Q(x) = Q(y)$ and finishes the proof of the characterization
         of left $p$-cells in terms of $Q$-symbols.
   \end{description}
   
   Using \cref{thmSymm} and \cref{corLeftRightEq} we obtain the version for right 
   $p$-cells. Finally, we prove the statement about two-sided $p$-cells:
   
   \begin{description}[leftmargin=0pt]
    \item[$\Leftarrow$] \cref{thmKEqPSymbols} shows that any two elements of $S_n$ with 
   the same $P$-symbol can be related by a sequence of elementary Knuth transformations. Dually, any two
   elements with the same $Q$-symbol are linked by a sequence of elementary dual Knuth
   transformations, which we did not introduce, but which correspond to left star-operations.
   Given an element $x \in S_n$ we can thus transform its $P$ and $Q$ symbols using
   Knuth transformations and their duals into any pair of given standard tableaux of
   the same shape. Denote by $\pi$ the shape of $Q(x)$ and let $P_{\pi}$ be the 
   column superstandard tableau of shape $\pi$. The statement about left and
   right $p$-cells shows that $x$ lies in the same two-sided $p$-cell as the 
   element $w_{\pi}$ corresponding to $(P_{\pi}, P_{\pi})$ under the Robinson-Schensted 
   correspondence. From this, the reader easily deduces the direction $\Leftarrow$.
   
   \item[$\Rightarrow$] Given an element $x \in S_n$, denote by $\pi$ the shape
   of $Q(x)$. Let $w_{\pi}$ be as defined above. We claim that $y \cle{LR} x$
   implies $y \cle[0]{LR} x$.
   
   Note that $w_{\pi}$ is the longest element in a standard parabolic subgroup 
   of $S_n$. Thus we have 
   \begin{equation}
      \label{eqPCanEqCanSn}
      \pcan{w_{\pi}} = \kl{w_{\pi}}\text{.} 
   \end{equation}
   As $Q(w_{\pi})$ and $Q(x)$ have the same shape, the direction $\Leftarrow$ implies
   $w_{\pi} \ceq{LR} x$ and $w_{\pi} \ceq[0]{LR} x$. The relation $x \cle{LR} w_{\pi}$
   together with (\ref{eqPCanEqCanSn}) gives us for all $z \leqslant x$:
   \[ \pre{p}{m}_{z, x} \ne 0 \Rightarrow z \cle[0]{LR} w_{\pi} \]
   Therefore, any element $y \cle{LR} x$ satisfies $y \cle[0]{LR} x$.
   
   Finally, this finishes the proof of the direction $\Rightarrow$ by using
   the characterization of the Kazhdan-Lusztig two-sided cells in terms of the
   shape of the $Q$-symbols.\qedhere
   \end{description}
%    Given $x, y \in S_n$ there exist standard tableaux $P$
%    and $Q$ such that the left $p$-cell $C_{L, x}$ of $x$ is given by
%    \[ \{ w \in S_n \; \vert \; w \text{ corresponds to } (P(w), Q) \} \]
%    end the right $p$-cell $C_{R, y}$ of $y$ is given by
%    \[ \{ w \in S_n \; \vert \; w \text{ corresponds to } (P, Q(w)) \} \text{.} \]
%    The intersection of $C_{L, x}$ and $C_{R, y}$ consists of the    
\end{proof}

%\todo[inline]{In order to make this work, one has to understand minimal elements
% in a two-sided KL-cell with respect to the weak Bruhat order!}
% Another way to see that two-sided Kazhdan-Lusztig cells decompose into $p$-cells 
% would the following: The result in characteristic $0$ implies that Kazhdan-Lusztig 
% two-sided cells in type $A$ are $2$-connected. In \cite[Theorem 1.1]{HMinEltsIn2Cells}
% Hohlweg shows that the elements of minimal length in a two-sided Kazhdan-Lusztig cell
% are longest elements in certain standard parabolic subgroups. Combining these results
% with the version for two-sided cells of our criterion described in \cref{secDecompCriterion}
% gives the claim.

\begin{remark}
   Even though Kazhdan-Lusztig left (resp. right) cells coincide with
   left (resp. right) $p$-cells, it is not known whether the corresponding
   preorders coincide as well. Leonardo Patimo has a short proof of the fact
   that the dominance order on partitions is generated by the weak
   Bruhat order. This implies that the Kazhdan-Lusztig two-sided cell preorder
   coincides with the two-sided $p$-cell preorder.
\end{remark}

\cref{thmSymm} implies that the involutions in $S_n$ are precisely those elements $w$
that satisfy $P(w) = Q(w)$. This is the only missing observation for the next
result:

\begin{cor}
   Each left $p$-cell contains a unique involution. Each two-sided $p$-cell 
   contains the longest element in a standard parabolic subgroup.
   Let $C_L$ (resp. $C_R$) be a left (resp. right) $p$-cell. Then we have:
   \[ \lvert C_L \cap C_R \rvert = 
   \begin{cases}
      1 & \text{if } C_L \text{ and } C_R \text{ lie in the same two-sided $p$-cell,}\\
      0 & \text{otherwise.}
   \end{cases} \]
\end{cor}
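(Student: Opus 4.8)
The plan is to derive all three assertions directly from the Robinson--Schensted characterisation of $p$-cells in \cref{thmPCellsTypeA}, the Symmetry Theorem \cref{thmSymm}, and the bijectivity of the correspondence $w \mapsto (P(w), Q(w))$, together with the fact recorded above that an element of $S_n$ is an involution precisely when $P(w) = Q(w)$.

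First I would treat the unique involution: by \cref{thmPCellsTypeA} a left $p$-cell is a fibre $\{ w \in S_n \; \vert \; Q(w) = Q_0 \}$ for a fixed standard tableau $Q_0$, so an involution lying in it must satisfy $P(w) = Q(w) = Q_0$. Since $(Q_0, Q_0)$ is a pair of tableaux of the same shape, bijectivity produces exactly one such $w$, which is then the unique involution in the cell. For the second claim I would recall from the proof of \cref{thmPCellsTypeA} that, writing $l_1, l_2, \dots$ for the column lengths of a shape $\pi$, the element $w_\pi$ whose $P$- and $Q$-symbol both equal the column superstandard tableau of shape $\pi$ is the longest element of the standard parabolic subgroup $S_{l_1} \times S_{l_2} \times \cdots$. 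As $Q(w_\pi)$ has shape $\pi$, the element $w_\pi$ lies in the two-sided cell indexed by $\pi$, which by \cref{thmPCellsTypeA} is exactly the set of $w$ with $Q(w)$ of shape $\pi$.

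For the intersection formula I would write $C_L = \{ w \; \vert \; Q(w) = Q_0 \}$ and $C_R = \{ w \; \vert \; P(w) = P_0 \}$, so that $C_L \cap C_R = \{ w \; \vert \; P(w) = P_0, \; Q(w) = Q_0 \}$. By bijectivity this set is empty unless $P_0$ and $Q_0$ have a common shape, in which case it is a single element. On the other hand, every element of $C_L$ (resp.\ $C_R$) has symbols of the same shape as $Q_0$ (resp.\ $P_0$), so by the two-sided part of \cref{thmPCellsTypeA} the cells $C_L$ and $C_R$ lie in the same two-sided $p$-cell if and only if $Q_0$ and $P_0$ have the same shape. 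Matching the two conditions yields the stated case distinction.

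I do not anticipate a real obstacle, since each part is a direct translation through the bijection; the only point needing a little care is the comparison ``same two-sided cell $\Leftrightarrow$ common shape'', which works because $P(w)$ and $Q(w)$ always have equal shape, letting one compare the shape invariant of a left cell with that of a right cell.
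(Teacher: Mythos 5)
Your proof is correct and follows exactly the route the paper intends: the paper gives no separate argument for this corollary, stating only that the characterisation of involutions as the elements with $P(w) = Q(w)$ (via \cref{thmSymm}) is ``the only missing observation'' on top of \cref{thmPCellsTypeA} and the bijectivity of the Robinson--Schensted correspondence, which is precisely the combination you spell out. Your identification of $w_\pi$ as the longest element of $S_{l_1} \times S_{l_2} \times \cdots$ also matches the observation already made inside the paper's proof of \cref{thmPCellsTypeA}.
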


Let $\pi$ be a permutation of $n$. Recall Frame, Robinson and Thrall's hook length
formula for the number of standard tableaux of shape $\pi$ (\cite[\S4.3]{FuTableaux}):
\[ f^{\pi} = \frac{n!}{\prod_{(i, j) \in \pi} h_{\pi}(i, j)} \]
where $h_{\pi}(i, j)$ denotes the number of boxes in the hook of $(i, j)$ in $\pi$,
i.e. in formulas $h_{\pi}(i, j) = \lvert \{ (a, b) \in \pi \; \vert \; (a = i 
\text{ and } b \geqslant j) \text{ or } (a \geqslant i \text{ and } b = j) \} \rvert$.
The following corollary shows that the hook length formula gives the answer to
some counting problems related to $p$-cells:
\begin{cor}  
   Let $C$ be a two-sided $p$-cell. Denote by $\pi$ the shape of the $P$ and $Q$-symbols 
   of the elements in $C$. Then the following holds:
   \begin{enumerate}
      \item The number of left (or right) $p$-cells in $C$ is given by $f^{\pi}$.
      \item For any left (or right) $p$-cell contained in $C$, the corresponding
            $p$-cell module is free of rank $f^{\pi}$ over $\Z[v, v^{-1}]$.
   \end{enumerate}
\end{cor}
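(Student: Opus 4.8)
The plan is to read both assertions directly off the Robinson--Schensted characterization of $p$-cells in \cref{thmPCellsTypeA}, using that $w \mapsto (P(w), Q(w))$ is a bijection from $S_n$ onto pairs of standard tableaux of the same shape.

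First I would record what $C$ is. By \cref{thmPCellsTypeA} two elements of $S_n$ lie in the same two-sided $p$-cell precisely when their $Q$-symbols (equivalently their $P$-symbols) have the same shape, so $C = \{ w \in S_n \; \vert \; Q(w) \text{ has shape } \pi \}$, and the Robinson--Schensted correspondence restricts to a bijection between $C$ and the set of pairs $(P, Q)$ of standard tableaux of shape $\pi$. For part (i), the same theorem says that two elements of $C$ lie in the same left $p$-cell if and only if they have equal $Q$-symbol; hence the left $p$-cells contained in $C$ are exactly the fibres of the map $w \mapsto Q(w)$ on $C$. Since every standard tableau of shape $\pi$ occurs as some $Q(w)$ with $w \in C$ (pair it with any $P$ of shape $\pi$), the number of such fibres is the number $f^{\pi}$ of standard tableaux of shape $\pi$. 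The statement for right $p$-cells is identical with $P$ in place of $Q$ (or one may invoke \cref{thmSymm} together with \cref{corLeftRightEq} to transport the left statement to inverses).

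For part (ii), I would fix a left $p$-cell $C_L \subseteq C$, which by the above is a single fibre $\{ w \in C \; \vert \; Q(w) = Q_0 \}$ for some standard tableau $Q_0$ of shape $\pi$. Under Robinson--Schensted such $w$ correspond bijectively to their $P$-symbols, which range over all standard tableaux of shape $\pi$, so $\lvert C_L \rvert = f^{\pi}$. The associated left $p$-cell module is the subquotient $\heck[\cle{L} C_L] / \heck[\clt{L} C_L]$; directly from the definition of the left $p$-cell preorder the $\cle{L}$-lower sets span left submodules of $\heck$, and passing to this subquotient leaves exactly one basis vector (the image of $\pcan{w}$) for each $w \in C_L$. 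Thus the module is free over $\Z[v, v^{-1}]$ of rank $\lvert C_L \rvert = f^{\pi}$, and symmetrically for right $p$-cells.

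The whole argument is bookkeeping on top of \cref{thmPCellsTypeA}, and I expect no serious obstacle. The only step deserving a word of justification is the freeness-and-rank claim for the cell module, which is the standard cell-module construction and follows from the fact that $\cle{L}$-lower sets are left submodules; everything else is immediate from the bijectivity of Robinson--Schensted and the constancy of shape along its fibres.
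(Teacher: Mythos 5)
Your proof is correct and is essentially the argument the paper intends: the corollary is stated there without a separate proof precisely because, as you show, it follows by reading the left/right/two-sided characterizations of \cref{thmPCellsTypeA} against the bijectivity of the Robinson--Schensted correspondence and the hook length formula. Your one substantive point --- that the cell module $\heck[\cle{L} C_L] / \heck[\clt{L} C_L]$ is free with basis the images of $\pcan{w}$, $w \in C_L$, because $\cle{L}$-lower sets span left submodules --- is exactly the standard construction the paper uses elsewhere (e.g. for $\heck[\cle{R} C_{6}] / \heck[\clt{R} C_{12}]$), so nothing is missing.
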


Observe that the preorder $\cle{LR}$ is by definition generated by $\cle{L}$ and $\cle{R}$.
It is in general not clear though whether $\ceq{LR}$ is also generated by $\ceq{L}$
and $\ceq{R}$. For the Kazhdan-Lusztig cell preorder this follows using certain
properties of Lusztig's $a$-function (see \cite[Conjectures 14.2 P9 and P10]{LuUneq}).

\begin{cor}
   The cell preorder $\ceq{LR}$ is generated by $\ceq{L}$ and $\ceq{R}$ in type $A$.
\end{cor}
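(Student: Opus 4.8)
First, a point of reading. Although the corollary speaks of a ``preorder'', the symbol $\ceq{LR}$ denotes the two-sided $p$-cell \emph{equivalence} relation, not the preorder $\cle{LR}$; the latter is already generated by $\cle{L}$ and $\cle{R}$ by definition, as observed just above the corollary. So the statement to prove is that $\ceq{LR}$ equals the equivalence relation generated by $\ceq{L}$ and $\ceq{R}$: for $x,y \in S_n$, one has $x \ceq{LR} y$ if and only if there is a chain $x = z_0, z_1, \dots, z_k = y$ with $z_{i-1} \ceq{L} z_i$ or $z_{i-1} \ceq{R} z_i$ for each $i$. One direction is formal and type-independent: lying in a common left (resp.\ right) $p$-cell forces lying in a common two-sided $p$-cell, since $\cle{L}$ and $\cle{R}$ each imply $\cle{LR}$; thus $\ceq{L}$ and $\ceq{R}$ both refine $\ceq{LR}$, and as $\ceq{LR}$ is transitive, any such chain keeps $x$ and $y$ inside a single two-sided $p$-cell.

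The substance is the converse, and the plan is to produce an explicit chain of length two from the Robinson--Schensted description in \cref{thmPCellsTypeA}. Suppose $x \ceq{LR} y$. By \cref{thmPCellsTypeA} this means $Q(x)$ and $Q(y)$ have a common shape $\lambda$, so that $P(x), Q(x), P(y), Q(y)$ are all standard tableaux of shape $\lambda$. Since $P(x)$ and $Q(y)$ then have the same shape, the Robinson--Schensted correspondence yields a unique $z \in S_n$ with $(P(z), Q(z)) = (P(x), Q(y))$. Now $P(z) = P(x)$ gives $z \ceq{R} x$ and $Q(z) = Q(y)$ gives $z \ceq{L} y$, both by the right- and left-cell characterizations of \cref{thmPCellsTypeA}. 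Hence $x \ceq{R} z \ceq{L} y$ is the desired chain, proving that $x$ and $y$ are equivalent under the relation generated by $\ceq{L}$ and $\ceq{R}$.

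I do not expect a genuine obstacle here: everything rests on \cref{thmPCellsTypeA}, which identifies the left (resp.\ right) $p$-cell through an element with the fibre of the $Q$-symbol (resp.\ $P$-symbol) map, so that the bijectivity of Robinson--Schensted on pairs of tableaux of a fixed shape is precisely what allows the unmatched symbol to be set to any standard tableau of shape $\lambda$; the interpolating element $z = \mathrm{RS}^{-1}(P(x), Q(y))$ then links $x$ and $y$ in two steps. The point worth stressing — and the one that answers the worry about handling $\cle{LR}$ — is that no preorder comparison enters the argument at all: a chain of \emph{equivalences} suffices, and in type $A$ the explicit combinatorics supply it directly, in contrast to the Kazhdan--Lusztig situation recalled before the corollary, where the analogous passage goes through properties P9 and P10 of Lusztig's $a$-function \cite{LuUneq}.
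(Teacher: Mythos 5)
Your proof is correct and is exactly the argument the paper intends: the corollary is stated as an immediate consequence of \cref{thmPCellsTypeA}, with the interpolating element $z = \mathrm{RS}^{-1}(P(x), Q(y))$ giving the two-step chain $x \ceq{R} z \ceq{L} y$ precisely because Robinson--Schensted is a bijection onto pairs of same-shape standard tableaux. Your reading of the statement (equivalence relations rather than preorders, since $\cle{LR}$ is generated by $\cle{L}$ and $\cle{R}$ by definition) also matches the remark preceding the corollary in the paper.
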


As a consequence of \cref{lemCellModStar}, we get the following result which is
known for Kazhdan-Lusztig left cell modules (see \cite[Theorem 6.5.2]{BBCoxGrps})
and for which the proof works exactly as in characteristic $0$:

\begin{cor}
   Let $C_1$ and $C_2$ be left $p$-cells in the same two-sided $p$-cell. Then
   the corresponding left cell modules are isomorphic.
\end{cor}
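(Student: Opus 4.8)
The plan is to connect the two left $p$-cells by a chain of right star-operations and to invoke \cref{lemCellModStar} at each step. The starting point is \cref{thmPCellsTypeA}: the $Q$-symbol is constant on each left $p$-cell, so $C_1$ and $C_2$ have well-defined $Q$-symbols $Q_1$ and $Q_2$, and since $C_1$ and $C_2$ lie in the same two-sided $p$-cell these two tableaux have the same shape $\pi$. Note also that in finite type $A$ every pair of distinct simple reflections satisfies $m_{r,t}\in\{2,3\}$, so the standing assumptions on $p$ in \cref{secStrCells} (and hence \cref{lemCellModStar} and \cref{propCellStar}) hold automatically for all $p$.

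First I would reduce to finding, inside a single right $p$-cell, representatives of both $C_1$ and $C_2$. Pick $x_1 \in C_1$; its $P$-symbol $P(x_1)$ has shape $\pi$. Because the Robinson-Schensted correspondence is a bijection onto pairs of standard tableaux of equal shape, there is a unique $x_2 \in S_n$ with $(P(x_2), Q(x_2)) = (P(x_1), Q_2)$, and by \cref{thmPCellsTypeA} this $x_2$ lies in $C_2$. Since $P(x_1) = P(x_2)$, \cref{thmKEqPSymbols} shows that $x_1$ and $x_2$ are Knuth equivalent, and \cref{lemKEqStarOp} identifies each elementary Knuth transformation with a right star-operation relative to a pair $\{s_{i-1}, s_i\}$. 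Hence there is a chain $x_1 = y_0, y_1, \dots, y_k = x_2$ where $y_j = y_{j-1}^{\ast}$ is obtained from $y_{j-1}$ by the right star-operation associated to some pair $\{r_j, t_j\}$, and in particular $y_{j-1} \in \mathcal{D}_R(r_j, t_j)$ for every $j$.

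Next I would upgrade this chain of elements to a chain of left $p$-cells with isomorphic cell modules. Let $C^{(j)}$ be the left $p$-cell containing $y_j$, so that $C^{(0)} = C_1$ and $C^{(k)} = C_2$. To apply \cref{lemCellModStar} at step $j$ I must check that the \emph{entire} cell $C^{(j-1)}$, not merely $y_{j-1}$, is contained in $\mathcal{D}_R(r_j, t_j)$. This follows from the constancy of the right descent set on left $p$-cells: \cref{lemCleDesc} shows that $\desc{L}$ is constant on right $p$-cells, and combining \cref{corLeftRightEq} with $\desc{R}(w) = \desc{L}(w^{-1})$ transports this into the statement that $\desc{R}$ is constant on left $p$-cells. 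As membership in $\mathcal{D}_R(r_j, t_j)$ depends only on $\desc{R}(w) \cap \{r_j, t_j\}$, we conclude $C^{(j-1)} \subseteq \mathcal{D}_R(r_j, t_j)$. By \cref{propCellStar}(iii) the star-image $(C^{(j-1)})^{\ast}$ is again a left $p$-cell, and since it contains $y_{j-1}^{\ast} = y_j$ it must equal $C^{(j)}$; \cref{lemCellModStar} then yields an isomorphism of the associated left $p$-cell modules.

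Composing these isomorphisms along the chain produces an isomorphism between the left $p$-cell module of $C_1$ and that of $C_2$, which is the claim. I expect the only genuinely delicate point to be the constancy of the right descent set on left $p$-cells: this is what guarantees that \cref{lemCellModStar} is applicable cell-by-cell rather than only element-by-element, and without it the chain of star-operations could not be lifted to a chain of module isomorphisms. Everything else is bookkeeping built on the already-established dictionary between right star-operations, elementary Knuth moves, and the Robinson-Schensted $P$- and $Q$-symbols.
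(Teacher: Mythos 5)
Your proof is correct and takes essentially the same route as the paper: the paper obtains this corollary directly from \cref{lemCellModStar}, noting that "the proof works exactly as in characteristic $0$" (cf.\ \cite[Theorem 6.5.2]{BBCoxGrps}), i.e.\ by connecting Knuth-equivalent representatives of the two cells and composing the star-operation isomorphisms, which is exactly the chain argument you spell out. Your explicit verification that each entire cell $C^{(j-1)}$ (not just the element $y_{j-1}$) lies in $\mathcal{D}_R(r_j,t_j)$, via constancy of right descent sets on left $p$-cells, is the correct justification for applying \cref{lemCellModStar} and \cref{propCellStar}(iii) step by step.
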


We want to conclude with some interesting questions that merit further
study. We hope to come back to them in another paper:
\begin{itemize}
   \item In type $A$, the $p$-canonical basis for various primes $p$ gives a
         family of interesting bases of each Kazhdan-Lusztig cell module.
         Which bases of the corresponding irreducible representation of $S_n$
         do they specialize to? In \cite[\S1 and \S2.3]{WRedCharVarTypeA} Williamson
         explains that any $p$-canonical basis element of a right (or two-sided)
         cell module that differs from the corresponding Kazhdan-Lusztig basis element
         provides an example of a reducible characteristic variety of a simple
         highest weight module for $sl_{n}(\C)$.
   \item In Type $D$, the generalized $\tau$-invariant can be strengthened to give
         a complete invariant of Kazhdan-Lusztig cells. This is verified in 
         \cite{GaPrimIdealsIV} together with work of McGovern and Pietraho.
         Can one show that all elements of a left $p$-cell have the same strengthened 
         generalized $\tau$-invariant to get that Kazhdan-Lusztig cells decompose 
         into $p$-cells in type $D$?
   %\item Explore the relation between star-operations and the distinguished involutions.
   %\item Can we transfer results about fully commutative elements and star-reducible
   %      Coxeter groups to our setting?
   %\item Compare $p$-cells and (co)plectic classes!
\end{itemize}

\printbibliography

\Address

\end{document}